\DeclareMathOperator{\vol}{vol}
\newcommand{\Eu}{\mathbb{E}}
\newcommand{\BB}{\mathbf{B}}
\renewcommand{\P}{\mathcal{P}}
\renewcommand{\c}{\mathbf{c}}
\begin{document}

\newtheorem{theorem}{Theorem}[section]
\newtheorem{question}[theorem]{Question}
\newtheorem{example}[theorem]{Example}
\newtheorem{observation}[theorem]{Observation}
\newtheorem{remark}[theorem]{Remark}
\newtheorem{fact}{Fact}
\newtheorem{proc}[theorem]{Procedure}
\newtheorem{conjecture}[theorem]{Conjecture}
\newtheorem{lemma}[theorem]{Lemma}
\newtheorem{proposition}[theorem]{Proposition}
\newtheorem{corollary}[theorem]{Corollary}
\newtheorem{fac}[theorem]{Fact}
\newtheorem*{abst}{Abstract}
\newtheorem{definition}{Definition}
\newtheorem{problem}{Problem}
\newtheorem{algo}{Algorithm}

\title{Contact numbers for sphere packings} 

\author{K\'{a}roly Bezdek and Muhammad A. Khan}
\date{}
 \maketitle

\begin{abstract}
In discrete geometry, the contact number of a given finite number of non-overlapping spheres was introduced as a generalization of Newton's kissing number. This notion has not only led to interesting mathematics, but has also found applications in the science of self-assembling materials, such as colloidal matter. With geometers, chemists, physicists and materials scientists researching the topic, there is a need to inform on the state of the art of the contact number problem. In this paper, we investigate the problem in general and emphasize important special cases including contact numbers of minimally rigid and totally separable sphere packings. We also discuss the complexity of recognizing contact graphs in a fixed dimension. Moreover, we list some conjectures and open problems.           
\vspace{2mm}

\noindent \textit{Keywords and phrases:} sphere packings, kissing number, contact numbers, totally separable sphere packings, minimal rigidity, rigidity, Erd\H{o}s-type distance problems, colloidal matter. 

\vspace{2mm}

\noindent \textit{MSC (2010):} (Primary) 52C17, 52C15, (Secondary) 52C10. 
\end{abstract}

\section{Introduction}\label{intro}

The well-known {\it ``kissing number problem"} asks for the maximum number $k(d)$ of non-overlapping unit balls that can touch a unit ball in the $d$-dimensional Euclidean space $\mathbb{E}^{d}$. The problem originated in the 17th century from a disagreement between Newton and Gregory about how many 3-dimensional unit spheres without overlap could touch a given unit sphere. The former maintained that the answer was 12, while the latter thought it was 13. The question was finally settled many years later \cite{ScWa} when Newton was proved correct. The known values of $k(d)$ are $k(2)=6$ (trivial), $k(3)=12$ (\cite{ScWa}), $k(4)=24$ (\cite{Mu}), $k(8)=240$ (\cite{OdSl}), and $k(24)=196560$ (\cite{OdSl}). The problem of finding kissing numbers is closely connected to the more general problems of finding bounds for spherical codes and sphere packings. For old and new results on kissing numbers we refer the interested reader to the recent survey article \cite{BoDoMu}. In this paper, we focus on a more general relative of kissing number called contact number.

Let $\mathbf{B}^d$ be the $d$-dimensional unit ball centered at the origin ${\bf o}$ in $\mathbb{E}^{d}$. As is well known, a {\it finite packing} of unit balls in $\mathbb{E}^{d}$ is a finite family of non-overlapping translates of $\mathbf{B}^d$ in $\mathbb{E}^{d}$. Furthermore, the {\it contact graph} of a finite unit ball packing in $\mathbb{E}^{d}$ is the (simple) graph whose vertices correspond to the packing elements and whose two vertices are connected by an edge if and only if the corresponding two packing elements touch each other. The number of edges of a contact graph is called the {\it contact number} of the underlying unit ball packing. The {\it ``contact number problem"} asks for the largest contact number, that is, for the maximum number $c(n,d)$ of edges that a contact graph of $n$ non-overlapping translates of $\mathbf{B}^d$ can have in $\mathbb{E}^{d}$. 

The problem of determining $c(n,d)$ is equivalent to Erd\H{o}s's repeated shortest distance problem, which asks for the largest number of repeated shortest distances among $n$ points in ${\mathbb{E}}^{d}$. The planar case of this question was originally raised by Erd\H{o}s in 1946 \cite{ErDist}, with an answer conjectured by Reutter in 1972 and established by Harborth \cite{Ha} in 1974, whereas the problem in its more general forms was popularized by Erd\H{o}s and Ulam. Another way to look at the contact number problem is to think of it as the combinatorial analogue of the densest sphere packing problem, which dates back to the 17th century.

Let ${\bf K}$ be a convex body, i.e., a compact convex set with non-empty interior in $\mathbb{E}^{d}$. (If $d=2$, then  ${\bf K}$ is called a convex domain.) If ${\bf K}$ is symmetric about the origin ${\bf o}$ in $\mathbb{E}^{d}$, then one can regard ${\bf K}$ as the unit ball of a given norm in $\mathbb{R}^{d}$. In the same way as above one can talk about the largest contact number of packings by $n$ translates of ${\bf K}$ in $\mathbb{E}^{d}$ and label it by $c({\bf K},n,d)$. Here we survey the results on $c(n,d)$ as well as $c({\bf K},n,d)$.    

The notion of total separability was introduced in \cite{FTFT73} as follows: a packing of unit balls in $\mathbb{E}^{d}$ is called {\it totally separable} if any two unit balls can be separated by a hyperplane of $\mathbb{E}^{d}$ such that it is disjoint from the interior of each unit ball in the packing. Finding the densest totally separable unit ball packing is a difficult problem, which is solved only in dimensions two (\cite{FTFT73}, \cite{BA83}) and three (\cite{K88}). As a close combinatorial relative it is natural to investigate the maximum contact number $c_{\rm sep}(n,d)$ of totally separable packings of $n$ unit balls in $\mathbb{E}^{d}$. In what follows, we survey the results on $c_{\rm sep}(n,d)$ as well.
 
The paper is organized as follows. In Section \ref{materials}, we briefly discuss the importance of the contact number problem in materials science. The next two sections are devoted to the known bounds on the contact number for $d=2,3$. Section \ref{sec:empirical}, explores three computer-assisted empirical approaches that have been developed by applied scientists to estimate the contact numbers of packings of small number of unit spheres in ${\mathbb{E}}^{3}$. We analyze these approaches at length and show that despite being of interest, they fall short of providing exact values of largest contact numbers. In Section \ref{sec:polyomino}, we study contact numbers of unit sphere packings in ${\mathbb{E}}^{2}$ and ${\mathbb{E}}^{3}$ that live on the integer lattice or are totally separable. Section \ref{sec:d-dim} covers recent general results on packings of congruent balls and translates of an arbitrary convex body in $d$-space. It also includes results on the integer lattice and totally separable packings of $d$-dimensional unit balls. Finally, the last section deals with the state of the contact number problem for non-congruent sphere packings.

\section{Motivation from materials science}\label{materials}

In addition to finding its origins in the works of pioneers like Newton, Erd\H{o}s, Ulam and Fejes T\'{o}th (see Section \ref{sec:plane} for more on the role of latter two), the contact number problem is also important from an applications point of view. Packings of hard sticky spheres - impenetrable spheres with short-range attractive forces - provide excellent models for the formation of several real-life materials such as colloids, powders, gels and glasses \cite{Hay}. The particles in these materials can be thought of as hard spheres that self-assemble into small and large clusters due to their attractive forces. This process,  called \textit{self-assembly}, is of tremendous interest to materials scientists, chemists, statistical physicists and biologists alike. 

Of particular interest are \textit{colloids}, which consist of particles at micron scale, dispersed in a fluid and kept suspended by thermal interactions \cite{Ma}. Colloidal matter occurs abundantly around us - for example in glue, milk and paint. Moreover, controlled colloid formation is a fundamental tool used in scientific research to understand the phenomena of self-assembly and phase transition. 

From thermodynamical considerations it is clear that colloidal particles assemble so as to minimize the potential energy of the cluster. Since the range of attraction between these particles is extremely small compared to their sizes, two colloidal particles do not exert any force on each other until they are infinitessimally close, at which point their is strong attraction between them. As a result, they stick together, are resistant to drift apart, but strongly resistant to move any closer \cite{ArMaBr, Hay}. Thus two colloidal particles experiencing an attractive force from one another in a cluster can literally be thought of as being in contact. 

It can be shown that under the force law described above, the potential energy of a colloidal cluster at reasonably low temperatures is inversely proportional to the number of contacts between its particles \cite{ArMaBr, HC, Hoy}. Thus the particles are highly likely to assemble in packings that maximize the contact number. This has generated significant interest among materials scientists towards the contact number problem \cite{ArMaBr, Hoy} and has led to efforts in developing computer-assisted approaches to attack the problem. More details will appear in Section \ref{sec:empirical}.

\section{Largest contact numbers in the plane}\label{sec:plane}
\subsection{The Euclidean plane}
Harborth \cite{Ha} proved the following well-known result on the contact graphs of congruent circular disk packings in $\mathbb{E}^{2}$. 

\begin{theorem}\label{Harborth}
$c(n,2)=\lfloor 3n-\sqrt{12n-3}\rfloor$, for all $n\ge 2$.
\end{theorem}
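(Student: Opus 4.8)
The plan is to prove the two inequalities $c(n,2)\le\lfloor 3n-\sqrt{12n-3}\rfloor$ and $c(n,2)\ge\lfloor 3n-\sqrt{12n-3}\rfloor$ separately, after a normalization. Rescale so that the packing consists of unit disks whose centers $\p_1,\dots,\p_n$ are pairwise at distance at least $2$, and join $\p_i,\p_j$ by a straight segment exactly when $\|\p_i-\p_j\|=2$; the resulting drawing is the contact graph $G$. First I would record two structural facts to be used throughout. Two of these length-$2$ segments cannot cross (a crossing would force two of the disks to overlap, e.g.\ centers at $(\pm1,0)$ and $(0,\pm1)$ already violate the spacing), so $G$ is planar and this straight-line drawing realizes its planarity; and no disk is touched by more than $k(2)=6$ others, so $\Delta(G)\le 6$ and hence trivially $2c(n,2)\le 6n$. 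The entire difficulty is to sharpen the crude estimate $c\le 3n$ to the stated value.

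For the \emph{lower bound} I would exhibit an explicit packing. Take the triangular lattice and select the $n$ lattice points closest to a fixed center, breaking ties so that points are added in a spiral that completes one hexagonal ring before beginning the next. For the ``perfect'' sizes $n=3k^2+3k+1$ (a central point surrounded by $k$ full hexagonal rings) a direct count gives exactly $9k^2+3k$ contacts, and since $12n-3=(6k+3)^2$ one has $3n-\sqrt{12n-3}=9k^2+3k$ on the nose; this already attains the floor formula at these values and strongly suggests that the spiral is extremal in general. For intermediate $n$ I would track how the contact count increases as each new lattice point is appended, and verify by a short monotonicity computation that it keeps pace with $\lfloor 3n-\sqrt{12n-3}\rfloor$.

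For the \emph{upper bound} I would combine Euler's formula with a geometric area estimate. Assuming $G$ connected (a disconnected packing can only do worse, since $\sqrt{12n-3}$ is concave and splitting $n$ only increases $\sum_i\sqrt{12n_i-3}$), Euler gives $F_i=c-n+1$ bounded faces, and double counting edge--face incidences yields $2c\ge b+3F_i$, where $b$ is the number of edges on the outer boundary; this rearranges to $c\le 3n-3-b$. Thus everything reduces to a \emph{lower} bound on the length of the outer boundary. Here I would use that every bounded face is a polygon with sides of length $2$ whose vertices are pairwise at distance $\ge 2$, so its area is at least $\sqrt{3}$, the area of the equilateral triangle of side $2$; hence the region $R$ enclosed by the outer cycle satisfies $\area(R)\ge\sqrt{3}\,F_i$. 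Feeding $\area(R)$ and the boundary length $2b$ into an isoperimetric inequality forces $b$ to grow like $\sqrt{n}$, and combining this with $c\le 3n-3-b$ closes the loop.

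The hard part will be getting the \emph{sharp} constant. The classical circular isoperimetric inequality $\perim^2\ge 4\pi\,\area$ yields only $c\le 3n-\Theta(\sqrt{n})$ with a coefficient strictly below $2\sqrt3$, because the true extremal region is a hexagon rather than a disk. To recover the exact coefficient hidden in $\sqrt{12n-3}$ I would replace the circular bound by the matching discrete isoperimetric inequality for finite subsets of the triangular lattice, whose equality cases are precisely the hexagonal configurations; equivalently, one argues directly that the total degree deficiency $\sum_v\bigl(6-\deg(v)\bigr)=6n-2c$ is minimized along the hexagonal/spiral boundary. Carrying this through, together with the exact arithmetic needed to land on the floor $\lfloor\,\cdot\,\rfloor$ for every $n$ rather than merely asymptotically, is the delicate core of Harborth's theorem; the planarity and kissing-number inputs above are only the easy scaffolding.
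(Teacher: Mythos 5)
A preliminary remark on the comparison itself: this survey does not prove Theorem~\ref{Harborth} at all --- it states the result and cites Harborth \cite{Ha} --- so your proposal can only be measured against the known proofs in the literature, not against an argument contained in the paper. Your scaffolding is sound and matches the classical strategy: the hexagonal-spiral construction for the lower bound (your arithmetic at $n=3k^2+3k+1$, giving $9k^2+3k=3n-\sqrt{12n-3}$, is correct, and the increment-tracking for intermediate $n$ is the standard verification); and for the upper bound, non-crossing of contact segments, Euler's formula with the face count $F_i=c-n+1$, and the inequality $c\le 3n-3-b$, which correctly reduces everything to a sharp lower bound on the number $b$ of boundary edges. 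You are also right that the circular isoperimetric inequality cannot finish the job.

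The genuine gap is precisely at the step you label the ``delicate core'', and neither repair you propose can close it. First, a discrete isoperimetric inequality for finite subsets of the triangular lattice is inapplicable: the centers of an arbitrary unit-disk packing need not lie in any lattice, and the claim that non-lattice configurations never beat lattice ones is essentially Theorem~\ref{Harborth} itself, so invoking it begs the question. Second, your ``equivalent'' formulation --- that the deficiency $\sum_v\bigl(6-\deg(v)\bigr)=6n-2c$ is minimized by the hexagonal spiral --- is not a lemma but a verbatim restatement of the theorem. Moreover, the shortfall is more stubborn than your sketch suggests: even the sharp convex-position tool, the Oler--Groemer packing inequality (for a convex region $K$ containing $n$ points with pairwise distances at least $2$ one has $n\le \area(K)/\sqrt{12}+\perim(K)/4+1$, with equality exactly at hexagonal configurations), when combined in the obvious way with $c\le 3n-3-b$, the face-area bound and the isoperimetric inequality, still yields only $c\le 3n-\sqrt{2\pi\sqrt{3}}\,\sqrt{n}+O(1)\approx 3n-3.30\sqrt{n}$, short of $3n-\sqrt{12n-3}\approx 3n-3.46\sqrt{n}$; the obstruction is that a near-circular boundary of length $2b$ encloses more area (hence potentially more triangular faces) than a hexagonal one, and ruling this out requires showing that a region whose bounded faces are side-$2$ triangles cannot simultaneously be isoperimetrically efficient. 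Harborth's proof \cite{Ha}, and Brass's extension to normed planes (Theorem~\ref{Brass}, \cite{Br1}) which runs an induction over the boundary structure of the packing powered by an Oler-type inequality, supply exactly this missing sharp estimate together with the case analysis for sparse and non-convex configurations and the exact floor arithmetic. None of that is present, even in outline, in your proposal: what you have is a correct frame with the decisive estimate absent.
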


This result shows that an optimal way to pack $n$ congruent disks to maximize their contacts is to pack them in a `hexagonal arrangement'. The arrangement starts by packing 6 unit disks around a central disk in such a way that the centers of the surrounding disks form a regular hexagon. The pattern is then continued by packing hexagonal layers of disks around the first hexagon. Thus the hexagonal packing arrangement, which is known to be the densest congruent disk packing arrangement, also achieves the maximum contact number $c(n,2)$, for all $n$.    

Interestingly, this also means that $c(n,2)$ equals the maximum number of sides that can be shared between $n$ cells of a regular hexagon tiling of the plane. This connection was explored in \cite{HaHa}, where isoperimetric hexagonal lattice animals of a given area $n$ were explored. The connection between contact numbers and isoperimetric lattice animals is studied in detail in Section \ref{sec:polyomino}. So we skip the details here. 

Despite the existence of a simple formula for $c(n, 2)$, recognizing contact graphs of congruent disk packings is a challenging problem. The difficulty of this problem is made apparent by the following complexity result from \cite{BrKi}.

\begin{theorem}\label{2-hard}
The problem of recognizing contact graphs of unit disk packings is NP-hard.
\end{theorem}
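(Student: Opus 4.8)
The plan is to establish NP-hardness by a polynomial-time reduction from a planar constraint-satisfaction problem. Since every contact graph of a unit disk packing is a \emph{penny graph}—and hence planar with strong metric restrictions—it is natural to reduce from \textsc{Planar 3-SAT}, which is NP-complete by Lichtenstein's theorem. Given a planar 3-SAT formula $\varphi$ together with a planar embedding of its variable--clause incidence graph, I would construct in polynomial time a graph $G_\varphi$ with the property that $G_\varphi$ is realizable as the contact graph of a packing of congruent disks in $\mathbb{E}^2$ if and only if $\varphi$ is satisfiable.

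The construction proceeds through geometric gadgets built out of unit disks. The crucial structural observation is that when unit disks are packed so that their contact graph contains large triangulated blocks, the packing is essentially \emph{rigid}: up to a global isometry the positions of the disk centers are forced, because each triangle of pairwise-touching unit disks is an equilateral triangle of fixed side length. I would use such rigid triangulated blocks as \emph{frames} that fix the global coordinate system and transmit orientation across the construction. Between these rigid frames I would insert \emph{variable gadgets}, each a small cluster of disks admitting exactly two incongruent realizations consistent with the prescribed adjacencies; these two states encode the Boolean values \textsc{true} and \textsc{false}. \emph{Wire gadgets}, again made of near-rigid chains of disks, propagate the chosen state along the edges of the planar embedding, and \emph{clause gadgets} are designed so that the disks demanded by the graph can be placed without forcing a forbidden overlap precisely when at least one incident literal is set to \textsc{true}. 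Laying these gadgets out along the given planar embedding of $\varphi$ keeps the total number of disks polynomial in the size of $\varphi$, so $G_\varphi$ is produced in polynomial time.

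Correctness has two directions. For \emph{completeness}, given a satisfying assignment of $\varphi$ I would place each variable gadget in the state dictated by the assignment, realize every frame and wire in its forced rigid position, and verify that the clause gadgets can then be completed into a genuine non-overlapping packing whose contact graph is exactly $G_\varphi$; this is a direct, if tedious, coordinate computation. For \emph{soundness} I would argue conversely that any unit disk packing realizing $G_\varphi$ must respect the intended rigid skeleton, so that the only remaining freedom lies in the binary states of the variable gadgets, and that consistency along the wires forces these states to define a well-defined truth assignment satisfying every clause.

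The main obstacle is precisely this soundness direction. One must prove that the combinatorial adjacencies alone pin down the geometry tightly enough to exclude \emph{unintended} realizations—configurations that satisfy all the required contacts and non-contacts yet fold, shear, or reroute the gadgets in a way not anticipated by the encoding. Ruling these out requires a careful rigidity analysis: showing that the triangulated frames are globally rigid as unit disk packings, that wires cannot buckle to bypass a clause constraint, and that the packing condition (disks may touch but never overlap) propagates these local rigidities into a global one. Getting the quantitative geometry of the gadgets right—so that the permitted flex is exactly binary and the clause constraint is exactly ``at least one literal true''—is the technical heart of the proof.
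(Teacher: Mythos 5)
The paper itself contains no proof of this theorem: it is a survey, and Theorem~\ref{2-hard} is quoted directly from Breu and Kirkpatrick \cite{BrKi}. So the comparison here is against the published proof in \cite{BrKi}, and in that respect your overall architecture is the right one --- the known proof is indeed a polynomial-time reduction from a planar satisfiability variant, using rigid triangulated assemblies of unit disks as frames, flexible substructures as variables and wires, and clause gadgets whose realizability encodes ``at least one literal true.'' On the level of strategy, you have reconstructed the correct paradigm.

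Nevertheless, as a proof your proposal has a genuine gap, and you name it yourself: nothing is actually constructed. The variable gadget ``admitting exactly two incongruent realizations,'' the wire that ``propagates the chosen state,'' the clause gadget realizable ``precisely when at least one incident literal is true,'' and the global soundness claim that every realization of $G_\varphi$ respects the intended skeleton are all stipulated to exist rather than exhibited and verified. In reductions of this kind, the existence of such gadgets \emph{is} the theorem; the hard work is (i) proving that the triangulated frames admit a unique realization up to isometry as penny graphs (adjacency alone does not give rigidity for free --- non-edges only force center distance strictly greater than $2$, and one must rule out folded or reflected embeddings of large blocks), and (ii) proving that no unintended global configuration --- a rerouted wire, a rotated frame, a clause satisfied ``geometrically'' without any true literal --- realizes all prescribed contacts and non-contacts. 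Your last paragraph correctly identifies this as the technical heart of the proof, but identifying the obstacle is not the same as overcoming it; until the gadgets are specified by explicit coordinates (or combinatorial descriptions) and both directions of the equivalence are verified against them, the argument establishes nothing beyond the plausibility of the approach.
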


Quite surprisingly, the following rather natural stability version of Theorem~\ref{Harborth} is still an open problem. (See also the final remarks in \cite{Br1}.) 

\begin{conjecture}
There exists an $\epsilon>0$ such that for any packing of $n$ circular disks of radii chosen from the interval $[1-\epsilon, 1]$ the number of touching pairs in the packing is at most $\lfloor 3n-\sqrt{12n-3}\rfloor$, for all $n\ge 2$.
\end{conjecture}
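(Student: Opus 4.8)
The plan is to split the difficulty into two layers: a \emph{fixed-$n$} statement, which is comparatively soft, and the \emph{uniformity in $n$}, which is the genuine obstacle and the reason the problem remains open. For a fixed $n$ I would argue by compactness. Suppose the bound fails for every $\epsilon>0$; then for each $k$ there is a packing $P_k$ of $n$ disks with radii in $[1-1/k,1]$ and strictly more than $\lfloor 3n-\sqrt{12n-3}\rfloor$ touching pairs. Since merging two components by sliding one until it touches the other strictly increases the contact count, we may take each $P_k$ connected, so after a translation its centres lie within distance $2(n-1)$ of a fixed disk. Passing to a subsequence, the centres and radii converge; all radii tend to $1$. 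Non-overlap is a closed condition, and each contact equality $|p_i-p_j|=r_i+r_j$ passes to the limit as an equality, so \emph{every} contact of $P_k$ survives. The limit is thus a packing of $n$ unit disks with more than $\lfloor 3n-\sqrt{12n-3}\rfloor$ contacts, contradicting Theorem~\ref{Harborth}. This yields, for each $n$, some threshold $\epsilon_n>0$.

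The catch is that this argument is non-uniform: it produces $\epsilon_n$ depending on $n$, whereas the conjecture demands a single $\epsilon>0$ valid for all $n$. Proving $\inf_n\epsilon_n>0$ is the whole point. To attack it directly I would run a stable version of the proof of Theorem~\ref{Harborth}. Two ingredients transfer cleanly to radii in $[1-\epsilon,1]$. First, the contact graph $G$ is planar (segments between centres of touching disks cannot cross), so for a connected contact graph on $n\ge 3$ vertices with $b$ boundary edges Euler's formula gives $e\le 3n-3-b$, with equality only when every bounded face is a triangle; this is purely combinatorial and indifferent to the radii. Second, the maximum degree stays $\le 6$ for small $\epsilon$: two consecutive disks of radius $\ge 1-\epsilon$ touching a disk of radius $\le 1$ subtend at its centre an angle at least $2\arcsin\frac{1-\epsilon}{2-\epsilon}$, which equals $60^\circ$ at $\epsilon=0$ and therefore exceeds $360^\circ/7$ once $\epsilon$ is small, so a seventh neighbour cannot fit.

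Everything then reduces to an \emph{isoperimetric lower bound on the boundary length $b$ that is uniform in $n$ and stable under radius perturbation}. In the unit case one bounds the area enclosed by the outer boundary below by (number of triangular faces)$\times\sqrt3$ and combines this with a packing-density estimate to force $b\ge\sqrt{12n-3}-3$; substituting into $e\le 3n-3-b$ recovers Harborth's bound. The hard part lives here. When the radii drop below $1$, each triangle of three mutually touching disks shrinks, so the area-per-triangle guarantee degrades from $\sqrt3$ to $\sqrt3-O(\epsilon)$. For fixed $n$ this loss is negligible, but a priori it could accumulate over the $\Theta(n)$ faces and, for large $n$, permit a boundary shorter than $\sqrt{12n-3}$ — that is, it could let a cleverly perturbed packing buy extra contacts. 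A proof must therefore supply a perturbation-robust isoperimetric inequality showing that the $O(\epsilon)$-per-triangle deficit cannot be leveraged into a genuinely shorter boundary, uniformly in $n$; equivalently, that the hexagonal arrangement remains the unique asymptotic optimiser under small radius perturbations.

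Concretely I would pursue this through a discharging argument: assign to each disk a local charge measuring its contact deficit relative to the degree-$6$ hexagonal ideal together with the angular defects of the triangles around it, then redistribute charges to boundary disks and sum. Such local averaging schemes are automatically uniform in $n$, which is exactly the property the compactness approach lacks. The difficulty — and what I expect to be the main obstacle — is calibrating the charges so that the $\epsilon$-perturbations of the ideal $60^\circ$ angles and the ideal edge length $2$ are absorbed into the local accounting without corrupting the global count; controlling this calibration uniformly, so that no accumulation of small defects can defeat the floor in $\lfloor 3n-\sqrt{12n-3}\rfloor$, is precisely where the conjecture has so far resisted proof.
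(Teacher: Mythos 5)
The statement you are trying to prove is not a theorem of the paper at all: it appears there as an open conjecture (a stability version of Theorem~\ref{Harborth}, with a pointer to the final remarks of \cite{Br1}), and the paper supplies no proof. Your proposal, as you yourself concede, does not close it either. What you do establish is the fixed-$n$ statement, and that part of your argument is sound: a counterexample sequence may be assumed connected (merging components only adds contacts), its centres then lie in a compact set, along a subsequence the contact graph is constant and centres and radii converge, non-overlap and tangency are closed conditions, and the limiting configuration would be a packing of $n$ unit disks violating Theorem~\ref{Harborth}. Likewise your degree estimate, that consecutive neighbours subtend an angle at least $2\arcsin\frac{1-\epsilon}{2-\epsilon} > 2\pi/7$ at the centre for small $\epsilon$, so the maximum degree stays at most $6$, is correct and perturbation-stable.

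The genuine gap is exactly where you place it, and it is the whole conjecture: compactness yields an $\epsilon_n>0$ for each $n$, but nothing in your proposal shows $\inf_n \epsilon_n>0$. The discharging/stable-isoperimetry program of your final two paragraphs is a blueprint, not an argument — you never produce a perturbation-robust lower bound on the boundary length $b$ (or any calibrated local charge scheme) that is uniform in $n$, and you correctly observe that the $O(\epsilon)$ per-triangle area deficit could a priori accumulate over $\Theta(n)$ faces and defeat the floor in $\lfloor 3n-\sqrt{12n-3}\rfloor$. So the proposal should be read as a correct reduction of the problem to a uniform stability inequality, together with a partial (fixed-$n$) result; it is not a proof, which is consistent with the paper's classification of the statement as open.
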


In 1984, Ulam (\cite{Er}) proposed to investigate Erd\H os-type distance problems in normed spaces. Pursuing this idea, Brass \cite{Br1} proved the following extension of Theorem~\ref{Harborth} to normed planes.

\begin{theorem}\label{Brass}
Let ${\bf K}$ be a convex domain different from a parallelogram in $\mathbb{E}^{2}$. Then for all $n\geq 2$, one has $c({\bf K},n,2)=\lfloor 3n-\sqrt{12n-3}\rfloor$. If ${\bf K}$ is a parallelogram, then $c({\bf K},n,2)=\lfloor 4n-\sqrt{28n-12}\rfloor$ holds for all $n\ge 2$. 
\end{theorem}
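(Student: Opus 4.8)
The plan is to reformulate the problem as a repeated–minimum–distance problem and then run an Euler-type count reinforced by a discrete isoperimetric inequality, tracing the entire parallelogram/non-parallelogram dichotomy back to a single local fact about crossing shortest segments. For the reduction, note that since $\mathbf{K}=-\mathbf{K}$, two translates $\mathbf{K}+x_i$ and $\mathbf{K}+x_j$ are non-overlapping exactly when $\|x_i-x_j\|_{\mathbf{K}}\geq 2$ and touch exactly when $\|x_i-x_j\|_{\mathbf{K}}=2$, where $\|\cdot\|_{\mathbf{K}}$ is the norm with unit ball $\mathbf{K}$. Hence $c(\mathbf{K},n,2)$ equals the maximum number of pairs realizing the minimum distance among $n$ points of the normed plane $(\mathbb{R}^2,\|\cdot\|_{\mathbf{K}})$; after rescaling we may take the minimum distance to be $1$. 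Let $G$ be the graph on these $n$ points whose edges are the realized minimum-distance pairs, drawn with straight segments, and let $e$ be its number of edges. Both target quantities are affine invariants, and ``being a parallelogram'' is itself affine-invariant, which is what makes the classification clean; moreover the first formula is exactly the Euclidean value $c(n,2)$ of Theorem~\ref{Harborth}.

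The next step, which I expect to carry the whole dichotomy, is a local analysis. The key lemma to prove is that two minimum-distance segments of $G$ can cross only if $\mathbf{K}$ is a parallelogram. Consequently, for every non-parallelogram $\mathbf{K}$ the graph $G$ is planar, whereas for a parallelogram (affinely a square) the shortest contacts necessarily include crossing ``diagonal'' pairs. Combined with the bound that the number of neighbours of a vertex at the minimum distance is at most the translative kissing number of $\mathbf{K}$ — which is $6$ for every planar convex body other than a parallelogram and $8$ for a parallelogram — this pins down the combinatorics: in the non-parallelogram case $G$ is a planar graph of maximum degree $6$, and in the parallelogram case it is (an affine image of) a subgraph of the king lattice on $\mathbb{Z}^2$ of maximum degree $8$.

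For the global count in the non-parallelogram case I would apply Euler's formula to the planar graph $G$. Writing $p$ for the number of edges on the boundary of the region tiled by the bounded faces of $G$ and using that every bounded face has at least three sides, a standard edge–face double count gives $e\leq 3n-p-3$, with equality precisely when all bounded faces are triangles. It then remains to bound $p$ from below: a discrete isoperimetric inequality for triangulated disks of maximum degree $6$ — whose extremal configuration is the hexagonal patch of the triangular lattice — yields exactly $e\leq\lfloor 3n-\sqrt{12n-3}\rfloor$. The matching construction transfers to an arbitrary non-parallelogram norm because every centrally symmetric convex domain admits an inscribed affine-regular hexagon, which supplies three contact directions whose lattice packing makes each body touch six others in a triangular pattern. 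In the parallelogram case the same scheme is run on the king lattice: the faces are rhombi carrying two crossing diagonals, the analogous count produces the leading term $4n$, and the corresponding isoperimetric optimization — with extremal shape a near-regular octagonal patch of $\mathbb{Z}^2$ — gives $e\leq\lfloor 4n-\sqrt{28n-12}\rfloor$, again with a matching lattice construction.

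The main obstacles are two. The first is the local crossing lemma: proving that crossing shortest segments force $\mathbf{K}$ to be a parallelogram (equivalently, establishing the sharp translative kissing dichotomy $6$ versus $8$) is where the special geometry of the norm enters, and it is what makes the two regimes mutually exclusive. The second is establishing the discrete isoperimetric inequalities with the exact constants $\sqrt{12n-3}$ and $\sqrt{28n-12}$ and identifying their hexagonal and octagonal extremizers; this part is purely combinatorial once the degree and planarity structure is in hand, which is precisely the reason the bound for non-parallelogram norms coincides with the Euclidean value of Theorem~\ref{Harborth}.
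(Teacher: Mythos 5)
A preliminary remark: this survey does not prove Theorem~\ref{Brass} at all; the theorem is quoted from Brass \cite{Br1}, so your outline can only be measured against Brass's original argument. At the level of strategy you do follow the known route: reformulation as a repeated-minimum-distance problem in a normed plane, a crossing lemma separating the parallelogram from the non-parallelogram case, the translative kissing-number dichotomy $6$ versus $8$, and a Harborth-style Euler/isoperimetric count with hexagonal (resp.\ octagonal) extremal patches. These ingredients are all true, and the crossing lemma you isolate is indeed provable: a proper crossing of two minimum-distance segments forces equality in four triangle inequalities, and the equality case of the triangle inequality in a normed plane forces the two segment directions to lie on two distinct boundary edges of the unit ball meeting at a common vertex and containing an antipodal pair of points, which makes the unit ball a parallelogram.

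There are, however, two genuine gaps. First, you assume $\mathbf{K}=-\mathbf{K}$ at the outset, but the theorem concerns arbitrary convex domains; a triangle is covered by the statement and is not centrally symmetric. The missing (standard) step is to pass to the difference body $\mathbf{K_o}=\frac{1}{2}(\mathbf{K}+(-\mathbf{K}))$ --- the body this paper introduces before Theorem~\ref{15} --- using that translates of $\mathbf{K}$ are non-overlapping (touching) if and only if the corresponding translates of $\mathbf{K_o}$ are, and that $\mathbf{K_o}$ is a parallelogram if and only if $\mathbf{K}$ is; without this the asymmetric case is simply not addressed. Second, the parallelogram case does not reduce to the king lattice as you claim: a minimum-distance configuration in the $\ell_\infty$ norm need not be an affine image of a set in $\mathbb{Z}^2$ whose contacts are king moves. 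For instance, the path $(0,0),(1,0.3),(2,-0.2),(3,0.4)$ realizes minimum distance $1$ with contact vectors $(1,0.3),(1,-0.5),(1,0.6)$, and no lattice basis $p,q$ places all three in $\{\pm p,\pm q,\pm(p+q),\pm(p-q)\}$. Moreover the king graph is non-planar, so ``running the same scheme'' is not available: Euler's formula cannot be applied to $G$ once crossings are admitted, and a separate argument is needed (e.g., apply the planar count to the subgraph of non-crossing edges and bound the number of crossing diagonals each quadrilateral face can carry). Finally, both of your declared ``main obstacles'' --- the crossing lemma and the exact isoperimetric inequalities with constants $\sqrt{12n-3}$ and $\sqrt{28n-12}$ --- are deferred rather than proved, and the latter essentially \emph{is} Harborth's theorem, i.e.\ the core of the result; so as written this is a correct plan, not a proof.
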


The same idea inspired the first named author to investigate this question in $d$-space, details of which appear in Section \ref{sec:d-dim}. 

Returning to normed planes, the following is a natural question.

\begin{problem}
Find an analogue of Theorem~\ref{Brass} for totally separable translative packings of convex domains in $\mathbb{E}^{2}$.
\end{problem}

\subsection{Spherical and hyperbolic planes}

An analogue of Harborth's theorem in the hyperbolic plane $\mathbb{H}^{2}$ was found by Bowen in \cite{Bo}. In fact, his method extends to the $2$-dimensional spherical plane $\mathbb{S}^{2}$. We prefer to quote these results as follows.

\begin{theorem}
Consider disk packings in $\mathbb{H}^{2}$ (resp., $\mathbb{S}^{2}$) by finitely many congruent disks, which maximize the number of touching pairs for the given number of congruent disks and of given diameter $D$. Then such a packing must have all of its centers located on the vertices of a triangulation of  $\mathbb{H}^{2}$ (resp., $\mathbb{S}^{2}$) by congruent equilateral triangles of side length $D$ provided that the equilateral triangle in $\mathbb{H}^{2}$ (resp., $\mathbb{S}^{2}$) of side length $D$ has each of its angles equal to $\frac{2\pi}{N}$ for some positive integer $N\geq 3$.
\end{theorem}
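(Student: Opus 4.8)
The plan is to combine a local kissing bound with a global Euler–Gauss–Bonnet count, and then to force the extremal structure by a compression/exchange argument. Write $\theta$ for the common angle of the equilateral triangle of side $D$, so that the hypothesis reads $\theta = \tfrac{2\pi}{N}$. The first step is purely local: if one disk of the packing is touched by several others, then the centers of the touching disks all lie at distance exactly $D$ from the central center, while any two of them are at distance at least $D$ (non-overlap). Hence, seen from the central point, two consecutive neighbouring centers subtend an angle at least $\theta$, the minimum $\theta$ being attained precisely when the three centers form an equilateral triangle of side $D$. Since the angles around a point sum to $2\pi$, every vertex of the contact graph has degree at most $2\pi/\theta = N$, and a degree-$N$ vertex is \emph{rigid}: its $N$ neighbours are forced into the arrangement in which consecutive ones subtend exactly $\theta$. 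As a sanity check, $\theta<\pi/3$ in $\mathbb{H}^{2}$ gives $N\ge 7$, $\theta>\pi/3$ in $\mathbb{S}^{2}$ gives $N\in\{3,4,5\}$ (the tetra-, octa- and icosahedral triangulations), and $N=6$, $\theta=\pi/3$ recovers the Euclidean case of Harborth's theorem.

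Next I would embed the contact graph $G$ by drawing each contact as a geodesic segment of length $D$; non-overlap prevents distinct segments from crossing, so $G$ is a plane (respectively spherical) graph all of whose drawn edges have length $D$. I would then run Euler's formula together with a Gauss–Bonnet accounting of the face angles. A bounded face is a geodesic polygon whose sides all equal $D$; in particular a triangular face is \emph{forced} to be the equilateral triangle of side $D$ with all three angles $\theta$, since such a triangle is unique by SSS. Summing the interior angles face by face, comparing with the $2\pi$ available around each interior vertex, and using that every bounded face has at least three sides, yields an upper bound for the number of edges $e$ in terms of $n$; equality in this bound forces simultaneously that every bounded face is a triangle and that every interior vertex has full coordination number $N$, filling the full angle $2\pi = N\theta$.

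The third and decisive step is to show that a contact-maximizing packing actually attains this equality, and this is where I expect the main difficulty. Suppose a maximizer had an interior vertex with an angular gap exceeding $\theta$, or a bounded face with more than three sides. The goal is an \textbf{exchange argument}: a local rotation or translation of one disk into the slack region should create a new contact without breaking an existing one, contradicting maximality. Making this rigorous — controlling the simultaneous motion of several disks, ruling out that closing one gap opens another, and treating degenerate or disconnected configurations — is the delicate part, and it is precisely here that the hypothesis $\theta=\tfrac{2\pi}{N}$ is indispensable: it is exactly the condition under which $N$ equilateral triangles of side $D$ close up perfectly around a vertex with no residual gap, so that a gap-free, fully-triangulated local picture is geometrically possible. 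For a generic side length $D$ the angle $\theta$ would not divide $2\pi$ evenly, a positive angular gap at each vertex would be unavoidable, and the triangulation simply could not exist.

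Finally, once the gap-free, all-triangular local structure is established at every interior vertex, the equilateral faces glue along their length-$D$ edges into a connected patch that, by the full-coordination condition at interior vertices, coincides with a portion of the triangulation of $\mathbb{H}^{2}$ (respectively $\mathbb{S}^{2}$) by congruent equilateral triangles of side $D$; all the centers then sit at its vertices, as claimed. Two loose ends remain to be checked: the boundary vertices of $G$, which carry no closure requirement and are handled by noting that the extremal count only constrains interior faces and coordination; and, in the spherical case, the genuine existence of the global triangulation, which is again guaranteed exactly by $\theta=\tfrac{2\pi}{N}$ for the admissible values $N\in\{3,4,5\}$.
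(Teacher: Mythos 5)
The first thing to note is that the paper itself contains no proof of this statement: it is a survey, and the theorem is quoted from Bowen \cite{Bo} (whose hyperbolic argument, the authors remark, extends to $\mathbb{S}^{2}$). So your proposal can only be judged against what a complete proof must contain. Measured that way, it has a genuine gap. Your Steps 1, 2 and 4 are correct in outline but are the routine part: the angle bound $\deg \leq 2\pi/\theta = N$, the crossing-free geodesic drawing, and the final gluing of equilateral faces. The entire mathematical content of the theorem sits in your Step 3, which you explicitly leave open (``making this rigorous \dots is the delicate part''). A proof proposal whose decisive step is an acknowledged placeholder is not a proof.

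Worse, the route you sketch for Step 3 would fail as stated, for three concrete reasons. First, a contact-maximizing packing does \emph{not} attain equality in an Euler--Gauss--Bonnet bound of the type you describe: boundary vertices never achieve full coordination, which is why already in the Euclidean case the maximum is $\lfloor 3n-\sqrt{12n-3}\rfloor$, strictly below the Euler-type ceiling $3n-6$ for all $n\geq 4$. So ``maximizer $\Rightarrow$ equality'' is simply false; what is needed instead is a sharp bound containing a boundary (isoperimetric) term, and producing that term is precisely the hard part of Harborth's and Bowen's arguments --- all the more delicate in $\mathbb{H}^{2}$, where exponential growth forces the boundary to contain a definite fraction of all vertices, and where the Gauss--Bonnet accounting carries area terms of opposite signs in the two geometries, which your sketch never confronts. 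Second, the local exchange is not generally available: an angular gap exceeding $\theta$ at a vertex does not imply that some disk can be moved into the slack to gain a contact without first losing one, since the disks bounding the gap may be pinned by contacts on their other sides; maximality only forbids configurations with strictly more contacts, so you must actually exhibit an improving configuration, possibly via a global rearrangement. Third, even if you had established that every bounded face is an equilateral triangle of side $D$ and every interior vertex is fully coordinated, the conclusion still does not follow: two fully triangulated patches sharing a single vertex (an articulation point of the contact graph) can be hinged at an angle that is not a multiple of $\theta$; such a configuration satisfies all your local equality conditions, yet its centers lie on no common triangulation. Your closing remark that boundary vertices ``carry no closure requirement'' concedes exactly this: the theorem's content is that the lattice structure is enforced \emph{everywhere}, including across boundary and articulation vertices, and nothing in your argument does that.
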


In 1984, L. Fejes T\' oth (\cite{BeCoKe}) raised the following attractive and related problem in $\mathbb{S}^{2}$: Consider an arbitrary packing ${\mathcal P}_r$ of disks of radius $r>0$ in $\mathbb{S}^{2}$. Let ${\rm deg}_{\rm avr}({\mathcal P}_r)$ denote the average degree of the vertices of the contact graph of ${\mathcal P}_r$. Then prove or disprove that $\limsup_{r\to 0}\left(\sup_{{\mathcal P}_r} {\rm deg}_{\rm avr}({\mathcal P}_r)\right)<5$. This problem was settled in \cite{BeCoKe}.

\begin{theorem}\label{Bezdek-Connelly-Kertesz}
Let ${\mathcal P}_r$ be an arbitrary packing of disks of radius $r>0$ in $\mathbb{S}^{2}$. Then 
$$\limsup_{r\to 0}\left(\sup_{{\mathcal P}_r} {\rm deg}_{\rm avr}({\mathcal P}_r)\right)<5.$$
\end{theorem}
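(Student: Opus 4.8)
The plan is to treat the contact graph $G$ of $\mathcal{P}_r$ as a geodesic graph embedded in $\mathbb{S}^2$: its vertices are the disk centres, and two are joined by the minor great-circle arc whenever the disks touch. Since the disks are non-overlapping, these arcs do not cross, so $G$ is a planar graph on the sphere and Euler's formula $V-E+F=2$ applies (handling each component, or passing to a connected augmentation). The first ingredient is a local angle estimate. If two disks of radius $r$ touch, their centres are at spherical distance $2r$; and if three pairwise-touching disks form a triangular face, that face is an equilateral spherical triangle of side $2r$, whose angle $\alpha(r)$ satisfies $\cos\alpha(r)=\frac{\cos 2r}{1+\cos 2r}$. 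Hence $\alpha(r)>\pi/3$ for every $r>0$, with the expansion $\alpha(r)=\frac{\pi}{3}+\frac{r^{2}}{\sqrt 3}+O(r^{4})$. Because non-overlap forces the angle between any two consecutive contact arcs at a vertex to be at least $\alpha(r)$, the $\deg(v)$ angles around $v$ sum to $2\pi\ge \deg(v)\,\alpha(r)$, so $\deg(v)<2\pi/\alpha(r)<6$ and therefore $\deg(v)\le 5$. Summing gives $2E\le 5V$, i.e. ${\rm deg}_{\rm avr}(\mathcal{P}_r)=2E/V\le 5$ for free; the whole difficulty is to gain the strict gap below $5$ in the limit.

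To isolate that difficulty I would rewrite the average degree through Euler's formula. Writing $n_f$ for the number of sides of a face $f$ and $X:=\sum_f(n_f-3)\ge 0$ for the total ``non-triangular excess'', the relations $2E=\sum_f n_f$ and $V-E+F=2$ give the identity \[ {\rm deg}_{\rm avr}(\mathcal{P}_r)=\frac{2E}{V}=6-\frac{12+2X}{V}. \] Thus ${\rm deg}_{\rm avr}\le 5-2\delta$ is \emph{equivalent} to $X\ge(\tfrac12+\delta)V-6$, so the goal reduces to showing that the faces of $G$ carry non-triangular excess at least a definite fraction, strictly more than $\tfrac12$, of the number of vertices. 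The constraint $\deg\le 5$ alone yields only $X\ge (V-12)/2$ (equality forcing an all-triangular, all-degree-$5$ triangulation, which by the relation $\sum_v(6-\deg v)=12+2X$ can only be the icosahedron on $12$ vertices). The icosahedral packing, where ${\rm deg}_{\rm avr}=5$ exactly, occurs at a fixed radius $r\approx 0.55$ and shows why the bound cannot be strict for all $r$; the content of the theorem is that for small $r$ this extremal combinatorics is geometrically unsustainable.

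The crux, and the step I expect to be hardest, is a quantitative curvature-plus-packing estimate forcing the extra excess $\delta V$. Two geometric facts drive it. First, every vertex of degree $5$ carries a positive angular slack $2\pi-5\alpha(r)=\tfrac{\pi}{3}-\tfrac{5r^{2}}{\sqrt3}+O(r^4)$, which must be absorbed by the non-triangular faces incident to it; hence a vertex cannot be surrounded solely by triangles. Second -- and this is what I would exploit to beat the naive bound -- in any face the disks at non-adjacent corners are also non-overlapping, so \emph{every diagonal of a face has length $\ge 2r$}; a quadrilateral face therefore has both diagonals $\ge 2r$, forcing its vertex angles into roughly $[\tfrac{\pi}{3},\tfrac{2\pi}{3}]$ and its area to exceed that of two tight triangles, and an $n$-gon face has area bounded below accordingly. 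Aggregating these local deficits against the fixed total area $4\pi$ and the Gauss--Bonnet identity $\sum_f A_f=4\pi$, together with the sharp packing bound $V\le \tfrac{2\pi}{\sqrt3\,r^{2}}(1+o(1))$, should show that the angular slack accumulated over the $\approx V$ degree-$5$ vertices cannot be accommodated without creating non-triangular faces whose excess totals $(\tfrac12+\delta)V$. Making this accounting rigorous is delicate: one must (i) turn the per-vertex slack into a per-face area or angle deficit without double counting, since a single large face may serve many incident degree-$5$ vertices; (ii) control faces with many sides, where the diagonal estimate must be iterated; and (iii) keep every estimate uniform over all packings $\mathcal{P}_r$ so that the bound survives both the supremum over packings and the passage $r\to0$. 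Once a uniform $\delta>0$ with $X\ge(\tfrac12+\delta)V-6$ is secured for all sufficiently small $r$, the displayed identity yields ${\rm deg}_{\rm avr}(\mathcal{P}_r)\le 5-2\delta+o(1)$, and hence $\limsup_{r\to0}\sup_{\mathcal{P}_r}{\rm deg}_{\rm avr}(\mathcal{P}_r)<5$.
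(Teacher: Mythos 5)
Your proposal does not match anything in the paper for a simple reason: the paper is a survey and states this theorem without proof, citing the original 1987 paper of Bezdek, Connelly and Kert\'esz \cite{BeCoKe}. So your argument has to stand on its own, and as written it does not. What you establish is correct and well set up: the contact graph embeds as a geodesic planar graph; the angle of an equilateral spherical triangle of side $2r$ satisfies $\cos\alpha(r)=\frac{\cos 2r}{1+\cos 2r}$, so $\alpha(r)>\pi/3$; consecutive edges at a vertex subtend an angle at least $\alpha(r)$, so every degree is at most $5$; and the Euler identity $\mathrm{deg}_{\mathrm{avr}}=6-\frac{12+2X}{V}$ correctly converts the theorem into the claim that $X\ge\left(\tfrac12+\delta\right)V-6$ for a uniform $\delta>0$ and all small $r$. (One side remark is wrong: equality in $X\ge (V-12)/2$ forces $5$-regularity, not an all-triangular icosahedron; the snub cube graph, $V=24$, $X=6$, also attains equality.) But the excess bound itself -- which is the entire content of the theorem, everything before it being elementary -- is never proved; you explicitly defer it, listing difficulties (i)--(iii) without resolving them.

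Moreover, the tools you propose cannot deliver that bound in the form you state them, so this is a genuine missing idea rather than a routine verification. First, summing Gauss--Bonnet over faces against the vertex angle sums $2\pi$ and the total area $4\pi$ is exactly Euler's formula; it is an identity and produces no inequality. Second, the fact that no vertex can be surrounded by triangles gives, via the incidence count $\sum_{n_f\ge 4} n_f\ge V$ and $n_f\le 4(n_f-3)$, only $X\ge V/4$, i.e.\ $\mathrm{deg}_{\mathrm{avr}}\le 5.5+o(1)$ -- far short of $<5$. Third, your diagonal estimate does not close this factor-of-two gap: a rhombus face with angles near $2\pi/3,\pi/3,2\pi/3,\pi/3$ satisfies the diagonal constraint, and a vertex of type $3.3.3.3.4$ (four triangles plus one such rhombus corner of angle $2\pi-4\alpha(r)$) satisfies every local constraint you list while meeting only one non-triangular corner; so purely local accounting saturates near $X\approx V/4$. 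What kills such configurations is global: for instance, a packing all of whose vertices are of type $3.3.3.3.4$ must combinatorially be the snub cube, whose rhombi would need angle sum about $8\pi/3$, exceeding the spherical bound $2\pi+A_f$ -- a contradiction; but turning this into a uniform-in-$r$, uniform-in-packing bound on how much defect every large packing must carry (and handling pentagonal and larger faces, where the corners adjacent to wide angles propagate constraints to further vertices) is precisely the missing step, and it is the substance of the cited Bezdek--Connelly--Kert\'esz paper. As it stands, your argument proves only $\mathrm{deg}_{\mathrm{avr}}\le 5$, not the strict limsup inequality.
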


We conclude this section with the still open hyperbolic analogue of Theorem~\ref{Bezdek-Connelly-Kertesz} which was raised in \cite{BeCoKe}.

\begin{conjecture}\label{B-C-K}
Let ${\mathcal P}_r$ be an arbitrary packing ${\mathcal P}_r$ of disks of radius $r>0$ in $\mathbb{H}^{2}$. Then 
$$\limsup_{r\to 0}\left(\sup_{{\mathcal P}_r} {\rm deg}_{\rm avr}({\mathcal P}_r)\right)<5.$$
\end{conjecture}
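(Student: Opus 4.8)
The natural plan is to transplant the proof of Theorem~\ref{Bezdek-Connelly-Kertesz} to $\mathbb{H}^2$, isolating precisely where the sign of the curvature changes the picture. On $\mathbb{S}^2$ the decisive local fact is that an equilateral triangle of side $2r$ has each angle strictly larger than $\pi/3$; hence at most five touching neighbours fit around any disk (so every vertex has degree at most five), every contact triangle is forced to be this equilateral triangle, and the resulting angular frustration (five such triangles leave an uncovered sector of about $\pi/3$) keeps the contact graph quantitatively far from a tight triangulation. Feeding this into Euler's formula then pushes the average degree below $5$ as $r\to 0$. In $\mathbb{H}^2$ the sign flips: the equilateral triangle of side $2r$ has angles $\beta(r)<\pi/3$, so a disk may now carry six touching neighbours and the per-vertex frustration shrinks to order $\pi-3\beta(r)\asymp r^2$ as $r\to 0$. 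Reproducing the spherical conclusion therefore cannot rest on a pointwise degree bound and must extract a global effect from the negative curvature.

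Concretely, I would first record the bookkeeping. Writing the (connected) contact graph with $V$ vertices, $E$ edges and $F$ faces, Euler's formula gives $E=V+F-2$, so that ${\rm deg}_{\rm avr}=2E/V<5$ is equivalent to $F<\tfrac32 V+2$; that is, it suffices to show that the contact graph is quantitatively far from a triangulation. Next I would carry out the angle analysis at each interior disk: the angles of the incident faces sum to $2\pi$, each is at least $\beta(r)$, and equality in a sector forces that sector's face to be the equilateral triangle of side $2r$. Since $2\pi/\beta(r)$ lies strictly between $6$ and $7$ for all sufficiently small $r$ (in particular it is not an integer), no disk can be surrounded entirely by triangular faces: each interior disk must be incident to at least one face with at least four sides, carrying an uncovered sector of angle $2\pi-5\beta(r)\to \pi/3$. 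Using the identity $6V-2E=2\sum_f(k_f-3)+12$, where $k_f$ is the number of sides of the face $f$, the target ${\rm deg}_{\rm avr}<5$ becomes the assertion that the total face excess $\sum_f(k_f-3)$ exceeds $\tfrac12 V$ (up to the additive constant coming from Euler's formula).

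The crux — and the reason the problem is open — is the aggregation step. A single forced gap sector is compatible with the incident large face being a small quadrilateral, and a quadrilateral contributes only $1$ to $\sum_f(k_f-3)$; distributing one gap sector to each vertex through disjoint quadrilaterals (four corners per quadrilateral) yields only ${\rm deg}_{\rm avr}\to\tfrac{11}{2}$, short of the target. To reach $5$ one must show that the gap sectors are forced to coalesce into genuinely large faces, so that each large $k$-gon contributes nearly its full $k-3$, or equivalently that the curvature-driven angular defect — whose total is of order $V$, while it is built from per-face contributions of order $r^2$ — cannot be absorbed by a sparse population of small faces. Here I expect the hyperbolic area estimate to be the right tool: every face has area at least $\pi-3\beta(r)$ (the area of the equilateral triangle of side $2r$), while the total area is pinned by Gauss--Bonnet and the hyperbolic isoperimetric inequality, which couples the number and sizes of faces to the genuinely exponential growth of the region. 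The main obstacle is uniformity: the per-vertex obstruction degenerates as $r\to 0$, so one must rule out the Euclidean-looking scenario of large near-hexagonal patches (of up to $\sim r^{-2}$ disks, where the average degree tends to $6$) by proving that such patches cannot be assembled across a region large enough to hold the whole packing without incurring $\Theta(V)$ worth of large-face defect. A secondary difficulty is pinning the sharp constant $5$ rather than some value in $(5,6)$, and treating irregular packings (with interior holes and mixed degrees) on the same footing.
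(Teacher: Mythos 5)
The first thing to note is that there is no proof in the paper to compare yours against: the statement is Conjecture~\ref{B-C-K}, which the paper explicitly presents as the \emph{still open} hyperbolic analogue of Theorem~\ref{Bezdek-Connelly-Kertesz}, raised in \cite{BeCoKe}. So the only honest outcomes for a blind attempt are a solution of an open problem or a precise identification of where the argument stops; yours is the latter, and you say so yourself.

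As an analysis, what you wrote is sound. The Euler-formula bookkeeping is right: from $\sum_f(k_f-3)=3V-E-6$, the bound ${\rm deg}_{\rm avr}<5$ is equivalent to the total face excess exceeding $V/2-6$. The local facts are right as well: for small $r$ the equilateral hyperbolic triangle of side $2r$ has angle $\beta(r)<\pi/3$ with $2\pi/\beta(r)\in(6,7)$, so each disk has at most six touching neighbours, and since $d\,\beta(r)<2\pi$ for every $d\le 6$, no interior vertex can be surrounded exclusively by triangular faces. Your accounting of what this buys is also correct: if each forced non-triangular face is merely a quadrilateral shared among four vertices, the excess is only about $V/4$, which yields $\limsup\le 11/2$, not $<5$. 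The genuine gap is exactly the step you flag as the crux: one must show that the per-vertex angular defects, each of size only $O(r^2)$, cannot be absorbed by a sparse population of small faces, i.e., that the face excess is forced up to $\bigl(\tfrac12+\epsilon\bigr)V$. Your proposed tools --- Gauss--Bonnet together with the hyperbolic isoperimetric inequality --- are the natural candidates, but the estimate is never carried out, and the hard configuration you yourself name (quasi-Euclidean hexagonal patches of up to $\sim r^{-2}$ disks, whose boundary fraction is only $O(r)$, glued along low-defect seams) is precisely what such an estimate would have to exclude quantitatively. Until that aggregation step is proved, the argument establishes at best a bound of $11/2$, and the statement remains what the paper says it is: a conjecture, not a theorem.
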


\section{Largest contact numbers in $3$-space}\label{sec:space}

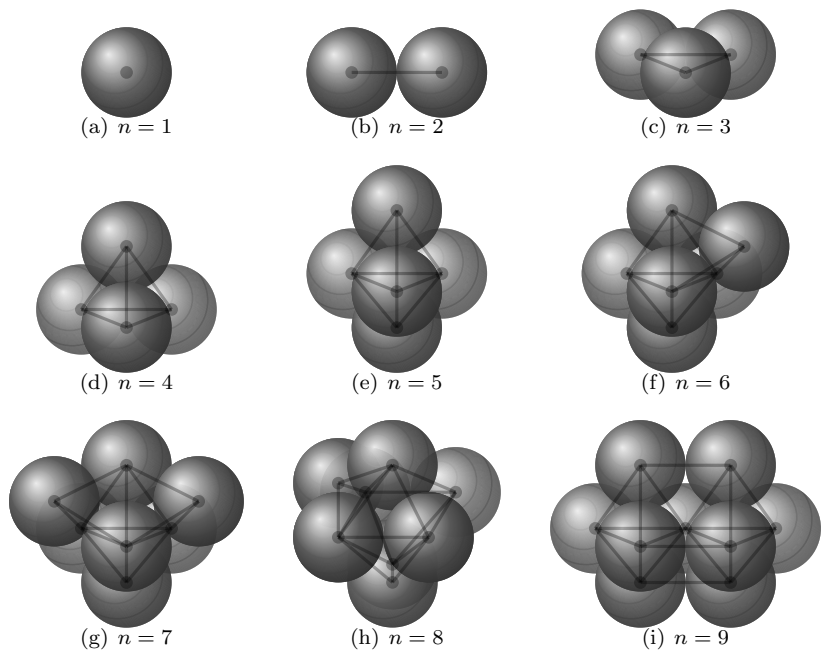
\begin{figure}[ht]\label{fig:maxcontact}
\centering
\begin{tabular}{ccc}
\subfigure [$n=1$]{
\begin{tikzpicture}[scale=1.2]
\centering
\shade[ball color=black!50!white, opacity=0.7] (9,.5) circle (.5cm);

\fill[opacity=0.3] (9,0.5) circle (2pt); 

\end{tikzpicture}
} &

\subfigure [$n=2$]{
\begin{tikzpicture}[scale=1.2]
\centering
\shade[ball color=black!50!white, opacity=0.7] (9,.5) circle (.5cm);
\shade[ball color=black!50!white, opacity=0.7] (10,.5) circle (.5cm);

\fill[opacity=0.3] (9,0.5) circle (2pt); 
\fill[opacity=0.3] (10,0.5) circle (2pt); 

\draw[very thick, opacity=0.3] (9,0.5) -- (10,0.5); 
\end{tikzpicture}
} &

\subfigure [$n=3$]{
\begin{tikzpicture}[scale=1.2]
\centering
\shade[ball color=black!50!white, opacity=0.5] (9,.5) circle (.5cm);
\shade[ball color=black!50!white, opacity=0.5] (10,.5) circle (.5cm);
\shade[ball color=black!50!white, opacity=0.7] (9.5,.3) circle (.5cm);

\fill[opacity=0.3] (9,0.5) circle (2pt); 
\fill[opacity=0.3] (10,0.5) circle (2pt); 
\fill[opacity=0.3] (9.5,0.3) circle (2pt); 

\draw[very thick, opacity=0.3] (9,0.5) -- (10,0.5); 
\draw[very thick, opacity=0.3] (9,0.5) -- (9.5,0.3); 
\draw[very thick, opacity=0.3] (10,0.5) -- (9.5,0.3); 

\end{tikzpicture}
} \\

\subfigure [$n=4$]{
\begin{tikzpicture}[scale=1.2]
\centering
\shade[ball color=black!50!white, opacity=0.4] (9,.5) circle (.5cm);
\shade[ball color=black!50!white, opacity=0.4] (10,.5) circle (.5cm);
\shade[ball color=black!50!white, opacity=0.6] (9.5,1.2) circle (.5cm);
\shade[ball color=black!50!white, opacity=0.7] (9.5,.3) circle (.5cm);

\fill[opacity=0.3] (9,0.5) circle (2pt); 
\fill[opacity=0.3] (10,0.5) circle (2pt); 
\fill[opacity=0.3] (9.5,1.2) circle (2pt); 
\fill[opacity=0.3] (9.5,0.3) circle (2pt); 

\draw[very thick, opacity=0.3] (9,0.5) -- (10,0.5); 
\draw[very thick, opacity=0.3] (9,0.5) -- (9.5,0.3); 
\draw[very thick, opacity=0.3] (10,0.5) -- (9.5,0.3); 
\draw[very thick, opacity=0.3] (9.5,1.2) -- (9.5,0.3); 
\draw[very thick, opacity=0.3] (9.5,1.2) -- (10,0.5); 
\draw[very thick, opacity=0.3] (9.5,1.2) -- (9,0.5); 
\end{tikzpicture}
} &

\subfigure [$n=5$]{
\begin{tikzpicture}[scale=1.2]
\centering
\shade[ball color=black!50!white, opacity=0.4] (9,.5) circle (.5cm);
\shade[ball color=black!50!white, opacity=0.4] (10,.5) circle (.5cm);
\shade[ball color=black!50!white, opacity=0.6] (9.5,1.2) circle (.5cm);
\shade[ball color=black!50!white, opacity=0.6] (9.5,-0.1) circle (.5cm);
\shade[ball color=black!50!white, opacity=0.7] (9.5,.3) circle (.5cm);

\fill[opacity=0.3] (9,0.5) circle (2pt); 
\fill[opacity=0.3] (10,0.5) circle (2pt); 
\fill[opacity=0.3] (9.5,1.2) circle (2pt); 
\fill[opacity=0.3] (9.5,-0.1) circle (2pt); 
\fill[opacity=0.3] (9.5,0.3) circle (2pt); 

\draw[very thick, opacity=0.3] (9,0.5) -- (10,0.5); 
\draw[very thick, opacity=0.3] (9,0.5) -- (9.5,0.3); 
\draw[very thick, opacity=0.3] (10,0.5) -- (9.5,0.3); 
\draw[very thick, opacity=0.3] (9.5,1.2) -- (9.5,0.3); 
\draw[very thick, opacity=0.3] (9.5,1.2) -- (10,0.5); 
\draw[very thick, opacity=0.3] (9.5,1.2) -- (9,0.5); 
\draw[very thick, opacity=0.3] (9.5,-0.1) -- (9.5,0.3); 
\draw[very thick, opacity=0.3] (9.5,-0.1) -- (10,0.5); 
\draw[very thick, opacity=0.3] (9.5,-0.1) -- (9,0.5); 
\end{tikzpicture}
} &

\subfigure [$n=6$]{
\begin{tikzpicture}[scale=1.2]
\centering
\shade[ball color=black!50!white, opacity=0.4] (9,.5) circle (.5cm);
\shade[ball color=black!50!white, opacity=0.4] (10,.5) circle (.5cm);
\shade[ball color=black!50!white, opacity=0.6] (9.5,1.2) circle (.5cm);
\shade[ball color=black!50!white, opacity=0.6] (9.5,-0.1) circle (.5cm);
\shade[ball color=black!50!white, opacity=0.7] (10.3,0.8) circle (.5cm);
\shade[ball color=black!50!white, opacity=0.7] (9.5,.3) circle (.5cm);

\fill[opacity=0.3] (9,0.5) circle (2pt); 
\fill[opacity=0.3] (10,0.5) circle (2pt); 
\fill[opacity=0.3] (9.5,1.2) circle (2pt); 
\fill[opacity=0.3] (9.5,-0.1) circle (2pt); 
\fill[opacity=0.3] (9.5,0.3) circle (2pt); 
\fill[opacity=0.3] (10.3,0.8) circle (2pt); 

\draw[very thick, opacity=0.3] (9,0.5) -- (10,0.5); 
\draw[very thick, opacity=0.3] (9,0.5) -- (9.5,0.3); 
\draw[very thick, opacity=0.3] (10,0.5) -- (9.5,0.3); 
\draw[very thick, opacity=0.3] (9.5,1.2) -- (9.5,0.3); 
\draw[very thick, opacity=0.3] (9.5,1.2) -- (10,0.5); 
\draw[very thick, opacity=0.3] (9.5,1.2) -- (9,0.5); 
\draw[very thick, opacity=0.3] (9.5,-0.1) -- (9.5,0.3); 
\draw[very thick, opacity=0.3] (9.5,-0.1) -- (10,0.5); 
\draw[very thick, opacity=0.3] (9.5,-0.1) -- (9,0.5); 
\draw[very thick, opacity=0.3] (10.3,0.8) -- (9.5,0.3); 
\draw[very thick, opacity=0.3] (10.3,0.8) -- (10,0.5); 
\draw[very thick, opacity=0.3] (10.3,0.8) -- (9.5,1.2); 
\end{tikzpicture}
}\\

\subfigure [$n=7$]{
\begin{tikzpicture}[scale=1.2]
\centering
\shade[ball color=black!50!white, opacity=0.4] (9,.5) circle (.5cm);
\shade[ball color=black!50!white, opacity=0.4] (10,.5) circle (.5cm);
\shade[ball color=black!50!white, opacity=0.6] (9.5,1.2) circle (.5cm);
\shade[ball color=black!50!white, opacity=0.6] (9.5,-0.1) circle (.5cm);
\shade[ball color=black!50!white, opacity=0.7] (10.3,0.8) circle (.5cm);
\shade[ball color=black!50!white, opacity=0.7] (8.7,0.8) circle (.5cm);
\shade[ball color=black!50!white, opacity=0.7] (9.5,.3) circle (.5cm);

\fill[opacity=0.3] (9,0.5) circle (2pt); 
\fill[opacity=0.3] (10,0.5) circle (2pt); 
\fill[opacity=0.3] (9.5,1.2) circle (2pt); 
\fill[opacity=0.3] (9.5,-0.1) circle (2pt); 
\fill[opacity=0.3] (9.5,0.3) circle (2pt); 
\fill[opacity=0.3] (10.3,0.8) circle (2pt); 
\fill[opacity=0.3] (8.7,0.8) circle (2pt); 

\draw[very thick, opacity=0.3] (9,0.5) -- (10,0.5); 
\draw[very thick, opacity=0.3] (9,0.5) -- (9.5,0.3); 
\draw[very thick, opacity=0.3] (10,0.5) -- (9.5,0.3); 
\draw[very thick, opacity=0.3] (9.5,1.2) -- (9.5,0.3); 
\draw[very thick, opacity=0.3] (9.5,1.2) -- (10,0.5); 
\draw[very thick, opacity=0.3] (9.5,1.2) -- (9,0.5); 
\draw[very thick, opacity=0.3] (9.5,-0.1) -- (9.5,0.3); 
\draw[very thick, opacity=0.3] (9.5,-0.1) -- (10,0.5); 
\draw[very thick, opacity=0.3] (9.5,-0.1) -- (9,0.5); 
\draw[very thick, opacity=0.3] (10.3,0.8) -- (9.5,0.3); 
\draw[very thick, opacity=0.3] (10.3,0.8) -- (10,0.5); 
\draw[very thick, opacity=0.3] (10.3,0.8) -- (9.5,1.2); 
\draw[very thick, opacity=0.3] (8.7,0.8) -- (9.5,0.3); 
\draw[very thick, opacity=0.3] (8.7,0.8) -- (9,0.5);
\draw[very thick, opacity=0.3] (8.7,0.8) -- (9.5,1.2); 
\end{tikzpicture}
} &

\subfigure [$n=8$]{
\begin{tikzpicture}[scale=1.2]
\centering
\shade[ball color=black!50!white, opacity=0.4] (9.3,1) circle (.5cm);
\shade[ball color=black!50!white, opacity=0.4] (10.3,1) circle (.5cm);
\shade[ball color=black!50!white, opacity=0.6] (9,1.1) circle (.5cm);
\shade[ball color=black!50!white, opacity=0.4] (9.6,0.2) circle (.5cm);
\shade[ball color=black!50!white, opacity=0.6] (9.6,0) circle (.5cm);
\shade[ball color=black!50!white, opacity=0.6] (9.6,1.3) circle (.5cm);
\shade[ball color=black!50!white, opacity=0.8] (9,.5) circle (.5cm);
\shade[ball color=black!50!white, opacity=0.8] (10,.5) circle (.5cm);

\fill[opacity=0.3] (9.3,1) circle (2pt); 
\fill[opacity=0.3] (10.3,1) circle (2pt); 
\fill[opacity=0.3] (9.6,1.3) circle (2pt);
\fill[opacity=0.3] (9.6,0.2) circle (2pt);
\fill[opacity=0.3] (9.6,0) circle (2pt);
\fill[opacity=0.3] (9,1.1) circle (2pt);
\fill[opacity=0.3] (9,0.5) circle (2pt); 
\fill[opacity=0.3] (10,0.5) circle (2pt); 

\draw[very thick, opacity=0.3] (9,0.5) -- (10,0.5); 
\draw[very thick, opacity=0.3] (10,0.5) -- (10.3,1); 
\draw[very thick, opacity=0.3] (10.3,1) -- (9.3,1); 
\draw[very thick, opacity=0.3] (9.3,1) -- (9,0.5); 

\draw[very thick, opacity=0.3] (9.6,1.3) -- (10,0.5); 
\draw[very thick, opacity=0.3] (9.6,1.3) -- (10.3,1); 
\draw[very thick, opacity=0.3] (9.6,1.3) -- (9.3,1); 
\draw[very thick, opacity=0.3] (9.6,1.3) -- (9,0.5); 

\draw[very thick, opacity=0.3] (9.6,0.2) -- (10,0.5); 
\draw[very thick, opacity=0.3] (9.6,0.2) -- (10.3,1); 
\draw[very thick, opacity=0.3] (9.6,0.2) -- (9.3,1); 
\draw[very thick, opacity=0.3] (9.6,0.2) -- (9,0.5); 

\draw[very thick, opacity=0.3] (9.6,0) -- (10,0.5); 
\draw[very thick, opacity=0.3] (9.6,0) -- (9.6,0.2); 
\draw[very thick, opacity=0.3] (9.6,0) -- (9,0.5); 

\draw[very thick, opacity=0.3] (9,1.1) -- (9.6,1.3); 
\draw[very thick, opacity=0.3] (9,1.1) -- (9.3,1); 
\draw[very thick, opacity=0.3] (9,1.1) -- (9,0.5); 
\end{tikzpicture}
} & 

\subfigure [$n=9$]{
\begin{tikzpicture}[scale=1.2]
\centering
\shade[ball color=black!50!white, opacity=0.4] (9,.5) circle (.5cm);
\shade[ball color=black!50!white, opacity=0.4] (10,.5) circle (.5cm);
\shade[ball color=black!50!white, opacity=0.4] (11,.5) circle (.5cm);

\shade[ball color=black!50!white, opacity=0.6] (9.5,1.2) circle (.5cm);
\shade[ball color=black!50!white, opacity=0.6] (10.5,1.2) circle (.5cm);

\shade[ball color=black!50!white, opacity=0.6] (9.5,-0.1) circle (.5cm);
\shade[ball color=black!50!white, opacity=0.6] (10.5,-0.1) circle (.5cm);

\shade[ball color=black!50!white, opacity=0.7] (9.5,.3) circle (.5cm);
\shade[ball color=black!50!white, opacity=0.7] (10.5,.3) circle (.5cm);

\fill[opacity=0.3] (9,0.5) circle (2pt); 
\fill[opacity=0.3] (10,0.5) circle (2pt); 
\fill[opacity=0.3] (11,0.5) circle (2pt);
\fill[opacity=0.3] (9.5,1.2) circle (2pt); 
\fill[opacity=0.3] (10.5,1.2) circle (2pt); 
\fill[opacity=0.3] (9.5,-0.1) circle (2pt); 
\fill[opacity=0.3] (10.5,-0.1) circle (2pt); 
\fill[opacity=0.3] (9.5,0.3) circle (2pt); 
\fill[opacity=0.3] (10.5,0.3) circle (2pt); 

\draw[very thick, opacity=0.3] (9,0.5) -- (10,0.5); 
\draw[very thick, opacity=0.3] (9,0.5) -- (9.5,0.3); 
\draw[very thick, opacity=0.3] (10,0.5) -- (9.5,0.3); 
\draw[very thick, opacity=0.3] (9.5,1.2) -- (9.5,0.3); 
\draw[very thick, opacity=0.3] (9.5,1.2) -- (10,0.5); 
\draw[very thick, opacity=0.3] (9.5,1.2) -- (9,0.5); 
\draw[very thick, opacity=0.3] (9.5,-0.1) -- (9.5,0.3); 
\draw[very thick, opacity=0.3] (9.5,-0.1) -- (10,0.5); 
\draw[very thick, opacity=0.3] (9.5,-0.1) -- (9,0.5); 

\draw[very thick, opacity=0.3] (11,0.5) -- (10.5,0.3); 
\draw[very thick, opacity=0.3] (11,0.5) -- (10.5,-0.1);
\draw[very thick, opacity=0.3] (11,0.5) -- (10.5,1.2);
\draw[very thick, opacity=0.3] (11,0.5) -- (10,0.5);

\draw[very thick, opacity=0.3] (10,0.5) -- (10.5,0.3); 
\draw[very thick, opacity=0.3] (10,0.5) -- (10.5,-0.1);
\draw[very thick, opacity=0.3] (10,0.5) -- (10.5,1.2);

\draw[very thick, opacity=0.3] (10.5,1.2) -- (10.5,0.3); 
\draw[very thick, opacity=0.3] (10.5,-0.1) -- (10.5,0.3); 

\draw[very thick, opacity=0.3] (9.5,0.3) -- (10.5,0.3); 
\draw[very thick, opacity=0.3] (9.5,1.2) -- (10.5,1.2);
\draw[very thick, opacity=0.3] (9.5,-0.1) -- (10.5,-0.1);  
\end{tikzpicture}
}
\end{tabular}
\caption{Contact graphs with $c(n,3)$ contacts, for $n=1, 2, 3, 4, 5$ (trivial cases) and largest known number of contacts, for $n=6,7, 8, 9$. For $n=1, 2, 3, 4, 5$ the maximal contact graphs are unique up to isometry. All the packings listed are minimally rigid and only for $n=9$, the packing is not rigid as the two bipyramids can be twisted slightly about the common pivot (see Section \ref{sec:empirical}).}
\end{figure}

Theorem~\ref{Harborth} implies in a straightforward way that 

\begin{equation}\label{asymp-planar}
\lim_{n\to +\infty}\frac{3n-c(n,2)}{\sqrt{n}}=\sqrt{12}=3.464\dots \ . 
\end{equation}

Although one cannot hope for an explicit formula for $c(n,3)$ in terms of $n$, there might be a way to prove a proper analogue of (\ref{asymp-planar}) in $\mathbb{E}^{3}$. 

To this end we know only what is stated in Theorem \ref{Bezdek-etc}. In order to state these results we need an additional concept. Let us imagine that we generate packings of $n$ unit balls in $\mathbb{E}^{3}$ in such a special way that each and every center of the $n$ unit balls chosen is a lattice point of the face-centered cubic lattice with shortest non-zero lattice vector of length $2$. Then let $c_{\rm fcc}(n)$ denote the largest possible contact number of all packings of $n$ unit balls obtained in this way.

The motivation for considering $c_{\rm fcc}(n)$ is obvious. Since in the planar case, the densest disk packing arrangement also maximizes contacts between disks and the face-centered cubic lattice is the densest for sphere packings in ${\mathbb{E}}^{3}$ \cite{Hales}, it makes sense to consider $c_{\rm fcc}(n)$ as a candidate for $c(n,3)$. Moreover, it is easy to see that $c_{\rm fcc}(2) = c(2, 3) = 1$, $c_{\rm fcc}(3) = c(3, 3) = 3$ and $c_{\rm fcc}(4) = c(4, 3) = 6$.  

\begin{theorem}\label{Bezdek-etc}

\item(i) \hskip0.6cm $c(n,3) < 6n-0.926n^{\frac{2}{3}}$, for all $n\ge 2$.

\item(ii) \hskip0.5cm $c_{\rm fcc}(n) < 6n-\frac{3\sqrt[3]{18\pi}}{\pi}n^{\frac{2}{3}}=6n-3.665\dots n^{\frac{2}{3}}$, for all $n\ge 2$.

\item(iii) \hskip0.4cm $6n-\sqrt[3]{486}n^{\frac{2}{3}}<2k(2k^2-3k+1)\leq c_{\rm fcc}(n)\leq c(n,3)$, for all $n=\frac{k(2k^2+1)}{3}$ with $k\ge 2$.

\end{theorem}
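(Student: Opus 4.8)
The plan is to handle the three parts by two distinct mechanisms: parts (i) and (ii) are upper bounds, obtained by bounding the \emph{contact deficiency} $6n-c$ from below, whereas part (iii) is a matching lower bound coming from an explicit packing. For the upper bounds the starting point is the kissing number: every vertex of a contact graph of unit balls in $\mathbb{E}^3$ has degree at most $k(3)=12$, so $\sum_i\deg(v_i)=2c\le 12n$ gives the trivial $c\le 6n$. The entire content of (i) and (ii) is that the deficiency $6n-c=\tfrac12\sum_i\bigl(12-\deg(v_i)\bigr)$ must grow at least like $n^{2/3}$, and I would extract this growth from an isoperimetric estimate applied to the region occupied by the packing.

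Part (ii) is the cleaner case, so I would do it first. When the centers lie on the fcc lattice with minimal vector $2$, each occupied site carries its honeycomb rhombic dodecahedron (a bounded cell of volume $4\sqrt2$), and a contact of $B_i$ occurs exactly when the lattice neighbour across one of the twelve facets is occupied. Counting facet–cell incidences gives $12n=2c+F$, where $F$ is the number of facets separating an occupied cell from an empty one; hence $6n-c=\tfrac12 F$. Since $F$ times the area of a single rhombic facet equals $\area(\bd U)$ for the union $U$ of the $n$ cells, and $\vol(U)=4\sqrt2\,n$, the isoperimetric inequality $\area(\bd U)\ge (36\pi)^{1/3}\vol(U)^{2/3}$ converts directly into $F\ge \mathrm{const}\cdot n^{2/3}$; carrying the exact facet area through the arithmetic should yield the stated constant $\tfrac{3\sqrt[3]{18\pi}}{\pi}$.

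For part (i) the same philosophy applies, but without an ambient lattice the Voronoi cells of the finite point set are no longer congruent and the outer ones are unbounded, so two extra ingredients are needed. First I would pass to \emph{truncated} Voronoi cells $\hat V_i=V_i\cap(c_i+\rho\,\mathbf{B}^3)$ for a suitable radius $\rho$; these are bounded, and a local volume estimate stemming from Hales' sphere-packing density bound \cite{Hales} (that is, from the Dodecahedral-type inequality) forces the union $W$ of the truncated cells to have $\vol(W)\gtrsim n$ and hence $\area(\bd W)\gtrsim n^{2/3}$ by isoperimetry. Second — and this is where I expect the real difficulty to lie — I must relate the \emph{continuous} boundary area of $W$ to the \emph{discrete} deficiency $\sum_i(12-\deg(v_i))$. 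In contrast with the lattice case, a Voronoi facet need not be a contact (it only certifies a neighbour at distance $\ge 2$), so the main obstacle is a sharp local estimate controlling, per ball, how much outer boundary area can be accrued for each missing contact; optimizing this local tradeoff against the global isoperimetric lower bound, together with the loss from truncation, is what I expect to produce the weaker constant $0.926$.

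Part (iii) is constructive. The key observation is that $n=\tfrac{k(2k^2+1)}{3}$ is the $k$-th octahedral number, which decomposes as $2P_{k-1}+k^2$ with $P_j$ the square-pyramidal numbers, so I would realize the packing as the fcc points filling a regular octahedron, built as a square bipyramid whose equatorial layer is a $k\times k$ square-packed sheet and whose successive layers shrink to a single sphere at each of the two apices. Writing the fcc packing as stacked square layers — each interior sphere having four in-layer contacts and four to each adjacent layer, for the expected total of $12$ — the contact count splits into within-layer and between-layer sums, and this bookkeeping should give exactly $2k(2k^2-3k+1)=2k(2k-1)(k-1)$ contacts, which checks against the octahedron $k=2$, $n=6$, $c=12$. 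Finally, substituting $n=\tfrac{k(2k^2+1)}{3}$ into $6n-\sqrt[3]{486}\,n^{2/3}$ and comparing with $2k(2k^2-3k+1)$ reduces the leftmost inequality to an elementary if slightly tedious polynomial estimate valid for all $k\ge2$; since the construction is itself an fcc packing, the chain $2k(2k^2-3k+1)\le c_{\mathrm{fcc}}(n)\le c(n,3)$ is then immediate.
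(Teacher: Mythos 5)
First, a point of context: the survey you were given does not itself prove Theorem~\ref{Bezdek-etc}; it only records that (i) was proved in \cite{BeRe} and (ii), (iii) in \cite{Be12}. So your proposal has to be judged on its own merits. Parts (iii) and (ii) of your plan do hold up. For (iii), your octahedral cluster is a genuine fcc sub-packing and the bookkeeping closes: within-layer contacts give $2k(k-1)+4\sum_{j=1}^{k-1}j(j-1)$, between-layer contacts give $8\sum_{j=1}^{k-1}j^2$, and the total is $2k(k-1)(2k-1)=2k(2k^2-3k+1)$ (check: $k=3$ gives $n=19$, $c=60$, matching Table 1); moreover, after substituting $6n=4k^3+2k$, the leftmost inequality in (iii) is equivalent to $\sqrt[3]{486}\,n^{2/3}>6k^2$, i.e.\ to $n>\tfrac{2}{3}k^3$, which is immediate from $n=\tfrac{2k^3+k}{3}$ — so this part is complete. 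For (ii), the identities $12n=2c+F$, $\area(\bd U)=\sqrt{2}\,F$ and $\vol(U)=4\sqrt{2}\,n$ are all correct, and the isoperimetric inequality gives
$$6n-c=\frac{F}{2}\ \ge\ \frac{(36\pi)^{1/3}(4\sqrt{2})^{2/3}}{2\sqrt{2}}\,n^{2/3}=(36\sqrt{2}\,\pi)^{1/3}n^{2/3}\approx 5.43\,n^{2/3},$$
which is \emph{not} the constant you predicted: it is larger than $\tfrac{3\sqrt[3]{18\pi}}{\pi}=3.665\dots$, so your route proves a statement stronger than (ii), and (ii) follows a fortiori. The mismatch is harmless but worth noting: the Voronoi-tiling-plus-isoperimetry argument does not reproduce the constant of \cite{Be12}, it beats it.

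The genuine gap is part (i), and it is the heart of the theorem. What you offer there is a strategy whose decisive step you explicitly defer (``where I expect the real difficulty to lie''), and that step cannot be waved through, because its most natural implementation fails to give the leading coefficient $6$. Concretely: inflating the balls to radius $1+\lambda$, a contact hides on each of the two inflated spheres a cap of angular radius $\arccos\frac{1}{1+\lambda}$, and the caps belonging to the several contacts of one ball have pairwise disjoint interiors only if this radius is at most $30^{\circ}$ (contact directions can be $60^{\circ}$ apart), i.e.\ only if $1+\lambda\le\tfrac{2}{\sqrt{3}}$. At that radius, twelve disjoint caps cover only the fraction $3(2-\sqrt{3})\approx 0.80$ of the sphere, so a ball with the full $12$ contacts can still contribute positive boundary area (it does, e.g., in the infinite fcc packing); hence a per-ball inequality of the form $(\text{boundary area contributed})\le\kappa\,(12-\deg)$ is simply false at this truncation radius. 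What the cap count actually yields is $\area(\bd W)\le\tfrac{16\pi}{3}n-\tfrac{8\pi}{3}(2-\sqrt{3})\,c$, which combined with isoperimetry gives a leading coefficient $\tfrac{2}{2-\sqrt{3}}=4+2\sqrt{3}\approx 7.46$ — essentially the Kuperberg--Schramm bound of Theorem~\ref{kuperberg-schramm}, strictly worse than $6n$. To obtain $6n$ minus an $n^{2/3}$ term one must invoke the kissing number theorem $k(3)=12$ \emph{and} a nontrivial geometric lemma ensuring that a ball with $12$ contacts contributes no boundary area at a suitably chosen larger truncation radius, together with a quantitative bound on the area contributed per missing contact; and to land on the specific constant $0.926$ one further needs a Hales--McLaughlin dodecahedral-type volume bound for truncated Voronoi cells rather than the bare packing density (the survey itself notes that Kepler's density bound alone only gives $6n-0.152\,n^{2/3}$ via Corollary~\ref{d-sphere}). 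None of this machinery appears in your sketch, so part (i) remains unproven; your ``sharp local estimate'' names the missing theorem rather than supplying it.
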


Recall that $(i)$ was proved in \cite{BeRe} (using the method of \cite{Be12}), while $(ii)$ and $(iii)$ were proved in \cite{Be12}. Clearly, Theorem~\ref{Bezdek-etc} implies that 
\begin{equation}\label{Be-Re}
0.926<\frac{6n-c(n,3)}{n^{\frac{2}{3}}}< \sqrt[3]{486}=7.862\dots ,
\end{equation} 
for all $n=\frac{k(2k^2+1)}{3}$ with $k\ge 2$.

Now consider the complexity of recognizing contact graphs of congruent sphere packings in ${\mathbb{E}}^{3}$. Just like its 2-dimensional analogue, Hlin\v{e}n\'{y} \cite{Hl} showed the 3-dimensional problem to be NP-hard by reduction from 3-SAT. In fact, the same is true in four dimensions \cite{Hl}.

\begin{theorem}\label{3-hard}
The problem of recognizing contact graphs of unit sphere packings in ${\mathbb{E}}^3$ (resp., ${\mathbb{E}}^4$) is NP-hard.
\end{theorem}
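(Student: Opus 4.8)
The plan is to prove NP-hardness by a polynomial-time reduction from 3-SAT, following the ``logic engine'' paradigm that underlies the corresponding planar result (Theorem~\ref{2-hard}). Given a 3-CNF formula $\phi$ with variables $x_1,\dots,x_m$ and clauses $C_1,\dots,C_k$, I would construct in polynomial time a graph $G_\phi$ together with the claim that $G_\phi$ is isomorphic to the contact graph of some packing of unit balls in $\mathbb{E}^3$ if and only if $\phi$ is satisfiable. Since 3-SAT is NP-complete and the construction is polynomial, NP-hardness of contact-graph recognition follows. The four-dimensional statement then follows from the same template, using the regular $4$-simplex forced by five mutually tangent balls ($K_5$) as the rigid building block in place of the tetrahedron.

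The first ingredient is a supply of rigid building blocks. The crucial geometric fact is that four mutually touching unit balls have centers forming a regular tetrahedron of edge length $2$, so the contact graph $K_4$ is \emph{globally rigid}: every unit-ball representation of it is congruent. By gluing such tetrahedra face-to-face one obtains rigid ``bars'' and a rigid outer frame whose contact representation is forced up to an isometry of $\mathbb{E}^3$. This rigidity is what lets combinatorial data (the presence or absence of an edge) dictate geometry: once the frame is pinned down, the admissible positions of the remaining balls lie in a small, essentially discrete set.

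I would then design the standard gadgets. A variable gadget is a \emph{bistable} sub-framework hung on the rigid frame that admits exactly two contact representations, interpreted as the two truth values of $x_i$; the non-edges of the gadget, which force the corresponding centers to lie at distance strictly greater than $2$, are what forbid intermediate positions and lock the gadget into one of its two states. A clause gadget consists of three ``flags,'' one per literal, positioned so that the three cannot be simultaneously pushed into their ``unsatisfied'' slots without two non-adjacent balls overlapping; thus a valid non-overlapping placement of the clause gadget exists precisely when at least one literal is true. Finally, rigid transmission bars couple each variable gadget to every clause in which its literal appears, so that a single global choice of states is propagated consistently across the whole structure. Assembling the frame, the $m$ variable gadgets, the $k$ clause gadgets and the connecting bars into one graph $G_\phi$, a satisfying assignment yields an explicit non-overlapping unit-ball packing realizing $G_\phi$, and conversely any contact representation of $G_\phi$ reads off a satisfying assignment from the forced states of the variable gadgets.

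The main obstacle is the ``only if'' direction: I must prove that $G_\phi$ has \emph{no} unintended contact representations. Concretely, the rigidity of the frame and bars must be robust enough that, in any unit-ball packing whose contact graph is $G_\phi$, each variable gadget is genuinely forced into one of its two discrete states and each flag into one of its discrete slots, ruling out continuous deformations, reflections, or ``accidental'' tangencies that might realize an unsatisfiable formula. This is harder in $\mathbb{E}^3$ than in the plane because the extra rotational degrees of freedom give the balls more room to slip, so the rigidity and packing estimates certifying that non-adjacent balls stay strictly separated must be carried out with explicit quantitative margins. Verifying these local rigidity and separation inequalities for each gadget, and checking that distinct gadgets sharing the common frame do not interfere, is the technical heart of the argument.
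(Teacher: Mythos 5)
The paper itself contains no proof of this theorem: it is a survey statement citing Hlin\v{e}n\'{y} \cite{Hl}, who proved it by a reduction from 3-SAT, and Section 7.2 of the paper describes the machinery (rigid sets and schemes of $m$-combs, from \cite{HlKr}) by which such a reduction is actually made rigorous. Your proposal adopts the same high-level strategy as the cited source, but it is a plan rather than a proof: the variable gadgets, clause gadgets, and transmission bars are never constructed, and the decisive ``only if'' direction --- showing that every unit-ball packing realizing $G_\phi$ is forced into one of the finitely many intended configurations --- is exactly what you defer as ``the technical heart.'' Keep in mind that for contact graphs a non-edge forces only the inequality $\mathrm{dist} > 2$, not a prescribed distance, so no standard bar-joint rigidity theory applies automatically; the entire mathematical content of the theorem lies in the explicit gadget geometry together with quantitative separation estimates, and that content is absent from your argument.

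Moreover, the one concrete geometric ingredient you do commit to is flawed. It is true that $K_4$ forces a regular tetrahedron of edge $2$, and a chain of face-glued tetrahedra is forced ball by ball (the mirror position of each new ball would coincide with an existing ball). But such chains are Boerdijk--Coxeter helices: the dihedral angle $\arccos(1/3)$ of the regular tetrahedron is not a rational multiple of $\pi$, so these chains twist irrationally, are not straight, and by \'Swierczkowski's theorem a closed loop of face-glued regular tetrahedra does not exist. Hence you cannot build a ``rigid outer frame,'' straight ``bars,'' or gadgets meeting at prescribed relative positions by face-gluing tetrahedra alone; exact tangencies (distance exactly $2$) will simply fail to close up. This is precisely why \cite{HlKr} work with other rigid sets (octahedra, cross-polytopes, i.e.\ lattice-like pieces) and the elaborate five-condition definition of an $m$-comb, which encodes the separation and non-interference requirements you only allude to. As it stands, your argument does not establish NP-hardness in $\mathbb{E}^3$, and the same gap carries over verbatim to your $\mathbb{E}^4$ claim.
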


\section{Empirical approaches}\label{sec:empirical}
\begin{table}[ht]\label{table:values}
\centering
\footnotesize\setlength{\tabcolsep}{2.5pt}
\begin{tabular}{ccccc}
\hline\noalign{\smallskip}
$n$ & Lower bound \cite{Be12} & fcc upper bound \cite{Be12} & General upper bound \cite{BeRe} & (Putatively) Largest\\
 & $2k(2k^2-3k+1)$   & $\left\lfloor 6n-\frac{3\sqrt[3]{18\pi}}{\pi}n^{2/3}\right\rfloor$ & $\left\lfloor 6n-0.926n^{2/3}\right\rfloor$ & \cite{ArMaBr}, \cite{HC}, \cite{Hoy} \\
\noalign{\smallskip}\hline\noalign{\smallskip}
2 & & 6 & 10 & $1\ (= 3n - 5)$(trivial)\\
3 & & 10 & 16 & $3\ (= 3n - 6)$(trivial)\\
4 & & 14 & 21 & $6\ (= 3n - 6)$(trivial)\\
5 & & 19 & 27 & $9\ (= 3n - 6)$(trivial)\\ 
6 & 12 & 23 & 32 & $12^{*}\ (= 3n - 6)$\\
7 & & 28 & 38 & $15^{*}\ (= 3n - 6)$\\
8 & & 33 & 44 & $18^{*}\ (= 3n - 6)$\\
9 & & 38 & 49 & $21^{*}\ (= 3n - 6)$\\
10 & & 42 & 55 & $25^{*}\ (= 3n - 5)$\\ 
11 & & 47 & 61 & $29^{*}\ (= 3n - 4)$\\ 
12 & & 52 & 67 & $33^{**}\ (= 3n - 3)$\\
13 & & 57 & 72 & $36^{**}\ (= 3n - 3)$\\
14 & & 62 & 78 & $40^{**}\ (= 3n - 2)$\\
15 & & 67 & 84 & $44^{**}\ (= 3n - 1)$\\
16 & & 72 & 90 & $48^{**}\ (= 3n)$\\
17 & & 77 & 95 & $52^{**}\ (= 3n + 1)$\\
18 & & 82 & 101 & $56^{**}\ (= 3n + 2)$\\
19 & 60 & 87 & 107 & $60^{**}\ (= 3n + 3)$\\
\noalign{\smallskip}\hline
\end{tabular}
\normalsize
\caption{Bounds on the contact numbers of sphere packings in 3-space. The second column lists the lower bound when $n$ equals an octahedral number, i.e., $n=\frac{k(2k^2+1)}{3}$, for some $k=2,3,\ldots$. The third column lists the upper bound for packings on the face-centered cubic (fcc) lattice for all $n$, while the fourth column contains the general upper bound for all $n$. The final column contains the trivially known exact values for $n=2,3,4,5$ and the largest contact numbers found by the empirical approaches (for $n=6,7,8,9,10$ from \cite{ArMaBr}, for $n=11$ from \cite{Hoy} and for $n=12,\ldots,19$ from \cite{HC}). An asterisk * in the last column indicates the largest known contact number for minimally rigid clusters, while a double asterisk ** indicates the largest known contact number for rigid clusters.}
\end{table}

Throughout this section, we deal with finite unit sphere packings in three dimensional Euclidean space, that is, with finite packings of unit balls in $\mathbb{E}^{3}$. Therefore, in this section a `sphere' always means a unit sphere in $\mathbb{E}^{3}$. Taking inspiration from materials science and statistical physics, we will often refer to a finite sphere packing as a \textit{cluster}. Our aim is to describe three computational approaches that have recently been employed in constructing putatively maximal contact graphs for packings of $n$ spheres under certain rigidity assumptions. 

\begin{definition}[Minimal rigidity \cite{ArMaBr}]
A cluster of $n\geq 4$ unit spheres is said to be \textit {minimally rigid} if 
\begin{itemize}
\item each sphere is in contact with at least 3 others, and 
\item  the cluster has at least $3n-6$ contacts (that is, the corresponding contact graph has at least $3n-6$ edges).  
\end{itemize}
\end{definition}

\begin{definition}[Rigidity \cite{HC}]\label{rigidity}
A cluster of $n$ unit spheres is (nonlinearly) rigid if it cannot be deformed continuously by any finite amount and still maintain all contacts \cite{HC}.
\end{definition}

The first two approaches - which we discuss together - deal with minimally rigid clusters, while the third investigates rigid clusters. We observe that one can find minimally rigid clusters that are not rigid. The paper \cite{ArMaBr} contains such an example for $n=9$ (Fig. 1 (i)).

\subsection{Contact number estimates for up to 11 spheres}

Arkus, Manoharan and Brenner \cite{ArMaBr} made an attempt to exhaustively generate all minimally rigid packings of $n$ spheres that are either local or global maxima of the number of contacts. Here a packing is considered a global maximum if the spheres in the cluster cannot form any additional contacts or a local maximum if new contacts can only be created after breaking an existing contact. They produce a list of maximal contact minimally rigid sphere packings for $n=2,\ldots, 9$, which is putatively complete up to possible omissions due to round off errors, and a partial list for 10 spheres. Since the number of such packings grows exponentially with $n$, their approach can only be implemented on a computer.   

Before we delve into the details of their methodology, it would be pertinent to understand why it focuses on finding minimally rigid clusters. It seems the minimal rigidity was considered due to two reasons: First, is Maxwell's criterion \cite{Max}, which is popular in physics literature and states that a rigid cluster of $n$ spheres has at least $3n-6$ contacts. This is false as in \cite{HC} examples of rigid clusters with $n\geq 10$ have been reported that are not minimally rigid. Second, is the intuition that any maximum contact cluster of $n\geq 4$ spheres should be minimally rigid. Up to our knowledge, there exists no proof of or counterexample to this intuition. We can, however, prove the following. The proof depends on the assumption that Arkus et al. \cite{ArMaBr} have found all minimally rigid packings of $n\leq 9$ spheres that maximize the number of contacts.

\begin{proposition}\label{empirical}
Assume that all maximal contact minimally rigid packings of $n\leq 9$ spheres are listed in \cite{ArMaBrArxiv} and \cite{ArMaBr}, then for $n=4,\ldots, 9$, 
\[c(n,3)=3n-6, 
\] 
and there exists a minimally rigid cluster with $c(n,3)$ contacts. 
\end{proposition}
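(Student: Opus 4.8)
The plan is to prove the equality by induction on $n$, invoking the completeness assumption only at the very last step. I would begin with the lower bound, which is the easy half: for each $n\in\{4,\dots,9\}$ the figures exhibited above (equivalently, the clusters tabulated in \cite{ArMaBr}) provide an explicit minimally rigid packing realizing $3n-6$ contacts, so $c(n,3)\ge 3n-6$, and the existence claim in the statement is settled simultaneously. The base case $n=4$ of the upper bound needs no list at all: four unit balls span at most $\binom{4}{2}=6$ pairs, and the regular tetrahedron of edge length $2$ attains all six, whence $c(4,3)=6=3\cdot 4-6$.

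For the inductive step I would fix $5\le n\le 9$, assume $c(m,3)=3m-6$ for all $4\le m\le n-1$, and take a packing $P$ of $n$ unit balls with the maximum number $c(n,3)$ of contacts. The key combinatorial observation is a minimum-degree bound obtained by deletion: if some ball of $P$ touched at most two others, removing it would leave a packing of $n-1$ balls with at least $c(n,3)-2$ contacts, so that
\[
c(n,3)-2\le c(n-1,3)=3(n-1)-6=3n-9,
\]
forcing $c(n,3)\le 3n-7$ and contradicting the lower bound $c(n,3)\ge 3n-6$. Hence every ball of $P$ is in contact with at least three others.

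Since $P$ additionally has $c(n,3)\ge 3n-6$ contacts, it satisfies both defining conditions and is therefore minimally rigid. Being a contact-maximizer, $P$ admits no additional contact and is in particular a (globally) maximal contact minimally rigid cluster, so by hypothesis it appears in the list of \cite{ArMaBrArxiv, ArMaBr}. Reading off that list, the largest contact number attained by any minimally rigid packing of $n\le 9$ balls is exactly $3n-6$, giving $c(n,3)\le 3n-6$; combined with the lower bound this yields $c(n,3)=3n-6$ and completes the induction.

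The main obstacle is not the geometry but the reliance on the enumeration itself: every mathematical ingredient above---the tetrahedral base case, the degree-reduction argument, and the passage from contact-maximality to minimal rigidity---is elementary, and the entire weight of the upper bound rests on the unproven completeness of the computer search. This is precisely why the conclusion must be stated conditionally; absent the exhaustive list, one would still have to rule out a maximum contact cluster that is \emph{not} minimally rigid, which is exactly the open intuition flagged in the surrounding text.
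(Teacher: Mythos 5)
Your proof is correct, and while it shares the paper's overall skeleton (induction on $n$, a minimum-degree-three argument by deletion, minimal rigidity, then an appeal to the completeness of the enumerated list), the key step is genuinely different. The paper's proof runs its induction on a stronger structural claim: that every maximal contact graph on $4\le n\le 9$ vertices has an \emph{exposed triangle} (a triangle of mutually touching spheres to which a new sphere can be attached) and minimum degree at least $3$. The exposed-triangle property is verified by inspecting the list, and it is used twice in the deletion argument: a low-degree vertex is removed and \emph{re-attached} to an exposed triangle of a maximal $n$-vertex configuration to manufacture a packing with strictly more contacts, and the same device supplies the lower bound $c(n,3)\ge 3n-6$ inductively. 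You dispense with exposed triangles entirely: your lower bound comes from the explicit clusters (tetrahedron, bipyramids, octahedron, etc.), and your degree bound is pure counting --- deleting a vertex of degree at most $2$ gives $c(n,3)-2\le c(n-1,3)=3n-9$, contradicting $c(n,3)\ge 3n-6$. This buys a cleaner argument in which the list is consulted only once, for the single fact that its maximal contact value is $3n-6$; by contrast, the paper must also trust the list for the exposed-triangle verification, and its re-attachment step quietly assumes what your counting makes explicit. The trade-off is minor: the paper's exposed-triangle claim is a structural statement of some independent interest (it explains \emph{how} maximal clusters grow), whereas your argument is purely numerical; both proofs are equally conditional on the unproven completeness of the enumeration, as you correctly emphasize.
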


\begin{proof}
We introduce some terminology for finite sphere packings and their contact graphs. We say that any three pairwise touching spheres form a \textit{triangle}. A triangle is called an \textit{exposed triangle} if an additional sphere, not part of the original packing, can be brought in contact with all the three spheres in the triangle without overlapping with any sphere already in the packing. Triangles and exposed triangles can be equivalently defined in terms of contact graphs. 

\vspace{2mm}

\noindent \textit{Claim}: Any maximal contact graph on $n$ vertices with $4\leq n\leq 9$ has an exposed triangle and each vertex of such a graph has degree at least 3. 

\vspace{2mm}

By checking the list of all minimally rigid packings of $4\leq n\leq 9$ sphere given in \cite{ArMaBrArxiv} \footnote{The complete list (up to possible omissions due to round off errors) of minimally rigid packings of $n\leq 9$ spheres and a preliminary list of $n=10$ spheres appears on the arXiv \cite{ArMaBrArxiv}. The paper \cite{ArMaBr} only contains a partial list, so for the more complete list we refer to the arXiv version.} exhaustively, we see that the claim holds for all such sphere packings. We now proceed by induction on $n$. 

For $n=4$, there is only one maximal contact graph and for that Claim holds. Now suppose Claim holds for some $n\geq 4$. Consider any contact graph $G$ that has the largest number of contacts among all contact graphs having $n+1$ vertices. Let $v$ be a vertex of $G$.  

Suppose that $v$ has degree 2. Then $G-\{v\}$, the graph obtained by deleting $v$ and all edges incident to $v$ from $G$, must be a maximal contact graph on $n$ vertices, since if this is not the case, then replacing $G-\{v\}$ by a maximal contact graph $H$ on $n$ vertices and joining $v$ to any exposed triangle of $H$ produces a contact graph on $n+1$ vertices with strictly more contacts than $G$. But then $G-\{v\}$ has an exposed triangle and joining $v$ to that triangle produces a contact graph on $n+1$ vertices with strictly more contacts than $G$. This is a contradiction and so $v$ has degree at least 3. Thus $G$ is minimally rigid and has an exposed triangle. This completes the proof of Claim. 

Thus for $n=4,\ldots, 9$, the list of all maximal contact graphs coincides with the list of minimally rigid maximal contact graphs that have $3n-6$ contacts according to \cite{ArMaBr}. 
\end{proof}

We now describe the approach of Arkus et al. \cite{ArMaBr}. Note that since we are dealing with unit spheres (as stated in the opening of this section), the distance between the centers of two touching spheres is 2. Let $n\geq 4$ be a positive integer.

\vspace{4mm}

\noindent \textbf{Procedure 1 (\cite{ArMaBr}):}

\vspace{2mm}

\noindent \textit{Step1}: List the adjacency matrices of all nonisomorphic simple graphs with $n$ vertices and exactly $3n-6$ edges such that each vertex has degree at least 3. Let $\cal{A}$ be the set of all such adjacency matrices. In \cite{ArMaBr}, this step is performed using the graph isomorphism testing program \textit{nauty} and Sage package \textit{nice}. 

\vspace{2mm}

\noindent \textit{Step 2}: For each $A\in \cal{A}$, there is a corresponding simple graph $G_{A}$ with vertex set (say) $V=\{v_{1},\ldots, v_{n}\}$. Denote the $(i,j)$-entry of $A$ by $A_{ij}$ and consider each vertex $v_{i}$ of $G_{A}$ as a point $v_{i}=(x_{i},y_{i},z_{i})$ (the coordinates are yet unknown) in ${\mathbb{E}}^{3}$. Then $G_{A}$ is a contact graph if and only if we can place congruent spheres centered at the vertices of $G_{A}$ such that none of the spheres overlap and $A_{ij}=1$ implies that the spheres centered at $v_{i}$ and $v_{j}$ touch. Use the simple geometric elimination rules derived in \cite{ArMaBr} to remove a substantial number of adjacency matrices from $\cal{A}$ that cannot be realized into contact graphs. These geometric rules basically detect certain patterns that cannot occur in the adjacency matrices of contact graphs. Let the resulting set of adjacency matrices be denoted by $\cal{B}$.  

\vspace{2mm}

\noindent \textit{Step 3}: For any $A\in \cal{B}$, $G_{A}$ is a contact graph of a unit sphere packing if and only if for $i> j$ the system of nonlinear equations 
\begin{equation}\label{eq:arkus}
\begin{aligned}
D_{ij}^{2} = (x_{i}-x_{j})^{2}+(y_{i}-y_{j})^{2}+(z_{i}-z_{j})^{2} = 2, \ \ \ \  A_{ij} = 1,\\
D_{ij}^{2} = (x_{i}-x_{j})^{2}+(y_{i}-y_{j})^{2}+(z_{i}-z_{j})^{2} \geq 2, \ \ \ \  A_{ij} = 0.
\end{aligned}
\end{equation}
has a real solution. Note that we are only considering $i>j$ as the matrix $A$ is symmetric. Without loss of generality, we can assume that $x_{1}= y_{1}= z_{1}=0$ (the first sphere is centered at the origin); $y_{2}=z_{2}=0$ (the second sphere lies on the $x$-axis) and $z_{3}=0$ (the third sphere lies in the $xy$-plane). Therefore, we obtain a system with $\frac{n(n-1)}{2}$ constraints (of which $3n-6$ are equality constraints) in $3n-6$ unknowns. Here $D_{ij}$ is the distance between vertices $v_{i}$ and $v_{j}$. In \cite{ArMaBr}, for each $A\in\cal{B}$, the system \eqref{eq:arkus} is solved analytically for $n\leq 9$ and numerically for $n=10$. 

\vspace{2mm}

\noindent \textit{Step 4}: Form the distance matrix $D_{A}=[D_{ij}]$ and let $\cal{D}$ be the set of all distance matrices corresponding to valid contact graphs of packings of $n$ unit spheres. The contact number corresponding to any $D\in\cal{D}$ equals the number of entries of $D$ that equal 2 and lie above (equivalently below) the main diagonal. Note that, although we started with the adjacency matrices corresponding to exactly $3n-6$ contacts, solving system \eqref{eq:arkus} yields all distance matrices with $3n-6$ or more contacts. 

\vspace{4mm}

Since the geometric rules used in Step 2, are susceptible to round off errors, there is a possibility that some adjacency matrices are incorrectly eliminated from $\cal{A}$. Also for $n=10$, Newton's method was used to solve \eqref{eq:arkus} as the computational limit of analytical methods was reached for packings of 10 spheres. Thus the list of minimally rigid sphere packings provided in \cite{ArMaBr} could potentially be incomplete. As a result, the contact number $c(n,3)$ is still unknown for $n\geq 6$.\footnote{According to \cite{ArMaBr}, for $n\leq 7$, it is possible to solve the system \eqref{eq:arkus} using standard algebraic geometry methods for all $A\in\cal{A}$ without filtering by geometric rules. Arkus et al. \cite{ArMaBr} attempted this using the package \textit{SINGULAR}. Therefore, most likely for $n=6,7$, the maximal contact graphs as obtained in \cite{ArMaBr} are optimal for minimally rigid sphere packings.} Nevertheless, it is quite reasonable to conjecture the following.  

\begin{conjecture}
For $n\geq 6$, every contact graph of a packing of $n$ spheres with $c(n, 3)$ contacts is minimally rigid. Moreover, for $n=6,\ldots, 9$,
\[c(n, 3) = 3n-6.
\] 
\end{conjecture}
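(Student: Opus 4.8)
The plan is to separate the statement into its two parts and to treat them quite differently. The ``moreover'' clause, namely $c(n,3)=3n-6$ for $n=6,\ldots,9$, is already contained in Proposition~\ref{empirical}: under the standing assumption that \cite{ArMaBrArxiv,ArMaBr} enumerate all maximal contact minimally rigid packings of $n\le 9$ spheres, the inductive argument given there yields the equality together with a minimally rigid realization. So I would claim no new work for that half and simply invoke it. The real content is the first assertion, that for \emph{every} $n\ge 6$ each contact graph realizing $c(n,3)$ is minimally rigid, i.e.\ has minimum degree at least $3$ and at least $3n-6$ edges.

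First I would dispose of the edge count. Since $c(n,3)\ge 3n-6$ for all $n\ge 4$ --- take a regular tetrahedron and repeatedly glue a new sphere onto a free triangular face (the Boerdijk--Coxeter chain of face-sharing tetrahedra is a genuine non-overlapping packing), each step contributing three new contacts --- any contact graph with $c(n,3)$ edges automatically has at least $3n-6$ edges. Hence minimal rigidity reduces to the single claim that a contact-maximizing packing has no sphere touching only $0$, $1$, or $2$ others. The natural route is the induction already used for Proposition~\ref{empirical}: prove by induction on $n$ the strengthened statement that every maximal contact graph on $n$ vertices (a) has minimum degree at least $3$ and (b) contains an \emph{exposed triangle}. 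Granting both at level $n$, take a maximizer $G$ on $n+1$ vertices and a vertex $v$ of degree at most $2$; then $G-\{v\}$ must itself be a maximizer on $n$ vertices, for otherwise replacing it by a maximizer and reattaching $v$ to that maximizer's exposed triangle (property (b)) would yield strictly more than $c(n+1,3)$ contacts. But now (b) applied to $G-\{v\}$ lets us reattach $v$ to an exposed triangle of $G-\{v\}$, again beating $G$, a contradiction. This forces minimum degree at least $3$, which is property (a) at level $n+1$.

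The hard part --- and, I suspect, the reason the statement remains conjectural --- is property (b): proving \emph{without enumeration} that every contact-maximizing packing of $n$ spheres in $\mathbb{E}^{3}$ contains an exposed triangle. In the proof of Proposition~\ref{empirical} this is verified only by inspecting the finite lists of \cite{ArMaBrArxiv}, which is exactly what confines that argument to $n\le 9$. A geometric attack would begin from the convex hull of the centers: pick a center extremal in a generic direction, so that all spheres touching it lie in a closed halfspace, and try to exhibit among the triangles incident to the ``outward'' faces of the hull one whose outward apex position --- the fourth vertex of a regular tetrahedron of edge $2$ --- is unobstructed. The obstruction to this obstruction argument is that a sphere need not block an apex from outside; it may intrude from the side, and in three dimensions the combinatorics of which of the two apex positions of which boundary triangle are simultaneously blocked is delicate and resists a clean global bound. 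I would also flag a circularity hazard: minimal rigidity and the existence of exposed triangles are entangled, so one must check that the strengthened inductive hypothesis is genuinely re-provable at each level rather than quietly assumed. It is precisely the exposed-triangle half of the conclusion that the induction does \emph{not} supply on its own, and which must be established by an independent, dimension-aware geometric argument valid for all $n$.
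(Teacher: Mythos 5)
The statement you were asked to prove is, in the paper, a \emph{conjecture}: the authors offer no proof of it, and they formulate it precisely because no proof is available. Your proposal, to its credit, does not pretend otherwise --- you reduce the conjecture to the existence of an exposed triangle in every contact-maximizing packing, and then say plainly that this is the step you cannot establish for general $n$. That reduction, and the induction you build around it, is exactly the mechanism of the paper's Proposition~\ref{empirical}; the difference is that there the exposed-triangle property at each level $n\le 9$ is not proved but \emph{read off from the enumerated lists} of Arkus et al., which is why that argument stops at $n=9$ and why the general statement stays conjectural. So your diagnosis of where the difficulty lies is accurate and agrees with the paper's own framing.

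Two corrections are still needed. First, you claim the ``moreover'' clause ($c(n,3)=3n-6$ for $n=6,\ldots,9$) requires no new work because it is ``already contained in Proposition~\ref{empirical}.'' It is not: that proposition is conditional on the hypothesis that the lists of \cite{ArMaBrArxiv} and \cite{ArMaBr} are complete, a hypothesis the paper explicitly declines to endorse (the lists were produced using geometric elimination rules susceptible to round-off error and, for $n=10$, Newton's method). The conjecture is the \emph{unconditional} assertion, so invoking the conditional proposition proves nothing; this half of the conjecture is exactly as open as the other. Second, the circularity you flag in your own induction should be stated as a fatal defect rather than a hazard: the inductive step recovers property (a) (minimum degree at least $3$) at level $n+1$ from property (b) (exposed triangle) at level $n$, but it never re-establishes (b) at level $n+1$. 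In the paper, (b) is supplied at each level up to $9$ by the enumeration; for $n\ge 10$ nothing supplies it, so the induction cannot propagate even one step beyond the enumerated range. Your proposal is therefore not a proof but a correct identification of the open problem, together with a correct account of why the paper itself could not close it.
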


Hoy et al. \cite{Hoy} extended Procedure 1 to 11 spheres. However, they employ Newton's method, which cannot guarantee to obtain a solution, whenever one exists. Also they make the erroneous assumption that the contact graph of any minimally rigid sphere packing contains a Hamiltonian path. This assumption greatly reduces the number of adjacency matrices to be considered. However, Connelly, E. Demaine and M. Demaine showed this to be false \cite{HayBlog}, providing a counterexample with 16 vertices.

\subsection{Maximal contact rigid clusters}

Despite its intuitive significance, we have seen that minimal rigidity is neither sufficient nor necessary for rigidity. Holmes-Cerfon \cite{HC} developed a computational technique to potentially construct all rigid sphere packings of a small number of spheres. Her idea to consider rigid clusters comes from the intuition that in a physical system (like self-assembling colloids), a rigid cluster is more likely to form and survive than a non-rigid cluster. 

Some aspects of Holmes-Cerfon's method are similar to the empirical approaches described earlier. For instance, the mathematical formulation in terms of adjacency and distance matrices, and use of system \eqref{eq:arkus} to arrive at potential solutions remains unchanged. However, there are two fundamental differences. 

Obviously, one is the consideration of rigidity instead of minimal rigidity. This results in the removal of the restriction that the contact graph should have $3n-6$ edges. Instead, any solution obtained is tested for rigidity. 

The second major difference lies in the way all potential solutions are reached. In the previous approaches, the method involved an exhaustive adjacency matrix search followed by filtering through some geometrical rules. Here the procedure starts with a single rigid packing $\cal{P}$ of $n$ spheres and attempts to generate all other rigid packings of $n$ spheres as follows: Break an existing contact in $\cal{P}$ by deleting an equation from \eqref{eq:arkus}. This usually leads to a single internal degree of freedom that results in a one-dimensional solution set. When this happens, one can follow the one-dimensional path numerically until another contact is formed, typically resulting in another rigid packing \cite{HC}. 

The paper \cite{HC} provides a preliminary list for all rigid sphere packings of up to 14 spheres and maximal contact packings of up to 19 spheres. However, the use of numerical methods and approximations throughout means that the list is potentially incomplete.

\section{Digital and totally separable sphere packings for $d=2, 3$}\label{sec:polyomino}
\begin{figure}[ht] 
\centering
\begin{tikzpicture}[scale=0.75]
\draw[fill=black!30] (1.4,4) -- (3.4,4) -- (4.8,5) -- (2.8,5) -- cycle;
\draw[fill=black!30] (3.4,1) -- (3.4,4) -- (4.8,5) -- (4.8,2) -- cycle;
\draw[fill=black!30] (3.4,1) -- (3.4,4) -- (4.8,5) -- (4.8,2) -- cycle;
\draw[fill=black!60] (1.4,1) -- (3.4,1) -- (3.4,4) -- (1.4,4) -- cycle;
\draw[fill=black!60] (0.7,0.5) -- (1.7,0.5) -- (1.7,1.5) -- (0.7,1.5) -- cycle;
\draw[fill=black!30] (1.7,0.5) -- (1.7,1.5) -- (2.4,2) -- (2.4, 1) -- cycle;
\draw[fill=black!30] (0.7,1.5) -- (1.7,1.5) -- (2.4,2) -- (1.4,2) -- cycle;

\draw (1.4,2) -- (3.4,2);
\draw (1.4,3) -- (3.4,3);
\draw (3.4,2) -- (4.8,3);
\draw (3.4,3) -- (4.8,4);
\draw (2.4,2) -- (2.4,4);
\draw (2.4,4) -- (3.8,5);
\draw (4.1,1.5) -- (4.1,4.5);
\draw (4.1,4.5) -- (2.1,4.5);

\draw[fill=black!30] (8.4,4) -- (10.4,4) -- (11.8,5) -- (9.8,5) -- cycle;
\draw[fill=black!30] (10.4,1) -- (10.4,4) -- (11.8,5) -- (11.8,2) -- cycle;
\draw[fill=black!30] (10.4,1) -- (10.4,4) -- (11.8,5) -- (11.8,2) -- cycle;
\draw[fill=black!60] (8.4,1) -- (10.4,1) -- (10.4,4) -- (8.4,4) -- cycle;
\draw[fill=black!60] (7.7,0.5) -- (8.7,0.5) -- (8.7,1.5) -- (7.7,1.5) -- cycle;
\draw[fill=black!30] (8.7,0.5) -- (8.7,1.5) -- (9.4,2) -- (9.4, 1) -- cycle;
\draw[fill=black!30] (7.7,1.5) -- (8.7,1.5) -- (9.4,2) -- (8.4,2) -- cycle;

\draw (8.4,2) -- (10.4,2);
\draw (8.4,3) -- (10.4,3);
\draw (10.4,2) -- (11.8,3);
\draw (10.4,3) -- (11.8,4);
\draw (9.4,2) -- (9.4,4);
\draw (9.4,4) -- (10.8,5);
\draw (11.1,1.5) -- (11.1,4.5);
\draw (11.1,4.5) -- (9.1,4.5);

\draw (0,0) rectangle (5,5);
\foreach \i in {0,...,4} 
\foreach \j in {0,...,4}
{
\draw (\i,\j) rectangle (\i+1,\j+1);
}

\draw (0.7,0.5) rectangle (5.7,5.5);
\foreach \i in {0.7,...,4.7} 
\foreach \j in {0.5,...,4.5}
{
\draw (\i,\j) rectangle (\i+1,\j+1);
}

\draw (1.4,1) rectangle (6.4,6);
\foreach \i in {1.4,...,5.4} 
\foreach \j in {1,...,5}
{
\draw (\i,\j) rectangle (\i+1,\j+1);
}

\draw (2.1,1.5) rectangle (7.1,6.5);
\foreach \i in {2.1,...,6.1} 
\foreach \j in {1.5,...,5.5}
{
\draw (\i,\j) rectangle (\i+1,\j+1);
}

\draw (2.8,2) rectangle (7.8,7);
\foreach \i in {2.8,...,6.8} 
\foreach \j in {2,...,6}
{
\draw (\i,\j) rectangle (\i+1,\j+1);
}

\foreach \i in {0,...,5} 
\foreach \j in {0,...,5}
{
\draw (\i,\j) -- (\i+2.8,\j+2);
}
\shade[ball color=white!70!black] (13.6,4.4) circle (.6cm);
\shade[ball color=white!70!black] (14.8,4.4) circle (.6cm);

\shade[ball color=white!70!black] (13.6,3.2) circle (.6cm);
\shade[ball color=white!70!black] (14.8,3.2) circle (.6cm);

\shade[ball color=white!70!black] (13.6,2) circle (.6cm);
\shade[ball color=white!70!black] (14.8,2) circle (.6cm);

\shade[ball color=white!70!black] (13,3.9) circle (.6cm);
\shade[ball color=white!70!black] (14.2,3.9) circle (.6cm);

\shade[ball color=white!70!black] (13,2.7) circle (.6cm);
\shade[ball color=white!70!black] (14.2,2.7) circle (.6cm);

\shade[ball color=white!70!black] (13,1.5) circle (.6cm);
\shade[ball color=white!70!black] (14.2,1.5) circle (.6cm);

\shade[ball color=white!70!black] (12.4,1) circle (.6cm);

\end{tikzpicture}
\caption{A polyomino of volume 13 and the corresponding digital packing of 13 spheres.}\label{fig:polyominoes}
\end{figure}
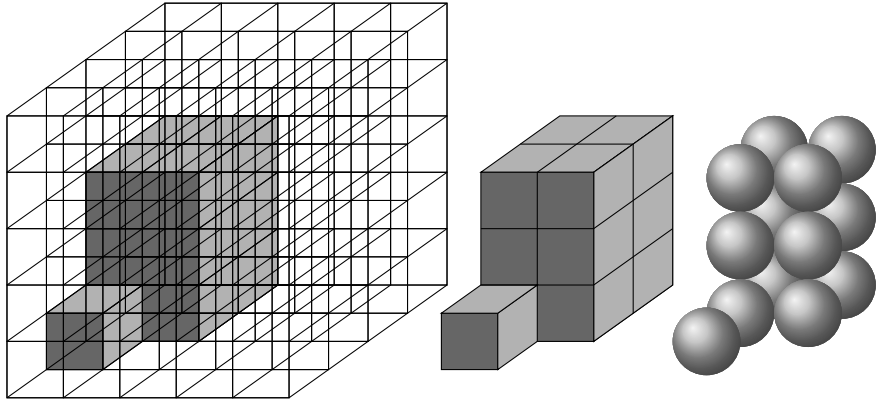
In this section, we use the terms `cube', `sphere' and `ball' to refer to two and three dimensional objects of these types. Consider the 3-dimensional (resp. 2-dimensional) integer lattice ${\mathbb{Z}}^{3}$ (resp. ${\mathbb{Z}}^{2}$), which can be thought of as an infinite space tiling array of unit cubes called \textit{lattice cells}. For convenience, we imagine these cubes to be centered at the integer points, rather than having their vertices at these points. Two lattice cells are \textit{connected} if they share a facet. 

We refer to a packing of congruent unit diameter spheres centered at the points of ${\mathbb{Z}}^{3}$ (resp. ${\mathbb{Z}}^{2}$) as a \textit{digital sphere packing}. These packings provide a natural means for generating totally separable sphere packings. We denote the maximal contact number of such a digital packing of $n$ spheres by $c_{{\mathbb{Z}}}(n, 3)$ (resp. $c_{{\mathbb{Z}}}(n,2)$). Clearly, $c_{{\mathbb{Z}}}(n,2)\leq c_{\rm sep}(n, 2)$ and $c_{{\mathbb{Z}}}(n,3)\leq c_{\rm sep}(n, 3)$. The question is how large the maximum digital contact number can be and whether it equals the corresponding maximum contact number of totally separable sphere packings. 

A 3-dimensional (resp. 2-dimensional) \textit{polyomino} is a finite collection of connected lattice cells of ${\mathbb{Z}}^{3}$ (resp. ${\mathbb{Z}}^{2}$). Considering the maximum volume ball contained in a cube, each polyomino corresponds to a digital sphere (circle) packing and vice versa. Moreover, since the ball (circle) intersects the cube (square) at 6 points (4 points), one on each facet, it follows that the number of facets shared between the cells of the polyomino equals the contact number of the corresponding digital packing. Figure \ref{fig:polyominoes} shows a portion of the cubic lattice centered at the points of ${\mathbb{Z}}^{3}$, a 3-dimensional polyomino and the digital sphere packing corresponding to that polyomino.

It is easy to see that minimizing the surface area (resp., perimeter) of a 3-dimensional (resp., 2-dimensional) polyomino of volume $n$ corresponds to finding the maximum contact number of a digital packing of $n$ spheres. Harary and Harborth \cite{HaHa} studied the problem of finding isoperimetric polyominoes of area $n$ in 2-space. Their key insight was that $n$ squares can be arranged in a square-like arrangement so as to minimize the perimeter of the resulting polyomino. The same construction appears in \cite{AlCe}, but without referencing \cite{HaHa}.  The 3-dimensional case has a similar solution which first appeared in \cite{AlCe}. The proposed arrangement consists of forming a quasi-cube (an orthogonal box with one or two edges deficient by at the most one unit) followed by attaching as many of the remaining cells as possible in the form of a quasi-square layer. The rest of the cells are then attached to the quasi-cube in the form of a row. The main results of \cite{HaHa} and \cite{AlCe} on isoperimetric polyominoes in ${\mathbb{E}}^{2}$ and ${\mathbb{E}}^{3}$ can be used to derive the following about the maximum digital contact numbers (see \cite{BeSzSz}).   

\begin{theorem}\label{polyomino}
Given $n\ge 2$, we have 

\item(i) \hskip0.6cm $c_{{\mathbb{Z}}}(n,2)=\lfloor 2n- 2\sqrt{n}\rfloor.$ 

\item(ii) \hskip0.5cm $c_{{\mathbb{Z}}}(n,3)= 3n- 3n^{\frac{2}{3}} - o(n^{\frac{2}{3}}).$ 

\end{theorem}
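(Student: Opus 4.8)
The plan is to convert the problem into the isoperimetric language for polyominoes established above and then feed in the known extremal results. Recall that a digital packing of $n$ spheres corresponds to an $n$-cell polyomino, with the contact number equal to the number of shared facets. The first step is a double-counting identity relating the contact number to the boundary size. In the plane, each of the $n$ cells carries $4$ unit edges, so there are $4n$ cell--edge incidences; an internal (shared) edge is incident to two cells while a boundary edge is incident to one, whence
\[
4n = 2\,c_{\mathbb{Z}}(n,2) + P,
\]
where $P$ is the perimeter of the polyomino. Thus $c_{\mathbb{Z}}(n,2) = 2n - \tfrac12 P$, so maximizing the contact number is precisely minimizing the perimeter. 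The same bookkeeping with $6$ faces per cube gives $6n = 2\,c_{\mathbb{Z}}(n,3) + S$ and hence $c_{\mathbb{Z}}(n,3) = 3n - \tfrac12 S$, where $S$ is the surface area.

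For part (i) I would quote the Harary--Harborth theorem \cite{HaHa}: the minimum perimeter of an $n$-cell polyomino equals $2\lceil 2\sqrt{n}\,\rceil$, attained by the square-like arrangement. Substituting the minimal $P$ into the identity above yields $c_{\mathbb{Z}}(n,2) = 2n - \lceil 2\sqrt{n}\,\rceil$. Since $2n$ is an integer and $\lfloor m - x\rfloor = m - \lceil x\rceil$ for integral $m$, this equals $\lfloor 2n - 2\sqrt{n}\rfloor$, which is the claimed closed form. This half of the theorem is therefore exact once the extremal perimeter is in hand.

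For part (ii) the required input is the three-dimensional isoperimetric result of \cite{AlCe}, proved via the quasi-cube together with a quasi-square layer and a residual row: the minimum surface area of a volume-$n$ polyomino is $6n^{2/3} + o(n^{2/3})$, the leading term arising from a near-cube of side $n^{1/3}$. Inserting $S = 6n^{2/3} + o(n^{2/3})$ into $c_{\mathbb{Z}}(n,3) = 3n - \tfrac12 S$ produces $c_{\mathbb{Z}}(n,3) = 3n - 3n^{2/3} - o(n^{2/3})$, as stated; here only the leading-order asymptotics of $S$ is needed, so no exact formula is expected.

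The genuine content of the theorem resides entirely in the two isoperimetric inequalities, and the main obstacle is establishing them sharply --- the exact planar perimeter bound and, especially, the correct leading constant $6$ for the three-dimensional surface area, where the extremal shapes are near-cubes rather than exact boxes. Both of these I would cite from \cite{HaHa} and \cite{AlCe} rather than reprove; the remaining double-counting identities and the floor/ceiling manipulation are elementary.
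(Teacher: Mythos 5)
Your proposal is correct and takes essentially the same route as the paper: the paper likewise identifies digital packings with polyominoes, observes that maximizing contacts amounts to minimizing the perimeter (resp.\ surface area), and then invokes the sharp isoperimetric results of \cite{HaHa} and \cite{AlCe}. The double-counting identity you spell out is exactly the relation ${\rm svol}_{d-1}(\mathbf{P})=2dn-2c_{{\mathbb{Z}}}(n,d)$ that the paper uses explicitly in its proof of Theorem~\ref{dD-cubic}, so your write-up merely makes explicit the bookkeeping the paper leaves implicit.
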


We now turn to the more general totally separable sphere packings in ${\mathbb{E}}^{2}$ and ${\mathbb{E}}^{3}$. The contact number problem for such packings was discussed in the very recent paper \cite{BeSzSz}.

\begin{theorem}\label{BeSzSz}
For all $n\ge 2$, we have 

\item(i) \hskip0.6cm $c_{\rm sep}(n,2)=\lfloor 2n- 2\sqrt{n}\rfloor$. 

\item(ii) \hskip0.5cm $3n- 3n^{\frac{2}{3}} - o(n^{\frac{2}{3}}) \leq c_{\rm sep}(n,3)<3n-1.346n^{\frac{2}{3}}$. 
\end{theorem}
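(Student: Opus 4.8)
The two lower bounds require no new work. A digital packing, with centres in $\mathbb{Z}^{d}$, is automatically totally separable, the separating hyperplanes being the coordinate grid hyperplanes placed halfway between adjacent lattice points; hence $c_{\mathbb{Z}}(n,d)\le c_{\rm sep}(n,d)$, exactly as noted before Theorem~\ref{polyomino}. Substituting the sharp planar value and the three-dimensional estimate of Theorem~\ref{polyomino} gives $c_{\rm sep}(n,2)\ge\lfloor 2n-2\sqrt n\rfloor$ and $c_{\rm sep}(n,3)\ge 3n-3n^{2/3}-o(n^{2/3})$, which are the lower bounds in (i) and (ii). All the difficulty lies in the two upper bounds.

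The common first step is a local separability lemma. If two unit balls touch, the \emph{only} hyperplane separating them while avoiding both interiors is the common tangent hyperplane at the contact point, orthogonal to the segment joining the centres. Feeding this into the requirement that \emph{every} pair in the packing be separable, one establishes the separable analogue of the kissing number: at most $4$ balls touch a given ball when $d=2$ and at most $6$ when $d=3$, the extremal configurations being the $\pm e_i$ directions. Writing $\deg_i$ for the number of balls touching the $i$-th ball and $E$ for the contact number, the handshake identity becomes
\begin{equation*}
E=dn-\tfrac12\sum_{i=1}^{n}\bigl(2d-\deg_i\bigr),
\end{equation*}
so the leading coefficient ($2n$ for $d=2$, $3n$ for $d=3$) is already accounted for, and everything reduces to bounding the total \emph{deficiency} $D:=\sum_i(2d-\deg_i)\ge 0$ from below by the correct surface term.

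For the plane I would prove the sharp isoperimetric inequality $D\ge 4\sqrt n$; then $E=2n-\tfrac12 D\le 2n-2\sqrt n$, and since an integer bounded above by a real number is bounded by its floor, $E\le\lfloor 2n-2\sqrt n\rfloor$, matching the lower bound and yielding (i). Here $D$ plays the role of a perimeter, each missing contact being a boundary edge. For digital packings this is precisely the polyomino isoperimetric inequality of Harary--Harborth \cite{HaHa} underlying Theorem~\ref{polyomino}, so the task is to transfer that inequality to arbitrary totally separable packings: the plan is to use the tangent separating lines to split the contacts into two transversal families and to bound the number of open ends of these families by a projection/area estimate. The main obstacle is that, unlike in the square lattice, the contact chains of a separable packing need not be straight, so the isoperimetric inequality must be proved with no rigid grid to lean on.

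In three dimensions the upper bound is not expected to be tight, since the lower-order term $3n^{2/3}$ of the lower bound does not meet the claimed $1.346\,n^{2/3}$, and I would bound $D$ by the volume method of \cite{Be12,BeRe} rather than by a purely combinatorial count. Assigning to each centre its Voronoi cell, total separability sharpens the local density estimate so that interior cells carry volume comparable to the cube cell of the densest separable arrangement, while $D$ is controlled from below by the surface area of the outer cells; the classical isoperimetric inequality for a body of volume $\Theta(n)$ then forces a surface term of order $n^{2/3}$, and optimizing the resulting estimate produces the explicit constant. The strict inequality reflects that the isoperimetric optimum, a round ball, is never realized by a packing. I expect extracting the sharp constant $1.346$ to be the principal difficulty: it demands a careful, separability-adapted version of the cap/Voronoi volume argument that produced the constant $0.926$ in \cite{BeRe}, and it is exactly this step whose imprecision leaves the gap with the digital lower bound unresolved.
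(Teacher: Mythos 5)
Your lower bounds are correct and coincide with the paper's: a digital packing is totally separable, so Theorem~\ref{polyomino} immediately gives $c_{\rm sep}(n,2)\ge\lfloor 2n-2\sqrt n\rfloor$ and $c_{\rm sep}(n,3)\ge 3n-3n^{2/3}-o(n^{2/3})$. Your local separability lemma is also correct and is a genuine ingredient: for two touching unit balls the only admissible separating hyperplane is the common tangent plane at the contact point, so the contact directions at a fixed ball have pairwise non-positive inner products, hence at most $2d$ neighbours per ball; with the handshake identity this correctly isolates the deficiency $D=\sum_i(2d-\deg_i)$ as the quantity to bound from below.

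The problem is that the two upper bounds --- which, as you say yourself, is where all the difficulty lies --- are precisely where your proposal stops being a proof. For (i), the inequality $D\ge 4\sqrt n$ for an \emph{arbitrary} totally separable packing is not a lemma you may defer: it is equivalent to $c_{\rm sep}(n,2)\le 2n-2\sqrt n$, i.e., to the theorem itself. Your plan (split the contacts into two transversal families, bound the open ends by a projection estimate) is never executed, and you concede the exact obstruction --- contact chains of a separable packing need not be straight, so the Harary--Harborth polyomino inequality \cite{HaHa} does not transfer for free. The paper takes a different and complete route here: part (i) is proved by a modification of Harborth's inductive argument \cite{Ha} for Theorem~\ref{Harborth}, with details in \cite{BeSzSz}, not by a deficiency/isoperimetric count. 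For (ii), you never derive the constant $1.346$; you only state that a separability-adapted version of the Voronoi/cap volume method of \cite{Be12, BeRe} should yield it, and you flag that extraction of the constant is ``the principal difficulty.'' That difficulty is the content of the bound: the argument of \cite{BeSzSz} must exploit input specific to total separability (compare the role of the densest totally separable packing, known in dimension three by Kert\'esz \cite{K88}), and without carrying out that computation the strict inequality $c_{\rm sep}(n,3)<3n-1.346\,n^{2/3}$ remains an assertion. In short: the easy half of the theorem (the lower bounds) is done correctly and exactly as in the paper; the hard half is a research plan, not a proof.
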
 

Part (i) follows from a natural modification of Harborth's proof \cite{Ha} of Theorem \ref{Harborth} (for details see \cite{BeSzSz}). The lower bound in (ii) comes from the fact that every digital sphere packing is totally separable. However, proving the upper bound in (ii) is more involved. 

Theorem \ref{BeSzSz} can be used to generate the following analogues of relations \eqref{asymp-planar} and \eqref{Be-Re}. 

\begin{equation}\label{eq:asymp-planar-sep}
\lim_{n\to +\infty}\frac{2n-c_{\rm sep} (n,2)}{\sqrt{n}}=2\ . 
\end{equation}

\begin{equation}\label{eq:Be-Re-sep}
1.346< \frac{3n-c_{\rm sep}(n,3)}{n^{\frac{2}{3}}}\leq 3 + o(1)\ .
\end{equation} 

Since the bounds in \eqref{eq:Be-Re-sep} are tighter than \eqref{Be-Re}, it is reasonable to conjecture that the limit of $\frac{3n-c_{\rm sep}(n,3)}{n^{2/3}}$ exists as $n\to +\infty$. In fact, it can be asked if this limit equals 3. Furthermore, a comparison of Theorem \ref{polyomino} and Theorem \ref{BeSzSz} shows that $c_{\rm sep}(n,2) = c_{{\mathbb{Z}}}(n,2)$ holds for all positive integers $n$. Therefore, it is natural to raise the following open problem. 

\begin{problem}\label{sep=digital}
Show that 
\[\lim_{n\to +\infty} \frac{3n-c_{\rm sep}(n,3)}{n^{\frac{2}{3}}} = 3\ .
 \]
Moreover, is it the case that $c_{\rm sep}(n,3) = c_{{\mathbb{Z}}}(n,3)$, for all positive integers $n$? If not, then characterize those values of $n$ for which this holds. 
\end{problem}

\section{On largest contact numbers in higher dimensional spaces}\label{sec:d-dim}

In this section, we study the contact number problem in ${\mathbb{E}}^{d}$, both for packings of ${\mathbf B}^{d}$ and translates of an arbitrary $d$-dimensional convex body ${\mathbf K}$.    

\subsection{Packings by translates of a convex body}
One of the main results of this section is an upper bound for the number of touching pairs in an arbitrary finite packing of translates of a convex body, proved in \cite{B02}. 
In order to state the theorem in question in a concise way we need a bit of notation. Let $\mathbf{K}$ be an arbitrary convex body in $\mathbb{E}^{d}$, $d\geq 3$. Then let $\delta(\mathbf{K})$ denote the density of a densest packing of translates of the convex body $\mathbf{K}$ in $\mathbb{E}^{d}$, $d\geq 3$. Moreover, let $$\text{iq}(\mathbf{K}):=\frac{\left(\text{svol}_{d-1}(\text{bd}\mathbf{K})\right)^d}{\left(\text{vol}_d(\mathbf{K})\right)^{d-1}}$$
be the isoperimetric quotient of the convex body $\mathbf{K}$, where $\text{svol}_{d-1}(\text{bd}\mathbf{K})$ denotes the $(d-1)$-dimensional surface volume of the boundary $\text{bd}\mathbf{K}$ of $\mathbf{K}$ and $\text{vol}_d(\mathbf{K})$ denotes the $d$-dimensional volume of
$\mathbf{K}$. Furthermore, let $H(\mathbf{K})$ denote the Hadwiger number of $\mathbf{K}$, which is the largest number of non-overlapping translates of $\mathbf{K}$ that can all touch $\mathbf{K}$.
An elegant observation of Hadwiger \cite{H} is that $H(\mathbf{K})\leq 3^d-1$, where equality holds if and only if  $\mathbf{K}$ is an affine $d$-cube.
Finally, let the one-sided Hadwiger number $h(\mathbf{K} )$ of $\mathbf{K}$ be the largest number of non-overlapping translates of $\mathbf{K}$ that touch $\mathbf{K}$ and that all lie in a closed supporting halfspace of $\mathbf{K}$. In \cite{BB}, using the Brunn--Minkowski inequality, it is proved that $h(\mathbf{K} )\leq 2\cdot 3^{d-1}-1$, where equality is attained if and only if $\mathbf{K}$ is an affine $d$-cube.
Let $\mathbf{K_o}:=\frac{1}{2}(\mathbf{K}+(-\mathbf{K}))$ be the normalized (centrally symmetric) difference body assigned to $\mathbf{K}$.

\begin{theorem}\label{15}
Let $\mathbf{K}$ be an arbitrary convex body in $\mathbb{E}^{d}$, $d\geq 3$. Then
\begin{align*}
c({\bf K},n,d) &\leq  \frac{H(\mathbf{K_o})}{2}\, n-\frac{1}{2^{d}\delta(\mathbf{K_o})^{\frac{d-1}{d}}}\sqrt[d]{\frac{\mathrm{iq}(\mathbf{B}^d)}{\mathrm{iq}(\mathbf{K_o})}}\; n^{\frac{d-1}{d}}-(H(\mathbf{K_o})-h(\mathbf{K_o})-1) \\
&\leq \frac{3^{d}-1}{2}\, n-\frac{\sqrt[d]{\omega_{d}}}{2^{d+1}}\: n^{\frac{d-1}{d}},  
\end{align*}
where $\omega_d=\frac{\pi^{\frac{d}{2}}}{\Gamma (\frac{d}{2}+1) }={\rm vol}_d(\mathbf{B}^d)$.
\end{theorem}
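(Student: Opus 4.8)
The strategy is to prove the first (body-dependent) bound and then read off the second as its clean, universal form. The first bound is assembled from three ingredients: a reduction to the centrally symmetric case (which explains why every quantity is attached to $\mathbf{K_o}$), a trivial degree count that produces the leading linear term $\tfrac{H(\mathbf{K_o})}{2}\,n$, and an isoperimetric/density estimate that produces the negative $n^{(d-1)/d}$ correction; the additive constant comes from a one-sided argument at an extreme translate.

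\textbf{Reduction.} First I would record that two translates $\mathbf{K}+\mathbf{x}$ and $\mathbf{K}+\mathbf{y}$ have overlapping interiors exactly when $\mathbf{x}-\mathbf{y}\in\mathrm{int}(2\mathbf{K_o})$ and touch exactly when $\mathbf{x}-\mathbf{y}\in\mathrm{bd}(2\mathbf{K_o})$, since $\mathbf{K}-\mathbf{K}=2\mathbf{K_o}$. Consequently the contact graph of a packing of translates of $\mathbf{K}$ is identical to that of the corresponding packing of translates of $\mathbf{K_o}$, so $c(\mathbf{K},n,d)=c(\mathbf{K_o},n,d)$, and I may assume the body is the centrally symmetric $\mathbf{K_o}$. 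In these terms the centers form a packing of unit balls in the norm with unit ball $\mathbf{K_o}$, touching pairs being those at distance exactly $2$.

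\textbf{Main estimate.} Writing $d_i$ for the degree of the $i$-th translate, we have $2c(\mathbf{K_o},n,d)=\sum_i d_i$ and $d_i\le H(\mathbf{K_o})$, which already gives the leading term. To extract the correction I would bound the total deficiency $H(\mathbf{K_o})\,n-2c=\sum_i\bigl(H(\mathbf{K_o})-d_i\bigr)$ from below by a surface quantity. Assign to each translate its Dirichlet--Voronoi cell; by the definition of $\delta(\mathbf{K_o})$ the union $\Omega$ of these cells (a region containing the packing) satisfies $\mathrm{vol}_d(\Omega)\ge n\,\mathrm{vol}_d(\mathbf{K_o})/\delta(\mathbf{K_o})$. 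Every cell facet that is \emph{not} a contact facet lies on the free outer boundary of the packing, and a missing contact contributes a controlled amount of free boundary area, so the total free surface area is at most a constant multiple of $\sum_i(H(\mathbf{K_o})-d_i)$. The isoperimetric inequality bounds this same free surface area from below by $c_d\,\mathrm{vol}_d(\Omega)^{(d-1)/d}$, and the factor $2^{-d}$ (from $\mathbf{K}-\mathbf{K}=2\mathbf{K_o}$, whose surface scales by $2^{d-1}$) together with the ratio $\mathrm{iq}(\mathbf{B}^d)/\mathrm{iq}(\mathbf{K_o})$ is exactly what converts the ball-isoperimetric constant into the $\mathbf{K_o}$-adapted one. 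Chaining the two estimates gives $H(\mathbf{K_o})\,n-2c\ge \tfrac{1}{2^{d-1}\delta(\mathbf{K_o})^{(d-1)/d}}\sqrt[d]{\mathrm{iq}(\mathbf{B}^d)/\mathrm{iq}(\mathbf{K_o})}\;n^{(d-1)/d}$, i.e.\ the claimed $n^{(d-1)/d}$ term after dividing by $2$. For the additive term $-(H(\mathbf{K_o})-h(\mathbf{K_o})-1)$ I would single out a translate extreme in a fixed direction $\mathbf{u}$: all its neighbours have centers in the closed supporting halfspace below it, so its degree is at most $h(\mathbf{K_o})$, contributing an extra deficiency of at least $H(\mathbf{K_o})-h(\mathbf{K_o})$ that is booked separately against the surface count. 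Finally the second inequality is the universal form, obtained by inserting the extremal values $H(\mathbf{K_o})\le 3^d-1$ (Hadwiger), $\delta(\mathbf{K_o})\le1$, $H(\mathbf{K_o})-h(\mathbf{K_o})-1\ge0$, and the isoperimetric inequality $\mathrm{iq}(\mathbf{K_o})\ge\mathrm{iq}(\mathbf{B}^d)=d^d\omega_d$; indeed the coefficient $\tfrac{\sqrt[d]{\omega_d}}{2^{d+1}}$ is precisely the value of the general coefficient at the extremal affine cube, where $H=3^d-1$, $\delta=1$, and $\mathrm{iq}=(2d)^d$.

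\textbf{Main obstacle.} The delicate step is the middle one: matching each unit of contact deficiency $H(\mathbf{K_o})-d_i$ to a definite amount of free boundary area of $\Omega$, so that the per-body bookkeeping and the global isoperimetric inequality combine with the \emph{exact} constants $2^{-d}$, $\delta(\mathbf{K_o})^{-(d-1)/d}$, and the isoperimetric-quotient ratio. Making this accounting tight, rather than losing constant factors, is what forces the simultaneous use of the Dirichlet--Voronoi cells, the density $\delta(\mathbf{K_o})$, and the sharp isoperimetric inequality, and I would expect it to be the technical heart of the argument.
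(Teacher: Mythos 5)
Your high-level architecture (reduce to the difference body $\mathbf{K_o}$, count degrees against the Hadwiger number, bound the total deficiency from below by a density-plus-isoperimetry argument, and use an extreme translate for the additive constant) does match the proof of this theorem in \cite{B02}; note the survey itself does not reprove it, but it quotes the decisive ingredient as Theorem~\ref{Bezdek-15}. Exactly at that ingredient your sketch has a genuine gap: the union $\Omega$ of the Dirichlet--Voronoi cells of a \emph{finite} packing is all of $\mathbb{E}^{d}$ (the outer cells are unbounded), so your inequality $\vol_d(\Omega)\ge n\vol_d(\mathbf{K_o})/\delta(\mathbf{K_o})$ is vacuous and cannot be combined with an isoperimetric inequality; moreover no such bound holds ``by the definition of $\delta(\mathbf{K_o})$'', since $\delta$ is defined through infinite packings. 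The correct container is the non-convex set $\bigcup_{i=1}^n(\mathbf{c}_i+2\mathbf{K_o})$, and the estimate $n\vol_d(\mathbf{K_o})\le\delta(\mathbf{K_o})\,\vol_d\bigl(\bigcup_{i=1}^n(\mathbf{c}_i+2\mathbf{K_o})\bigr)$ is precisely Theorem~\ref{Bezdek-15}, which needs a real argument (periodic extension of the finite packing) and which the paper stresses strengthens \cite{BHW94} because the container is highly non-convex. Your charging scheme is also broken: the claim ``every cell facet that is not a contact facet lies on the free outer boundary'' is false (two Voronoi-adjacent translates at distance greater than $2$ share an interior facet), and the needed lemma is different: if $\mathbf{p}\in\bd\bigl(\bigcup_i(\mathbf{c}_i+2\mathbf{K_o})\bigr)\cap\bd(\mathbf{c}_i+2\mathbf{K_o})$, then $\mathbf{p}+\mathbf{K_o}$ can be adjoined to the packing while touching $\mathbf{c}_i+\mathbf{K_o}$, so by the definition of $H(\mathbf{K_o})$ that translate has degree at most $H(\mathbf{K_o})-1$; counting such boundary translates against $\svol_{d-1}(\bd(2\mathbf{K_o}))=2^{d-1}\svol_{d-1}(\bd\mathbf{K_o})$ is what yields your displayed $n^{(d-1)/d}$ term. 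Finally, one extreme translate gives only $\tfrac12(H(\mathbf{K_o})-h(\mathbf{K_o})-1)$ after halving the deficiency sum; you need the two translates extreme in opposite directions, and the $-1$ is exactly the correction for the fact that these are already counted once among the boundary translates, which your ``booked separately'' glosses over.

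The most serious error is your derivation of the second, universal inequality. Since $\mathrm{iq}(\mathbf{K_o})$ sits in the denominator of the negative term, you need an \emph{upper} bound on $\mathrm{iq}(\mathbf{K_o})$, but you invoke the classical isoperimetric inequality $\mathrm{iq}(\mathbf{K_o})\ge\mathrm{iq}(\mathbf{B}^d)$, which goes the wrong way. Worse, no universal upper bound exists: $\mathrm{iq}$ is not affine-invariant, and for a thin box ($1\times\dots\times1\times\epsilon$) one has $H(\mathbf{K_o})=3^d-1$, $\delta(\mathbf{K_o})=1$, yet $\mathrm{iq}(\mathbf{K_o})\to\infty$ as $\epsilon\to 0$, so the first expression exceeds the second for large $n$ and the chain cannot be established body-by-body as you propose. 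The missing idea is to exploit that $c(\mathbf{K},n,d)$, $H$, $h$ and $\delta$ are affine-invariant while $\mathrm{iq}$ is not: first replace $\mathbf{K_o}$ by an affine image minimizing its isoperimetric quotient, and then apply the \emph{reverse} isoperimetric inequality of Ball for $\mathbf{o}$-symmetric convex bodies (see \cite{Ba97}), which guarantees an affine position with $\mathrm{iq}\le(2d)^d=\mathrm{iq}(\text{cube})$; combined with $\delta(\mathbf{K_o})\le 1$, $H(\mathbf{K_o})\le 3^d-1$ and $H(\mathbf{K_o})-h(\mathbf{K_o})-1\ge 0$, this produces exactly the coefficient $\tfrac{\sqrt[d]{\omega_d}}{2^{d+1}}$. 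Your observation that the cube is the equality case is correct intuition, but without the affine normalization and the reverse isoperimetric inequality the universal bound does not follow from the first one.
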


Since for the most part we are interested in contact numbers of sphere packings, it would be interesting to see the form Theorem~\ref{15} takes when $\mathbf{K}=\mathbf{B}^{d}$. Recall that $k(d)$ denotes the kissing number of a unit ball in ${\mathbb{E}}^{d}$. Let $\delta_{d}$ stand for the largest possible density for (infinite) packings of unit balls in ${\mathbb{E}}^{d}$. The following consequence of Theorem~\ref{15} was reported in \cite{Be12}. 
 
\begin{corollary}\label{d-sphere}
Let $n > 1$ and $d \geq 3$ be positive integers. Then 
\[c(n,d) < \frac{1}{2}k(d)\ n - \frac{1}{2^d} \delta_{d}^{-\frac{d-1}{d}}n^{\frac{d-1}{d}}. 
\]
\end{corollary}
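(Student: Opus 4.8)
The plan is to obtain Corollary~\ref{d-sphere} as the direct specialization of the \emph{first} inequality in Theorem~\ref{15} to the case $\mathbf{K}=\mathbf{B}^{d}$, since every geometric quantity appearing there collapses for the ball.

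First I would identify the difference body. Because $\mathbf{B}^{d}$ is centrally symmetric we have $-\mathbf{B}^{d}=\mathbf{B}^{d}$ and $\mathbf{B}^{d}+\mathbf{B}^{d}=2\mathbf{B}^{d}$, so
\[
\mathbf{K_o}=\tfrac{1}{2}\left(\mathbf{B}^{d}+(-\mathbf{B}^{d})\right)=\mathbf{B}^{d}.
\]
Consequently each parameter attached to $\mathbf{K_o}$ becomes a parameter of the ball: the Hadwiger number $H(\mathbf{K_o})=H(\mathbf{B}^{d})=k(d)$ (touching translates of $\mathbf{B}^{d}$ are exactly kissing unit balls, so its Hadwiger number is the kissing number); the densest translative packing density $\delta(\mathbf{K_o})=\delta(\mathbf{B}^{d})=\delta_{d}$; and the isoperimetric ratio satisfies $\mathrm{iq}(\mathbf{B}^{d})/\mathrm{iq}(\mathbf{K_o})=1$, so the $d$-th root in the middle term equals $1$.

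Substituting these into the first line of Theorem~\ref{15} yields
\[
c(n,d)\le \frac{k(d)}{2}\, n-\frac{1}{2^{d}\,\delta_{d}^{\frac{d-1}{d}}}\, n^{\frac{d-1}{d}}-\bigl(k(d)-h(\mathbf{B}^{d})-1\bigr),
\]
which already carries the two main terms of the asserted bound. It then remains to discard the constant $k(d)-h(\mathbf{B}^{d})-1$ and to upgrade $\le$ to the strict $<$. I would argue that this constant is positive. Given any configuration of $h(\mathbf{B}^{d})$ unit balls touching $\mathbf{B}^{d}$ and lying in a closed supporting halfspace with outer unit normal $u$, the associated contact directions $v_{i}$ satisfy $\langle v_{i},u\rangle\le 0$; the pole direction $u$ then makes a non-positive inner product with every $v_{i}$ and so can be adjoined as a further kissing ball, giving $k(d)\ge h(\mathbf{B}^{d})+1$. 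A short refinement of this packing argument shows the gap is in fact at least one unit larger, so that $k(d)-h(\mathbf{B}^{d})-1>0$, and dropping this strictly positive quantity produces exactly the strict inequality claimed in Corollary~\ref{d-sphere}.

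Since Theorem~\ref{15} is taken as given, the only genuine work lies in this last bookkeeping, and the main (minor) obstacle is the strictness: verifying that the one-sided Hadwiger number of the ball lies strictly below $k(d)-1$. This is transparent in low dimensions from the known values (for instance $h(\mathbf{B}^{2})=4<5=k(2)-1$ and $h(\mathbf{B}^{3})=9<11=k(3)-1$), and in general follows from the hemisphere constraint forcing a maximal one-sided family to be appreciably smaller than a full kissing family; should one be content with the non-strict version, the inequality $k(d)\ge h(\mathbf{B}^{d})+1$ established above already suffices to delete the constant term.
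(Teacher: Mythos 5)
Your derivation is correct and is essentially the paper's own route: the survey presents Corollary~\ref{d-sphere} precisely as the specialization $\mathbf{K}=\mathbf{B}^{d}$ of the first inequality in Theorem~\ref{15} (quoting the result from \cite{Be12}), with $\mathbf{K_o}=\mathbf{B}^{d}$, $H(\mathbf{B}^{d})=k(d)$, $\delta(\mathbf{B}^{d})=\delta_{d}$ and isoperimetric ratio equal to $1$, exactly as you set it up. The one step you only sketch, namely $k(d)\ge h(\mathbf{B}^{d})+2$ (which makes the discarded constant strictly positive and hence gives the strict inequality), does hold and can be completed in one line in your notation: given the one-sided contact directions $v_{i}$ with $\langle v_{i},u\rangle\le 0$, take a unit vector $w\perp u$ and adjoin the two directions $u_{\pm}=\cos\frac{\pi}{6}\,u\pm\sin\frac{\pi}{6}\,w$; then $\langle u_{+},u_{-}\rangle=\cos\frac{\pi}{3}=\frac{1}{2}$ and $\langle u_{\pm},v_{i}\rangle\le \sin\frac{\pi}{6}=\frac{1}{2}$, so all pairwise angles in the enlarged family are at least $\frac{\pi}{3}$ and it is a valid kissing configuration of $h(\mathbf{B}^{d})+2$ balls.
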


Now, recall the well-known theorem of Kabatiansky and Levenshtein \cite{KaLe} that $k(d) \leq 2^{0.401d(1+o(1))}$ and $\delta_{d} \leq 2^{-0.599d(1+o(1))}$ as $d \to +\infty$. Together with Corollary \ref{d-sphere} this gives 
\[
￼￼c(n,d) < \frac{1}{2}2^{0.401d(1+o(1))} n - \frac{1}{2^d} 2^{0.599(1+o(1))(d-1)}n^{\frac{d-1}{d}}, 
\]
for $n>1$, as $d \to +\infty$. 

In particular, for $d=3$ we have $k(3) = 12$ \cite{ScWa} and $\delta_{3} = \frac{\pi}{\sqrt{18}}$ \cite{Hales}. Thus, by combining these with Corollary \ref{d-sphere} we find that for $n>1$,  
\[￼￼c(n,3) < 6n-\frac{1}{8}\left(\frac{\pi}{\sqrt{18}}\right)^{-\frac{2}{3}}n^{\frac{2}{3}} = 6n-0.152\ldots n^{\frac{2}{3}}.
\]

The above upper bound for $c(n,3)$ was substantially improved, first in \cite{Be12} and then further in \cite{BeRe}. The current best upper bound is stated in Theorem \ref{Bezdek-etc} (i). 

In the proof of Theorem~\ref{15} published in \cite{B02}, the following statement plays an important role that might be of independent interest and so we quote it as follows. For the sake of completeness we wish to point out that Theorem~\ref{Bezdek-15} and Corollary~\ref{Bezdek-15-Corollary}, are actual strengthenings of Theorem 3.1 and Corollary 3.1 of \cite{BHW94} mainly because, in our case the containers of the packings in question are highly non-convex.

\begin{theorem}\label{Bezdek-15}
Let $\mathbf{K_o}$ be a convex body in $\mathbb{E}^{d}$, $d\geq 2$ symmetric about the origin $\mathbf{o}$ of $\mathbb{E}^{d}$ and let $\{\mathbf{c}_1+\mathbf{K_o}, \mathbf{c}_2+\mathbf{K_o}, \dots , \mathbf{c}_n+\mathbf{K_o}\}$ be an arbitrary packing
of $n>1$ translates of $\mathbf{K_o}$ in $\mathbb{E}^{d}$. Then
$$ \frac{n{\rm vol}_d(\mathbf{K_o})}{{\rm vol}_d(\bigcup_{i=1}^n(\mathbf{c}_i+2\mathbf{K_o}))}\le \delta(\mathbf{K_o}).$$
\end{theorem}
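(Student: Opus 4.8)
The plan is to read off the bound directly from the definition of the packing density $\delta(\mathbf{K_o})$ as the supremum of the (upper) densities of all packings of translates of $\mathbf{K_o}$ in $\mathbb{E}^{d}$. Write $U:=\bigcup_{i=1}^{n}(\mathbf{c}_i+2\mathbf{K_o})$ for the container appearing in the denominator. Since every packing of translates of $\mathbf{K_o}$ has density at most $\delta(\mathbf{K_o})$, it suffices to exhibit a single infinite packing of translates of $\mathbf{K_o}$ whose density is at least $\frac{n\,{\rm vol}_d(\mathbf{K_o})}{{\rm vol}_d(U)}$; the asserted inequality then follows at once.

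First I would record the elementary but crucial geometric input coming from central symmetry and convexity. Because $\mathbf{o}\in\inter\mathbf{K_o}$, for every $\mathbf{x}\in\mathbf{K_o}$ one has $\tfrac12\mathbf{x}=\tfrac12\mathbf{x}+\tfrac12\mathbf{o}\in\inter\mathbf{K_o}$, so that $\mathbf{K_o}\subset\inter(2\mathbf{K_o})$. Consequently each packing element is cushioned strictly inside its own doubled cell, $\mathbf{c}_i+\mathbf{K_o}\subset\inter(\mathbf{c}_i+2\mathbf{K_o})\subset\inter U$, while the non-overlap hypothesis reads precisely as $\mathbf{c}_i-\mathbf{c}_j\notin\inter(2\mathbf{K_o})$ for $i\neq j$ (using $-\mathbf{K_o}=\mathbf{K_o}$ and $\mathbf{K_o}+\mathbf{K_o}=2\mathbf{K_o}$). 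These two facts are what let the doubled cells serve as the bookkeeping device for the density count.

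Next I would construct the required packing by replicating the configuration $\{\mathbf{c}_1,\dots,\mathbf{c}_n\}$ periodically along a lattice $\Lambda$, obtaining the family $\{\mathbf{c}_i+\lambda+\mathbf{K_o}:1\le i\le n,\ \lambda\in\Lambda\}$. Each fundamental cell then carries exactly $n$ translates of $\mathbf{K_o}$, of total volume $n\,{\rm vol}_d(\mathbf{K_o})$, so the density of this periodic family equals $\frac{n\,{\rm vol}_d(\mathbf{K_o})}{\det\Lambda}$. The task is to choose $\Lambda$ so that (a) the family is genuinely a packing, and (b) $\det\Lambda\le{\rm vol}_d(U)$; combining (a), (b) with the reduction above closes the argument. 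By the reformulation of non-overlap, property (a) amounts to requiring that $\Lambda$ meet each bounded symmetric set $\inter(2\mathbf{K_o})+(\mathbf{c}_j-\mathbf{c}_i)$ only at the origin.

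The main obstacle is exactly the interplay of (a) and (b). Unlike the convex-container situation of \cite{BHW94}, the set $U$ is highly non-convex and does not tile $\mathbb{E}^{d}$, so one cannot simply adopt $U$ itself as a fundamental domain. The heart of the proof is therefore to show that the cushioning $\mathbf{c}_i+\mathbf{K_o}\subset\inter(\mathbf{c}_i+2\mathbf{K_o})$ allows the concavities of one period-copy of the container to receive the protruding cells of its neighbours, so that an admissible lattice can be threaded through with covolume not exceeding ${\rm vol}_d(U)$. Making this interlocking quantitative — certifying the existence of a lattice $\Lambda$ satisfying (a) with $\det\Lambda\le{\rm vol}_d(U)$ — is the one genuinely non-trivial step, and it is precisely here that the non-convexity of the container (the very feature strengthening Theorem 3.1 of \cite{BHW94}) must be exploited rather than avoided. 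As a safeguard I would carry out the replication inside a large cube and pass to the limit, so that boundary contributions become negligible and only the bulk ratio $\frac{n\,{\rm vol}_d(\mathbf{K_o})}{\det\Lambda}$ survives in the density estimate.
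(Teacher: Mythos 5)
Your opening reduction is fine: since $\delta(\mathbf{K_o})$ is the supremum of upper densities of packings of translates of $\mathbf{K_o}$, it does suffice to exhibit one infinite packing of density at least $n\,{\rm vol}_d(\mathbf{K_o})/{\rm vol}_d(U)$, and your restatement of non-overlap as $\mathbf{c}_i-\mathbf{c}_j\notin\inter(2\mathbf{K_o})$ is the standard consequence of symmetry and convexity. The problem is that everything after this is a repackaging of the theorem, not a proof of it: the entire content now sits in the claim that some lattice $\Lambda$ with $\det\Lambda\le{\rm vol}_d(U)$ has no nonzero vector in $F:=\bigcup_{i,j}\bigl((\mathbf{c}_j-\mathbf{c}_i)+\inter(2\mathbf{K_o})\bigr)$, and you explicitly leave this step --- correctly identified as the only non-trivial one --- unproved, offering in its place the verbal picture of concavities receiving protrusions. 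There is moreover a concrete quantitative obstruction showing that this step cannot be carried out by the standard tools and might even fail. Take the $n$ centers pairwise enormously far apart: then ${\rm vol}_d(U)=n\,{\rm vol}_d(2\mathbf{K_o})$, while $F$ is a disjoint union of $n^2-n+1$ translates of $\inter(2\mathbf{K_o})$, so ${\rm vol}_d(F)\approx n\cdot{\rm vol}_d(U)$. A Minkowski--Hlawka/mean-value argument yields a lattice avoiding $F$ only when its covolume exceeds roughly ${\rm vol}_d(F)$, i.e.\ about $n$ times your budget. Equivalently, your requirement is that the given centers, reduced modulo $\Lambda$, remain pairwise $2\mathbf{K_o}$-separated in a torus of volume at most $n\,{\rm vol}_d(2\mathbf{K_o})$; for adversarially placed centers a first-moment count gives of order $n$ expected violated pairs, against only the $O(d^2)$ free parameters of $\Lambda$. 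Nothing in the proposal addresses how to beat this count, so the argument has a genuine gap at its heart.

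For comparison: the survey itself states Theorem~\ref{Bezdek-15} without proof, quoting it from \cite{B02} (which strengthens Theorem 3.1 of \cite{BHW94}), and the proof there obtains the factor $n$ by a mechanism that avoids periodizing the given configuration altogether. The key observation is that a translate $\mathbf{x}+\mathbf{K_o}$ overlaps some member of the shifted configuration $\{\mathbf{c}_i+\mathbf{t}+\mathbf{K_o}\}$ if and only if $\mathbf{x}\in\mathbf{t}+\inter(U)$; this is precisely why the union of doubled bodies is the right container. Suppose for contradiction that $n\,{\rm vol}_d(\mathbf{K_o})/{\rm vol}_d(U)>\delta(\mathbf{K_o})$, and take an infinite packing $\{\mathbf{x}_m+\mathbf{K_o}\}$ of density close to $\delta(\mathbf{K_o})$. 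Averaging over translations $\mathbf{t}$ shows that the expected number of centers $\mathbf{x}_m$ lying in $\mathbf{t}+\inter(U)$ is about $\delta(\mathbf{K_o})\,{\rm vol}_d(U)/{\rm vol}_d(\mathbf{K_o})<n$; for a good $\mathbf{t}$ one deletes those fewer than $n$ bodies and legally inserts the $n$ bodies $\{\mathbf{c}_i+\mathbf{t}+\mathbf{K_o}\}$, a net gain. Doing this swap in every cell of a very coarse auxiliary lattice, with a good $\mathbf{t}$ chosen cell by cell, raises the density by a fixed positive amount, contradicting the definition of $\delta(\mathbf{K_o})$. Note that here the near-optimal comparison packing is what supplies both the factor $n$ and the sharpness of the bound (it is asymptotically attained by large chunks of optimal packings); a lattice threaded through a fixed, adversarial configuration has no access to that information, which is exactly why your route stalls where it does.
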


The following is an immediate corollary of Theorem~\ref{Bezdek-15}.

\begin{corollary}\label{Bezdek-15-Corollary}
Let ${\cal P}_n(\mathbf{K_o})$ be the family of all possible packings of $n>1$ translates of the $\mathbf{o}$-symmetric convex body $\mathbf{K_o}$ in $\mathbb{E}^{d}$, $d\geq 2$. Moreover, let 
$$\delta (\mathbf{K_o}, n):=\max \bigg\{ \frac{n{\rm vol}_d(\mathbf{K_o})}{{\rm vol}_d(\bigcup_{i=1}^n(\mathbf{c}_i+2\mathbf{K_o}))}\ \bigg| \ \{ \mathbf{c}_1+\mathbf{K_o}, \dots , \mathbf{c}_n+\mathbf{K_o}\}\in {\cal P}_n(\mathbf{K_o}) \bigg\}.$$
Then
$$\limsup_{n\to\infty}\delta (\mathbf{K_o}, n)= \delta(\mathbf{K_o}) .$$

\end{corollary}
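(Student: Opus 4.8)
The plan is to prove the two opposite inequalities separately. The bound $\limsup_{n\to\infty}\delta(\mathbf{K_o},n)\le\delta(\mathbf{K_o})$ is immediate from Theorem~\ref{Bezdek-15}: that theorem bounds the ratio $n\,{\rm vol}_d(\mathbf{K_o})/{\rm vol}_d(\bigcup_{i=1}^n(\mathbf{c}_i+2\mathbf{K_o}))$ by $\delta(\mathbf{K_o})$ for \emph{every} packing in $\mathcal{P}_n(\mathbf{K_o})$, so taking the maximum gives $\delta(\mathbf{K_o},n)\le\delta(\mathbf{K_o})$ for each $n$, and the $\limsup$ inherits this. (One first checks that the maximum defining $\delta(\mathbf{K_o},n)$ is attained; after translating so that $\mathbf{c}_1=\mathbf{o}$, configurations whose centers spread out only enlarge the denominator, so maximizers stay in a bounded region and a routine compactness argument applies.) All the work therefore goes into the reverse inequality $\limsup_{n\to\infty}\delta(\mathbf{K_o},n)\ge\delta(\mathbf{K_o})$, which I would obtain by producing explicit finite packings whose normalized densities approach $\delta(\mathbf{K_o})$.

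To build these packings I would fix $\varepsilon>0$ and, using the definition of $\delta(\mathbf{K_o})$ as a supremum of packing densities, select a periodic packing $\mathcal{Q}$ of $\mathbb{E}^{d}$ by translates of $\mathbf{K_o}$ of density at least $\delta(\mathbf{K_o})-\varepsilon$. For a large parameter $R>0$ I would then keep only those translates of $\mathcal{Q}$ whose centers lie in the cube $Q_R=[-R,R]^d$; denote the resulting finite packing by $\{\mathbf{c}_1+\mathbf{K_o},\dots,\mathbf{c}_{n(R)}+\mathbf{K_o}\}\in\mathcal{P}_{n(R)}(\mathbf{K_o})$. Periodicity guarantees that $n(R)\,{\rm vol}_d(\mathbf{K_o})/(2R)^d\to\delta(\mathcal{Q})\ge\delta(\mathbf{K_o})-\varepsilon$ as $R\to\infty$, which controls the numerator of the density ratio for this family.

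For the denominator I would use the Minkowski-sum containment $\bigcup_{i=1}^{n(R)}(\mathbf{c}_i+2\mathbf{K_o})\subseteq Q_R+2\mathbf{K_o}$, valid since each $\mathbf{c}_i\in Q_R$. Because $2\mathbf{K_o}$ is bounded, $Q_R+2\mathbf{K_o}$ sits inside a cube of side $2R+O(1)$, so ${\rm vol}_d\big(\bigcup_{i=1}^{n(R)}(\mathbf{c}_i+2\mathbf{K_o})\big)\le(2R)^d+O(R^{d-1})$. Dividing the two estimates gives
$$\delta(\mathbf{K_o},n(R))\ \ge\ \frac{n(R)\,{\rm vol}_d(\mathbf{K_o})}{(2R)^d+O(R^{d-1})},$$
whose right-hand side tends to $\delta(\mathbf{K_o})-\varepsilon$ as $R\to\infty$. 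Hence $\limsup_{n\to\infty}\delta(\mathbf{K_o},n)\ge\delta(\mathbf{K_o})-\varepsilon$, and letting $\varepsilon\to0$ yields the reverse inequality; combined with the first paragraph this gives the asserted equality.

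The only genuine obstacle is making sure the boundary effects are negligible, that is, that both numerator and denominator have the same leading term $(2R)^d$ with only $o(R^d)$ corrections. The denominator estimate is clean thanks to the Minkowski containment, and the numerator is handled by choosing $\mathcal{Q}$ periodic so that its density genuinely exists as the limit above; using an arbitrary near-optimal packing instead would force a more careful averaging argument over large boxes. The attainment of the defining maximum is a minor compactness point and not essential, since one may work with the supremum throughout.
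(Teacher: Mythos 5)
Your proposal is correct and takes essentially the route the paper intends: the paper states the corollary as ``immediate'' from Theorem~\ref{Bezdek-15}, which gives exactly your upper-bound paragraph, while the reverse inequality $\limsup_{n\to\infty}\delta(\mathbf{K_o},n)\ge\delta(\mathbf{K_o})$ is the standard truncation argument (restricting a near-optimal periodic packing to a large cube and noting the outer parallel correction is of order $R^{d-1}$) that the paper leaves implicit. Your write-up simply supplies those boundary-effect details and the minor compactness point about attainment of the maximum, both of which are handled exactly as one would expect.
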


Interestingly enough one can interpret the contact number problem on the exact values of $c(n,d)$ as a volume minimization question. Here we give only an outline of that idea introduced and discussed in detail in \cite{BeLa15}.

\begin{definition}\label{defn:main}
Let $\P^n:=\{\c_i+\BB^d\ |\ 1\le i\le n \ {\rm with}\ \|\c_j-\c_k\|\ge 2\ {\rm for}\  {\rm all}\ 1\le j< k\le n\}$ be an arbitrary packing of $n>1$ unit balls in $\Eu^{d}$. The part of space covered by the unit balls of $\P^n$ is labelled by $\mathbf{P}^n:=\bigcup_{i=1}^{n}(\c_i+\BB^d)$. Moreover, let
$C^n:=\{\c_i\ |\ 1\le i\le n\}$ stand for the set of centers of the unit balls in $\P^n$. Furthermore, for any $\lambda >0$ let $\mathbf{P}^n_{\lambda}:=\bigcup \{ \mathbf{x}+\lambda\BB^d\ |\ \mathbf{x}\in \mathbf{P}^n\}=\bigcup_{i=1}^{n}(\c_i+(1+\lambda)\BB^d)$ denote the outer parallel domain of $\mathbf{P}^n$ having outer radius $\lambda$. Finally, let 

$$\delta_d(n, \lambda):=\max_{\P^n}\frac{n\omega_d}{\vol_d(\mathbf{P}^n_{\lambda})}=\frac{n\omega_d}{\min_{\P^n}  \vol_d\left(\bigcup_{i=1}^{n}(\c_i+(1+\lambda)\BB^d) \right)}$$ 
and
$$\delta_d(\lambda):=\limsup_{n\to +\infty}\delta_d(n, \lambda).$$
\end{definition}

Now, let $\P:=\{\c_i+\BB^d\ |\ i=1,2,\dots \ {\rm with}\ \|\c_j-\c_k\|\ge 2\ {\rm for}\  {\rm all}\ 1\le j< k\}$ be an arbitrary infinite packing of unit balls in $\Eu^{d}$. Recall that
the packing density $\delta_d$ of unit balls in $\Eu^{d}$ can be computed as follows:

$$\delta_d=\sup_{\P}\left(\limsup_{R\to +\infty}  \frac{\sum_{\c_i+\BB^d\subset R\BB^d}\vol_d(\c_i+\BB^d)}{\vol_d(R\BB^d)}\right).$$
Hence, it is rather easy to see that $\delta_d\le\delta_d(\lambda)$ holds for all $\lambda>0, d\ge 2$.
On the other hand, it was proved in \cite{B02} (see also Corollary~\ref{Bezdek-15-Corollary}) that {\it $\delta_d=\delta_d(\lambda)$ for all $\lambda\ge 1$} leading to the classical sphere packing problem. Now, we are ready to put forward the following question from \cite{BeLa15}.

\begin{problem}\label{core}
Determine (resp., estimate) $\delta_d(\lambda)$ for $d\ge 2$, $0<\lambda<\sqrt{\frac{2d}{d+1}}-1$.
\end{problem}

First, we note that $\frac{2}{\sqrt{3}}-1\le \sqrt{\frac{2d}{d+1}}-1$ holds for all $d\ge 2$. Second, observe that as $\frac{2}{\sqrt{3}}$ is the circumradius of a regular triangle of side length $2$, therefore if $0<\lambda<\frac{2}{\sqrt{3}}-1$, then for any unit ball packing $\P^n$ no three of the closed balls in the family $\{\c_i+(1+\lambda)\BB^d\ |\ 1\le i\le n\}$ have a point in common. In other words, for any $\lambda$ with $0<\lambda < \frac{2}{\sqrt{3}}-1$ and for any unit ball packing $\P^n$, in the arrangement $\{\c_i+(1+\lambda)\BB^d\ |\ 1\le i\le n\}$ of closed balls of radii $1+\lambda$ only pairs of balls may overlap. Thus, computing
$\delta_d(n, \lambda)$, i.e., minimizing $\vol_d(\mathbf{P}^n_{\lambda})$ means maximizing the total volume of pairwise overlaps in the ball arrangement $\{\c_i+(1+\lambda)\BB^d\ |\ 1\le i\le n\}$ with the underlying packing $\P^n$. Intuition would suggest to achieve this by simply maximizing the number of touching pairs in the unit ball packing $\P^n$. Hence, Problem~\ref{core} becomes very close to the {\it contact number problem} of finite unit ball packings for $0<\lambda<\frac{2}{\sqrt{3}}-1$. Indeed, we have the following statement proved in \cite{BeLa15}.

\begin{theorem}\label{contact numbers and isoperimetry}
{\it Let $n>1$ and $d>1$ be given. Then there exists $\lambda_{d, n}>0$ and a packing  $\widehat{\P}^n$ of $n$ unit balls in $\Eu^{d}$ possessing the largest contact number for the given $n$ such that for all $\lambda$ satisfying $0<\lambda< \lambda_{d, n}$, $\delta_d(n, \lambda)$ is generated by $\widehat{\P}^n$, i.e., $\vol_d(\mathbf{P}^n_{\lambda})\ge \vol_d(\widehat{\mathbf{P}}^n_{\lambda})$ holds for every packing $\P^n$ of $n$ unit balls in $\Eu^{d}$.}
\end{theorem}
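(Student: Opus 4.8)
The plan is to turn the volume‑minimization into a purely combinatorial counting problem about nearly‑touching pairs. First I would fix $\lambda$ with $0<\lambda<\frac{2}{\sqrt3}-1$, so that, as already observed in the paragraph preceding Problem~\ref{core}, no three of the balls in the inflated arrangement $\{\c_i+(1+\lambda)\BB^d\}$ share a common point. Inclusion–exclusion then truncates after the pairwise terms, giving for every packing $\P^n$ the identity
\[
\vol_d(\mathbf{P}^n_\lambda)=n(1+\lambda)^d\omega_d-\sum_{1\le i<j\le n} v_\lambda(\|\c_i-\c_j\|),
\]
where $v_\lambda(t)$ is the volume of the lens formed by two balls of radius $1+\lambda$ whose centres are at distance $t$. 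Since the first term is the same for all packings, minimizing $\vol_d(\mathbf{P}^n_\lambda)$ is equivalent to maximizing the total overlap $\Phi(\P^n,\lambda):=\sum_{i<j} v_\lambda(\|\c_i-\c_j\|)$.

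Next I would record the two elementary properties that drive everything: $v_\lambda(t)$ is nonincreasing in $t$, and $v_\lambda(t)=0$ once $t\ge 2+2\lambda$. Hence only pairs with $\|\c_i-\c_j\|<2+2\lambda$ contribute to $\Phi$, and because $\|\c_i-\c_j\|\ge 2$ in a unit‑ball packing, each contributing pair contributes at most $v_\lambda(2)$. Writing $N(\P^n,\lambda)$ for the number of pairs at distance strictly less than $2+2\lambda$, this gives the clean bound
\[
\Phi(\P^n,\lambda)\le N(\P^n,\lambda)\,v_\lambda(2).
\]

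The heart of the argument, and the step I expect to be the main obstacle, is a \emph{uniform} stability estimate: there is a $\lambda_0>0$ with $N(\P^n,\lambda)\le c(n,d)$ for \emph{every} packing $\P^n$ and every $\lambda<\lambda_0$. The difficulty is uniformity over the continuum of packings — a fixed packing loses its near‑contacts for small $\lambda$, but the threshold a priori depends on the packing. I would resolve this by compactness: if the bound failed, there would be $\lambda_k\to 0$ and packings $\P^{(k)}$ each having at least $c(n,d)+1$ pairs at distance $<2+2\lambda_k$. The balls involved in these close pairs span clusters of bounded diameter, so after translating and passing to a convergent subsequence one obtains a limit packing in which those $c(n,d)+1$ pairs have distance $\le 2$, hence exactly $2$ by the packing condition. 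That limit would realize more than $c(n,d)$ contacts, contradicting the maximality of $c(n,d)$. (The same compactness also shows that $c(n,d)$ is attained, so a genuine maximal packing exists.)

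Finally I would assemble the pieces. Fix once and for all a packing $\widehat{\P}^n$ attaining the largest contact number $c(n,d)$; all its non‑touching pairs are at distance strictly above $2$, so, there being finitely many pairs, some $s_0>0$ bounds every non‑contact distance below by $2+s_0$. Setting $\lambda_{d,n}:=\min\{s_0/2,\ \tfrac{2}{\sqrt3}-1,\ \lambda_0\}$, take any $\lambda<\lambda_{d,n}$. For $\widehat{\P}^n$ all near‑contacts are then excluded, so $\Phi(\widehat{\P}^n,\lambda)=c(n,d)\,v_\lambda(2)$, whereas for an arbitrary packing the two displayed bounds give $\Phi(\P^n,\lambda)\le c(n,d)\,v_\lambda(2)$. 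Thus $\Phi(\P^n,\lambda)\le\Phi(\widehat{\P}^n,\lambda)$, which by the volume identity is precisely $\vol_d(\mathbf{P}^n_\lambda)\ge\vol_d(\widehat{\mathbf{P}}^n_\lambda)$, i.e.\ $\widehat{\P}^n$ generates $\delta_d(n,\lambda)$. The only nontrivial ingredient is the uniform counting bound $N(\P^n,\lambda)\le c(n,d)$; everything else reduces to inclusion–exclusion and the monotonicity of the lens volume.
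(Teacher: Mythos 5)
Your proposal is correct, and it follows precisely the route the paper itself sketches in the paragraph preceding Problem~\ref{core}: for $0<\lambda<\frac{2}{\sqrt{3}}-1$ only pairwise overlaps occur in $\{\c_i+(1+\lambda)\BB^d\}$, so minimizing $\vol_d(\mathbf{P}^n_{\lambda})$ is the same as maximizing the total lens volume. Note that the paper does not actually contain a proof of this theorem --- it is quoted from \cite{BeLa15} --- so the comparison can only be made against that sketch; what you supply beyond it, and what genuinely requires an argument, is the uniform threshold $\lambda_0$ such that \emph{every} packing of $n$ unit balls has at most $c(n,d)$ pairs at distance less than $2+2\lambda$, and your compactness argument for this is sound. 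One point to tighten in its write-up: the close-pair graph of the packings $\P^{(k)}$ may have several connected components, and no single translation brings all of them into a bounded region; you should pass to a subsequence on which the close-pair graph is a fixed graph $G$, extract a convergent subsequence of the center configuration of each component of $G$ separately (each component has Euclidean diameter at most, say, $4n$), and then reassemble the limiting components, together with the balls lying in no close pair, at mutual distances large enough that no new overlaps arise. The resulting configuration is a packing of $n$ unit balls in which every edge of $G$ has become a distance exactly $2$, i.e.\ a packing with at least $c(n,d)+1$ contacts, giving the desired contradiction. With that detail spelled out, your choice $\lambda_{d,n}=\min\{s_0/2,\ \frac{2}{\sqrt{3}}-1,\ \lambda_0\}$ and the final chain $\Phi(\P^n,\lambda)\le N(\P^n,\lambda)\,v_\lambda(2)\le c(n,d)\,v_\lambda(2)=\Phi(\widehat{\P}^n,\lambda)$ deliver exactly the inequality $\vol_d(\mathbf{P}^n_{\lambda})\ge \vol_d(\widehat{\mathbf{P}}^n_{\lambda})$ asserted in the theorem.
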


\subsection{Contact graphs of unit sphere packings in $\mathbb{E}^{d}$}

Given the NP-hardness of recognizing contact graphs of unit sphere packings for $d=2, 3, 4$, Hlin\v{e}n\'{y} \cite{Hl} conjectured that the problem remains NP-hard in any fixed dimension.

\begin{conjecture}\label{hard}
The recognition of contact graphs of unit sphere packings is NP-hard in any fixed dimension $d\geq 2$.
\end{conjecture}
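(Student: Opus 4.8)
The plan is to prove the conjecture by upgrading the $3$-SAT reduction behind the cases $d=3,4$ of Theorem~\ref{3-hard} into a single construction valid for every fixed $d\ge 2$. Recall that such a reduction encodes a Boolean formula $\varphi$ as a graph $G_\varphi$ assembled from variable gadgets, clause gadgets, and wires, engineered so that $G_\varphi$ is the contact graph of a unit ball packing in $\mathbb{E}^{d}$ if and only if $\varphi$ is satisfiable; since $G_\varphi$ has size polynomial in $|\varphi|$ and $d$ is fixed, NP-hardness follows. The decisive new ingredient in arbitrary dimension is a \emph{flattening gadget}: a rigid sub-packing whose contact graph forces every realization to place the attached spheres into a fixed low-dimensional affine flat of $\mathbb{E}^{d}$. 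With such a gadget available, the planar and three-dimensional logic gadgets already understood can be transplanted into $\mathbb{E}^{d}$ essentially verbatim, since the surplus coordinate directions are neutralized.

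First I would construct the flattening gadget. The natural candidate is a sufficiently large patch of a rigid lattice packing lying in a fixed $2$-flat (or hyperplane), reinforced with enough cross-bracing contacts that the patch admits, up to isometry, a \emph{unique} realization spanning exactly that flat; one then attaches each gadget of the main construction to the patch along a rigid triangular interface so that the attached spheres inherit the flat. Correctness of this step rests on showing that the contact graph of the patch is globally rigid as a unit-ball framework in $\mathbb{E}^{d}$ and that no realization can fold out of the intended flat while preserving all contacts and all non-overlap constraints.

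Second, granting the flattening gadget, I would reassemble the variable, clause, and wire gadgets inside the forced flat and verify the two implications. The ``if'' direction is routine: a satisfying assignment yields an explicit planar-type placement, which embeds into $\mathbb{E}^{d}$ by inclusion of the flat. The ``only if'' direction carries the combinatorial bookkeeping: one must argue that \emph{every} valid realization of $G_\varphi$ in $\mathbb{E}^{d}$, once confined by the flattening gadget, induces a consistent truth assignment, exactly as in the $d=3$ proof. Alternatively, and perhaps more cleanly, one can recast the whole scheme as an inductive \emph{dimension reduction}: attach the flattening gadget to an arbitrary instance $G$ to force it into a hyperplane $\cong\mathbb{E}^{d-1}$, so that $G$ is realizable in $\mathbb{E}^{d}$ if and only if it is realizable in $\mathbb{E}^{d-1}$, and iterate down to the base cases supplied by Theorems~\ref{2-hard} and~\ref{3-hard}.

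The hard part will be the flattening gadget, and specifically the surplus degrees of freedom that appear as $d$ grows. In $\mathbb{E}^{2}$ a triangle of mutually touching balls is rigid, but in $\mathbb{E}^{d}$ a configuration rigid within a plane typically becomes flexible, because the orthogonal complement of the flat offers room to bend along a codimension-one hinge; moreover the non-overlap inequalities are far weaker in high dimension, since there is simply more empty space around each ball. Thus the genuine challenge is to design a polynomial-size bracing pattern that is provably globally rigid and flat-forcing in \emph{every} dimension simultaneously, while ensuring that the added braces create no unintended contacts. Establishing this rigidity, rather than merely exhibiting realizable configurations, is the crux on which a full proof of Conjecture~\ref{hard} would turn.
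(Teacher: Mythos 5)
You have attempted to prove a statement that the paper itself does not prove and explicitly leaves open: Conjecture~\ref{hard} is Hlin\v{e}n\'{y}'s conjecture, and the paper records only partial progress on it, namely Theorem~\ref{nphard}, which establishes NP-hardness in dimensions $d=3,4,8,24$ via the Hlin\v{e}n\'{y}--Kratochv\'{i}l machinery of schemes of $m$-combs; the paper states plainly that for $d\neq 2,3,4,8,24$ the complexity is unknown. So the correct benchmark here is not ``does your argument match the paper's proof'' (there is none), but whether your proposal closes the open problem. It does not, and you effectively concede this yourself: the entire construction hinges on the ``flattening gadget,'' a polynomial-size, globally rigid, flat-forcing sub-packing that works in every dimension simultaneously, and you never construct it --- you only name it and observe that everything else would follow. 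A proof whose sole nontrivial ingredient is deferred is a research program, not a proof.

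It is worth noting that the missing ingredient is exactly the known obstruction. The $m$-comb approach underlying Theorem~\ref{nphard} already requires rigid point sets in $\mathbb{E}^{d}$ (in the strong sense defined in the paper: any abstract isomorphism of minimal-distance graphs must be an isometry), and suitable such sets are known only in special dimensions --- simplices and cross-polytopes give limited mileage, and the cases $d=8$ and $d=24$ exploit the exceptional rigidity of configurations tied to the $E_8$ and Leech lattices (the same dimensions where the kissing number is known exactly). In a generic high dimension, kissing configurations and dense local packings are famously non-rigid and non-unique, the non-overlap inequalities become very slack, and nobody knows how to build a contact graph that forces its realization to be unique up to isometry, let alone one forcing confinement to a prescribed flat. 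Your proposed inductive dimension reduction inherits the same problem at every step: forcing a packing into a hyperplane of $\mathbb{E}^{d}$ \emph{is} a flat-forcing rigidity statement, so the induction does not reduce the difficulty, it just repeats it $d-2$ times. In short, your plan correctly identifies where the difficulty lies, but what you label ``the crux'' is precisely the open problem, and no part of your text resolves it.
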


Hlin\v{e}n\'{y} and Kratochvil \cite{HlKr} made some progress towards this conjecture. They reproved Theorem \ref{3-hard} using the rather elaborate notion of a \textit{scheme of an $m$-comb} and then proved Conjecture \ref{hard} for $d=8, 24$. To define an $m$-comb we need to introduce some more terminology.

For a hyperplane $h$ in ${\mathbb{E}}^{d}$ and $S\subseteq {\mathbb{E}}^{d}$, let $S/h$ denote the mirror reflection of $S$ across $h$. We say that a set $S$ is a minimal-distance representation of a graph $G$, denoted by $G=M(S)$, if the vertices of $G$ are the points of $S$, and the edges of $G$ correspond to minimal-distance pairs of points in $S$. The graph $G$ is then called the \textit{minimal-distance graph of $S$}. Also, let $m(S)$ denote the minimal distance among pairs of points of $S$. Finally, when $m(S)=1$, we say that the set $S$ is \textit{rigid} in ${\mathbb{E}}^{d}$ if for any set $S' \subseteq {\mathbb{E}}^{d}$, $m(S')=1$, the following holds: If $\phi:M(S') \to M(S)$ is an isomorphism, then $\phi$ is an isometry of the underlying sets $S',S$. (Notice that the definition of a rigid set is slightly stronger than just saying that $S$ has a unique representation up to isometry.) For example, the vertices of a regular tetrahedron or a regular octahedron form rigid sets in ${\mathbb{E}}^{3}$. In general, the vertices of a $d$-dimensional simplex or a $d$-dimensional cross-polytope are rigid sets in ${\mathbb{E}}^{d}$. 

\begin{definition}[Scheme of an $m$-comb \cite{HlKr}]
Let $T, V, W$ be point sets in ${\mathbb{E}}^{d}$, and let $\boldsymbol{\alpha}, \boldsymbol{\beta}$ be vectors in ${\mathbb{E}}^{d}$. The five-tuple $(V,W,T,\boldsymbol{\alpha}, \boldsymbol{\beta})$ is called a scheme of an $m$-comb in ${\mathbb{E}}^{d}$ if the following conditions are satisfied: 
\begin{itemize}
\item The sets $V \cup W$ and $T$ are both rigid in ${\mathbb{E}}^{d}$, and $m(V)=m(V \cup W)=m(T)=1$. 
\item The set $V$ spans a hyperplane $h$ in ${\mathbb{E}}^{d}$. The vector $\boldsymbol{\alpha}$ is parallel to $h$. Let $T_{0} =T \cap (T -\boldsymbol{\beta})$. Then the set $T_0$ spans the whole ${\mathbb{E}}^{d}$. For $i=0, \ldots, m-1$, the set $(T_{0} + i\boldsymbol{\alpha}) \cap V$ spans the hyperplane $h$. 
\item Let $c$ be the maximal distance of $W$ from $h$. Then the distance between $h$ and $h+\boldsymbol{\beta}$ is greater than $2c + 1$. The distance between $h$, and $T + \boldsymbol{\beta}$ or $T - 2\boldsymbol{\beta}$, is greater than $c+1$.
\item Let $p$ be the straight line parallel to $\boldsymbol{\beta}$ such that the maximal distance $c'$ between $p$ and the points of $T$ is minimized. Then the distance of $p$ and $p +\boldsymbol{\alpha}$ is greater than $2c' +1$. For $j \in {\mathbb{Z}} - \{0,\ldots,m -1\}$, the distance between the sets $V\cup W$ and $p+j\boldsymbol{\alpha}$ is greater than $c'+1$.
\item The sets $T$ and $(T - \boldsymbol{\beta})\setminus T$ are non-overlapping, and the sets $T$ and $T + 2\boldsymbol{\beta}$ are strictly non-overlapping; while the sets $T$ and $(T/h) + \boldsymbol{\beta}$ are overlapping each other. Let $T'=T \cup (T - \boldsymbol{\beta})$. Then, for $i=0,\ldots, m-1$, the sets $V$ and $(T' +i\boldsymbol{\alpha})\setminus V$ are non-overlapping. 
\end{itemize}
\end{definition}  

The term `$m$-comb' comes from the actual geometry of such a scheme, which is comb-like (see the illustration of an $m$-comb in \cite{HlKr}). It turns out that if $d\geq 3$ is such that for every $m> 0$, there exists a scheme of an $m$-comb in ${\mathbb{E}}^{d}$, then the recognition of contact graphs of unit sphere packings in ${\mathbb{E}}^{d}$ is an NP-hard problem \cite{HlKr}. 

\begin{theorem}\label{nphard}
The problem of recognizing contact graphs of unit sphere packings is NP-hard in ${\mathbb{E}}^{3}$, ${\mathbb{E}}^{4}$, ${\mathbb{E}}^{8}$ and ${\mathbb{E}}^{24}$. 
\end{theorem}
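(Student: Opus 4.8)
The plan is to deduce Theorem~\ref{nphard} as a direct consequence of the general criterion stated just above it, namely that whenever a dimension $d\geq 3$ admits a scheme of an $m$-comb for \emph{every} $m>0$, the recognition of contact graphs of unit sphere packings in $\mathbb{E}^{d}$ is NP-hard. Thus the theorem reduces to a purely geometric construction task: for each of the four target dimensions $d\in\{3,4,8,24\}$, I must exhibit, for every $m>0$, a five-tuple $(V,W,T,\boldsymbol{\alpha},\boldsymbol{\beta})$ satisfying all five bullet conditions in the definition of an $m$-comb. The NP-hardness for $d=3,4$ is already recorded as Theorem~\ref{3-hard}, so strictly the new content is $d=8,24$, but I would present a uniform construction that covers all four cases and re-derives the lower dimensions as part of Theorem~\ref{HlKr}'s reproof.

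First I would fix, in each dimension $d$, a convenient rigid ``seed'' configuration to serve as the building blocks: the vertices of a regular $d$-simplex and of a $d$-dimensional cross-polytope are rigid in $\mathbb{E}^{d}$ (as the excerpt notes), and in dimensions $8$ and $24$ one has the extra structural richness of the $E_8$ root system and the Leech lattice minimal vectors, whose kissing configurations are rigid and highly symmetric. The role of $T$ is to be a rigid ``tooth'' of the comb spanning all of $\mathbb{E}^{d}$, while $V\cup W$ forms the rigid ``spine'' with $V$ confined to a hyperplane $h$ and $W$ protruding from it. I would choose $\boldsymbol{\alpha}$ parallel to $h$ (the direction in which the $m$ teeth are repeated) and $\boldsymbol{\beta}$ transverse to $h$ (the offset that locks a tooth against the spine), scaling all distances so that $m(V)=m(V\cup W)=m(T)=1$. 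The construction is then \emph{periodic in $m$}: one designs a single tooth-plus-spine-segment and translates it by $i\boldsymbol{\alpha}$ for $i=0,\dots,m-1$, so that producing the family for all $m$ is automatic once the unit cell works.

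The bulk of the verification is checking the five bullet conditions, and these split into two flavors. The \emph{rigidity} conditions (first bullet, plus the spanning requirements in the second bullet) follow from choosing $V\cup W$ and $T$ among the rigid seed sets above and confirming that $V$ alone spans $h$ and that $T_0=T\cap(T-\boldsymbol{\beta})$ still spans $\mathbb{E}^{d}$; here the key is to pick $\boldsymbol{\beta}$ small enough in the transverse direction that the overlap $T\cap(T-\boldsymbol{\beta})$ retains full-dimensional spanning points. The remaining conditions (bullets three through five) are \emph{metric separation and (non-)overlap} inequalities: one must guarantee the comb's teeth are far enough apart along $\boldsymbol{\alpha}$ (spacing $>2c'+1$) and that consecutive spine layers are far enough apart along $\boldsymbol{\beta}$ (spacing $>2c+1$), while simultaneously forcing the single deliberate overlap between $T$ and $(T/h)+\boldsymbol{\beta}$ that encodes the Boolean constraint. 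These are all satisfiable by taking $|\boldsymbol{\alpha}|$ and $|\boldsymbol{\beta}|$ sufficiently large relative to the bounded quantities $c,c'$, which are fixed once the finite seed sets are fixed.

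I expect the main obstacle to be the last bullet, which demands a delicate simultaneous balance: the sets $T$ and $T+2\boldsymbol{\beta}$ must be \emph{strictly} non-overlapping, $T$ and $(T-\boldsymbol{\beta})\setminus T$ merely non-overlapping, yet $T$ and the reflected-and-shifted copy $(T/h)+\boldsymbol{\beta}$ must genuinely \emph{overlap} — all with the \emph{same} fixed $\boldsymbol{\beta}$, and with the spine-tooth compatibility $V$ versus $(T'+i\boldsymbol{\alpha})\setminus V$ non-overlapping held across all $i$. In low dimensions $d=3,4$ the scarcity of directions makes it delicate to reflect $T$ across $h$ and reland it into contact without creating spurious overlaps elsewhere, whereas in $d=8,24$ the abundance of rigid unit-distance configurations coming from $E_8$ and the Leech lattice makes room to satisfy the overlap condition while preserving all the strict separations; this is precisely why Hlin\v{e}n\'{y} and Kratochvil could push the argument to those two dimensions. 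The proof therefore hinges on an explicit, dimension-specific choice of the seed sets and of the translation vectors $\boldsymbol{\alpha},\boldsymbol{\beta}$ realizing this overlap/separation ledger, after which invoking the NP-hardness criterion stated before the theorem completes the argument.
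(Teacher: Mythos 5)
Your proposal takes essentially the same route as the paper: the theorem is deduced from the criterion, stated immediately before it, that the existence of a scheme of an $m$-comb in $\mathbb{E}^{d}$ for every $m>0$ implies NP-hardness of recognition in $\mathbb{E}^{d}$, with the dimension-specific constructions of such schemes for $d=3,4,8,24$ constituting the actual content (and being deferred to the cited work of Hlin\v{e}n\'{y} and Kratochv\'{i}l, exactly as the paper does). Your observation that $d=8,24$ is the genuinely new content beyond Theorem~\ref{3-hard} also matches the paper's framing, so apart from your unverified speculation about how the seed sets (e.g., $E_8$ and Leech configurations) enter the constructions, the argument structure coincides with the paper's.
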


The proof relies on constructing such schemes for $d=3,4,8,24$. For $d\neq 2,3,4,8,24$, the complexity of recognizing unit sphere contact graphs is unknown, while for $d=2$ it is NP-hard from Theorem \ref{2-hard}.

\subsection{Digital and totally separable sphere packings in $\mathbb{E}^{d}$}

Let us imagine that we generate totally separable packings of unit diameter balls in $\mathbb{E}^{d}$ such that every center of the balls chosen, is a lattice point of the integer lattice $\mathbb{Z}^{d}$ in $\mathbb{E}^{d}$. Then, as in Section \ref{sec:polyomino}, let $c_{\mathbb{Z}}(n,d)$ denote the largest possible contact number of all packings of $n$ unit diameter balls obtained in this way.

\begin{theorem}\label{dD-cubic}
$c_{\mathbb{Z}}(n,d)\le \lfloor dn- d n^{\frac{d-1}{d}} \rfloor $, for all $n>1$ and $d\ge 2$.
\end{theorem}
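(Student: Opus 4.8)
The plan is to reduce the statement to a discrete isoperimetric inequality and then invoke the Loomis--Whitney inequality. Recall from Section~\ref{sec:polyomino} that a digital packing of $n$ unit-diameter balls centered at points of $\mathbb{Z}^{d}$ corresponds bijectively to a $d$-dimensional polyomino $P$ of volume $n$ (a set of $n$ lattice cells), in which the contact number equals the number of facets shared between pairs of cells. First I would record the elementary bookkeeping identity. Each cell has $2d$ facets, so counting facet-incidences gives $2dn = S + 2E$, where $E$ is the contact number (each shared facet is counted once in each of the two cells it separates) and $S$ is the number of boundary facets, i.e.\ the surface area of $P$. Hence $E = dn - S/2$, and maximizing the contact number is exactly minimizing the surface area $S$. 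It therefore suffices to prove $S \ge 2d\,n^{\frac{d-1}{d}}$ for every polyomino of volume $n$.

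Next I would split the surface area by coordinate direction, writing $S = \sum_{i=1}^{d} S_i$, where $S_i$ counts the boundary facets orthogonal to the $i$th axis. Let $A_i \subseteq \mathbb{Z}^{d-1}$ be the orthogonal projection of $P$ that forgets the $i$th coordinate. For each point of $A_i$, the line through it in direction $e_i$ meets $P$ in a nonempty union of maximal runs of cells, and each such run contributes exactly two facets orthogonal to $e_i$ (its two end caps). Consequently $S_i \ge 2\lvert A_i\rvert$, and summing over $i$ gives $S \ge 2\sum_{i=1}^{d} \lvert A_i\rvert$.

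The key step is then the Loomis--Whitney inequality $\prod_{i=1}^{d} \lvert A_i\rvert \ge \lvert P\rvert^{\,d-1} = n^{d-1}$, which bounds the volume of a body by the product of its coordinate shadows. Combining this with the AM--GM inequality $\sum_{i=1}^{d}\lvert A_i\rvert \ge d\bigl(\prod_{i=1}^{d}\lvert A_i\rvert\bigr)^{1/d}$ yields
\[
\sum_{i=1}^{d}\lvert A_i\rvert \ge d\bigl(n^{d-1}\bigr)^{1/d} = d\,n^{\frac{d-1}{d}},
\]
whence $S \ge 2d\,n^{\frac{d-1}{d}}$ and $E = dn - S/2 \le dn - d\,n^{\frac{d-1}{d}}$. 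Since $E$ is an integer, we may pass to the floor, obtaining $c_{\mathbb{Z}}(n,d) \le \lfloor dn - d\,n^{\frac{d-1}{d}}\rfloor$, as desired.

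I expect the only genuine content to be the invocation of the Loomis--Whitney inequality; everything else is elementary bookkeeping. The mild subtlety to watch is the projection/run argument establishing $S_i \ge 2\lvert A_i\rvert$: one must check that every nonempty fiber line contributes at least two orthogonal facets and that the boundary facets partition cleanly among the $d$ axis directions, but this is routine. A secondary point worth verifying is that this bound is of the correct order, namely that the quasi-cube arrangements underlying Theorem~\ref{polyomino} nearly attain equality, confirming that the inequality cannot be improved in its leading term.
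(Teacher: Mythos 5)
Your argument is correct, and it reaches the paper's key intermediate inequality by a genuinely different route. Both proofs start with the same bookkeeping: writing $\mathbf{P}$ for the union of the unit cubes centered at the ball centers, the paper notes ${\rm svol}_{d-1}(\mathbf{P}) = 2dn - 2c_{\mathbb{Z}}(n,d)$, which is exactly your identity $2dn = S + 2E$, so everything hinges on the isoperimetric bound $S \ge 2d\,n^{\frac{d-1}{d}}$. The paper obtains this bound from continuous convex geometry: Lemma~\ref{isoperimetric-box-polytopes} shows that among box-polytopes of given volume the cube minimizes surface volume, by applying the Brunn--Minkowski inequality to the Minkowski sum $\mathbf{A}+\epsilon\mathbf{C}^d$ and letting $\epsilon \to 0^{+}$, which gives the isoperimetric quotient bound of Corollary~\ref{iso-box-poly}. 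You obtain the same bound discretely: splitting the boundary facets by axis direction, observing that each maximal run of cells in a fiber contributes its two end caps so that $S_i \ge 2\lvert A_i\rvert$, and then combining the discrete Loomis--Whitney inequality $\prod_{i=1}^{d}\lvert A_i\rvert \ge n^{d-1}$ with AM--GM; all of these steps are sound. What the paper's route buys is a lemma of independent interest valid for arbitrary box-polytopes (and this is the form quoted in the text); what yours buys is a purely combinatorial proof that needs no definition of surface volume as a Minkowski-sum limit and no Brunn--Minkowski at all, only Loomis--Whitney, which itself has an elementary inductive (or entropy) proof. Two small additional points in your favor: neither argument uses connectivity of the cell configuration, which is as it should be since a digital packing need not be connected, and your per-direction estimate $S_i \ge 2\lvert A_i\rvert$ is slightly finer information than the paper extracts.
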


For the convenience of the reader, we recall here the elementary short proof of Theorem~\ref{dD-cubic} from \cite{BeSzSz}. A union of finitely many axis parallel $d$-dimensional orthogonal boxes having pairwise disjoint interiors in $\mathbb{E}^{d}$ is called a {\it box-polytope}. One may call the following statement the isoperimetric inequality for box-polytopes, which together with its proof presented below is an analogue of the isoperimetric inequality for convex bodies derived from the Brunn--Minkowski inequality. (For more details on the latter see for example, \cite{Ba97}.) 

\begin{lemma}\label{isoperimetric-box-polytopes}
Among box-polytopes of given volume the cubes have the least surface volume.
\end{lemma}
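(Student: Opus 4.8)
The plan is to imitate the classical derivation of the isoperimetric inequality for convex bodies from the Brunn--Minkowski inequality, but with the Euclidean ball replaced by the axis-parallel cube, since the boundary of a box-polytope consists entirely of facets normal to the coordinate directions. Write $V:=\vol_d(P)$ and $S:=\svol_{d-1}(\bd P)$ for a box-polytope $P$, and let $Q:=[-1,1]^d$, so that $\vol_d(Q)=2^d$ and $\vol_d(Q)^{1/d}=2$. For $\epsilon>0$ consider the outer parallel box-polytope $P+\epsilon Q=\{x+\epsilon y : x\in P,\ y\in Q\}$. The two ingredients I would combine are a lower bound for $\vol_d(P+\epsilon Q)$ coming from Brunn--Minkowski and a matching first-order \emph{upper} estimate $\vol_d(P+\epsilon Q)\le V+S\,\epsilon+O(\epsilon^2)$; comparing the linear terms as $\epsilon\to0^+$ will yield $S\ge 2d\,V^{(d-1)/d}$.

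For the lower bound I would invoke the Brunn--Minkowski inequality in its general form, valid for arbitrary nonempty compact sets (a box-polytope need not be convex, so this generality is essential): $\vol_d(P+\epsilon Q)^{1/d}\ge \vol_d(P)^{1/d}+\vol_d(\epsilon Q)^{1/d}=V^{1/d}+2\epsilon$. Raising this to the $d$-th power gives $\vol_d(P+\epsilon Q)\ge (V^{1/d}+2\epsilon)^d=V+2d\,V^{(d-1)/d}\epsilon+O(\epsilon^2)$.

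The upper estimate is where the special geometry of box-polytopes enters. Because every facet $F$ of $\bd P$ is perpendicular to some coordinate axis $e_i$, the portion of $P+\epsilon Q$ lying outside $P$ and over $F$ is, to leading order, a slab of $(d-1)$-volume $\svol_{d-1}(F)$ and thickness $\epsilon$; summing over all facets contributes exactly $S\,\epsilon$. The remaining pieces of the shell sit over the faces of codimension at least $2$ where facets meet, and each such piece has volume $O(\epsilon^2)$. Hence $\vol_d(P+\epsilon Q)=V+S\,\epsilon+O(\epsilon^2)$. Combining this with the Brunn--Minkowski lower bound, dividing by $\epsilon$, and letting $\epsilon\to0^+$ gives $S\ge 2d\,V^{(d-1)/d}$. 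Finally, a cube of volume $V$ has side $V^{1/d}$ and surface volume $2d\,(V^{1/d})^{d-1}=2d\,V^{(d-1)/d}$, so equality holds exactly for cubes, which identifies them as the box-polytopes of least surface volume for a given volume.

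I expect the main obstacle to be making the $O(\epsilon^2)$ control of the shell rigorous in the non-convex case: unlike for convex bodies, facets of a box-polytope can meet along reflex edges, and the outer parallel set can fail to be the disjoint union of the naive facet slabs near such edges. I would handle this by covering $(P+\epsilon Q)\setminus \inter P$ by the facet slabs together with the $\epsilon$-neighbourhoods of the faces of codimension at least $2$, each of volume $O(\epsilon^2)$, so that only the one-sided bound $\vol_d(P+\epsilon Q)\le V+S\,\epsilon+O(\epsilon^2)$ is actually needed for the conclusion. (Alternatively, one can avoid the parallel-volume analysis altogether and derive $S\ge 2d\,V^{(d-1)/d}$ from the Loomis--Whitney inequality $V^{d-1}\le\prod_{i=1}^d \vol_{d-1}(\pi_i(P))$ together with the elementary shadow bound $S\ge 2\sum_i \vol_{d-1}(\pi_i(P))$ and the AM--GM inequality; but the Brunn--Minkowski route is closer in spirit to the convex isoperimetric inequality.)
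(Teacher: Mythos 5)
Your proposal is correct and takes essentially the same route as the paper: both normalize via the axis-parallel cube $Q$ as gauge body, apply the Brunn--Minkowski inequality to $\vol_d(P+\epsilon Q)$ (valid for non-convex compact sets), and compare first-order terms as $\epsilon\to 0^+$ to get $\svol_{d-1}(\bd P)\ge 2d\,\vol_d(P)^{(d-1)/d}$, with cubes attaining equality. Your careful one-sided shell estimate merely makes rigorous the step the paper labels ``rather straightforward to show,'' namely that the cube-gauged parallel-volume derivative of a box-polytope equals its surface volume despite reflex edges.
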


\begin{proof}  Without loss of generality, we may assume that the volume ${\rm vol}_d(\mathbf{A})$ of the given box-polytope $\mathbf{A}$
in $\mathbb{E}^{d}$ is equal to $2^d$, i.e., ${\rm vol}_d(\mathbf{A})=2^d$. Let $\mathbf{C}^d$ be an axis parallel $d$-dimensional cube of $\mathbb{E}^{d}$ with ${\rm vol}_d(\mathbf{C}^d)=2^d$. Let the surface volume of $\mathbf{C}^d$ be denoted by ${\rm svol}_{d-1}(\mathbf{C}^d)$. Clearly, ${\rm svol}_{d-1}(\mathbf{C}^d)=d\cdot{\rm vol}_d(\mathbf{C}^d)$. On the other hand,
if ${\rm svol}_{d-1}(\mathbf{A})$ denotes the surface volume of the box-polytope $\mathbf{A}$, then it is rather straightforward to show that

$${\rm svol}_{d-1}(\mathbf{A})=\lim_{ \epsilon\to 0^+}\frac{{\rm vol}_d(\mathbf{A}+\epsilon\mathbf{C}^d)-{\rm vol}_d(\mathbf{A})}{\epsilon}\ ,$$
where $"+"$ in the numerator stands for the Minkowski addition of the given sets. Using the Brunn--Minkowski inequality (\cite{Ba97}) we get that
$${\rm vol}_d(\mathbf{A}+\epsilon\mathbf{C}^d)\ge \left( {\rm vol}_d(\mathbf{A})^{\frac{1}{d}}+{\rm vol}_d(\epsilon\mathbf{C}^d)^{\frac{1}{d}}\right)^d= \left( {\rm vol}_d(\mathbf{A})^{\frac{1}{d}}+\epsilon\cdot {\rm vol}_d(\mathbf{C}^d)^{\frac{1}{d}}\right)^d.$$
Hence,
\begin{align*}{\rm vol}_d(\mathbf{A}+\epsilon\mathbf{C}^d) &\ge {\rm vol}_d(\mathbf{A})+d\cdot {\rm vol}_d(\mathbf{A})^{\frac{d-1}{d}}\cdot\epsilon \cdot {\rm vol}_d(\mathbf{C}^d)^{\frac{1}{d}}\\
&= {\rm vol}_d(\mathbf{A})+\epsilon \cdot d \cdot {\rm vol}_d(\mathbf{C}^d)\\
&= {\rm vol}_d(\mathbf{A})+\epsilon\cdot {\rm svol}_{d-1}(\mathbf{C}^d)\ .  
\end{align*}
So,
$$\frac{{\rm vol}_d(\mathbf{A}+\epsilon\mathbf{C}^d)-{\rm vol}_d(\mathbf{A})}{\epsilon}\ge {\rm svol}_{d-1}(\mathbf{C}^d)$$
and therefore, ${\rm svol}_{d-1}(\mathbf{A})\ge {\rm svol}_{d-1}(\mathbf{C}^d)$, finishing the proof of Lemma~\ref{isoperimetric-box-polytopes}. 
\end{proof}

\begin{corollary}\label{iso-box-poly}
For any box-polytope $\mathbf{P}$ of $\mathbb{E}^{d}$ the isoperimetric quotient of $\mathbf{P}$
is at least as large as the isoperimetric quotient of a cube, i.e., 
$$\frac{{\rm svol}_{d-1}(\mathbf{P})^{d}}{{\rm vol}_d(\mathbf{P})^{d-1}}\ge (2d)^d\ .$$
\end{corollary}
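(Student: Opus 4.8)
The goal is to derive Corollary~\ref{iso-box-poly} as a direct consequence of Lemma~\ref{isoperimetric-box-polytopes}, so the plan is essentially to make the scaling bookkeeping explicit and to compute the isoperimetric quotient of a cube. First I would observe that the isoperimetric quotient
\[
\mathrm{iq}(\mathbf{P}):=\frac{{\rm svol}_{d-1}(\mathbf{P})^{d}}{{\rm vol}_d(\mathbf{P})^{d-1}}
\]
is scale-invariant: if we replace $\mathbf{P}$ by $t\mathbf{P}$ for some $t>0$, then ${\rm vol}_d(t\mathbf{P})=t^{d}{\rm vol}_d(\mathbf{P})$ and ${\rm svol}_{d-1}(t\mathbf{P})=t^{d-1}{\rm svol}_{d-1}(\mathbf{P})$, so the numerator picks up a factor $t^{d(d-1)}$ and the denominator a factor $t^{d(d-1)}$, which cancel. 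Hence $\mathrm{iq}$ depends only on the shape of $\mathbf{P}$, and in particular every cube (regardless of edge length) has the same isoperimetric quotient.

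Next I would compute $\mathrm{iq}$ for a cube explicitly. For an axis-parallel cube $\mathbf{C}$ of edge length $a$ one has ${\rm vol}_d(\mathbf{C})=a^{d}$ and ${\rm svol}_{d-1}(\mathbf{C})=2d\,a^{d-1}$ (the $2d$ facets each of $(d-1)$-volume $a^{d-1}$). Therefore
\[
\mathrm{iq}(\mathbf{C})=\frac{(2d\,a^{d-1})^{d}}{(a^{d})^{d-1}}
=\frac{(2d)^{d}a^{d(d-1)}}{a^{d(d-1)}}=(2d)^{d},
\]
which is exactly the claimed lower bound.

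Finally, I would tie the pieces together. Given an arbitrary box-polytope $\mathbf{P}$, rescale it by the factor $t$ that makes ${\rm vol}_d(t\mathbf{P})$ equal to the volume $2^{d}$ of the comparison cube $\mathbf{C}^{d}$ used in Lemma~\ref{isoperimetric-box-polytopes}; since $\mathrm{iq}$ is scale-invariant this changes nothing. Lemma~\ref{isoperimetric-box-polytopes} then gives ${\rm svol}_{d-1}(t\mathbf{P})\ge {\rm svol}_{d-1}(\mathbf{C}^{d})$ at equal volumes, and raising to the $d$-th power and dividing by the common $(d-1)$-power of the volume yields $\mathrm{iq}(t\mathbf{P})\ge \mathrm{iq}(\mathbf{C}^{d})=(2d)^{d}$. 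By scale-invariance $\mathrm{iq}(\mathbf{P})=\mathrm{iq}(t\mathbf{P})\ge(2d)^{d}$, which is the assertion.

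I expect no genuine obstacle here, since the statement is a formal corollary of the already-proved lemma. The only point requiring a little care is the bookkeeping of exponents in the scaling argument, i.e., verifying that the two powers of $t$ cancel; everything else is the single routine computation of the cube's isoperimetric quotient.
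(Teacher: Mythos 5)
Your proposal is correct and follows exactly the route the paper intends: the corollary is presented there as an immediate consequence of Lemma~\ref{isoperimetric-box-polytopes}, and your argument (scale-invariance of the isoperimetric quotient, the computation $\mathrm{iq}(\mathbf{C})=(2d)^d$ for a cube, and the comparison at equal volume via the lemma) is precisely the omitted bookkeeping. No gaps; the only implicit point worth noting is that a dilate $t\mathbf{P}$ of a box-polytope is again a box-polytope, which is clear.
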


Now, let $\overline{{\cal P}}:=\{\mathbf{c}_1+\overline{\mathbf{B}}^d, \mathbf{c}_2+\overline{\mathbf{B}}^d, \dots , \mathbf{c}_n+\overline{\mathbf{B}}^d\} $ denote the totally separable packing of $n$ unit diameter balls with centers $\{\mathbf{c}_1, \mathbf{c}_2, \dots , \mathbf{c}_n\}\subset \mathbb{Z}^{d}$ having contact number $c_{\mathbb{Z}}(n,d)$ in $\mathbb{E}^d$. ($\overline{{\cal P}}$ might not be uniquely determined up to congruence in which case $\overline{{\cal P}}$ stands for any of those extremal packings.) Let $\mathbf{U}^d$ be the axis parallel $d$-dimensional unit cube centered at the origin $\mathbf{o}$ in $\mathbb{E}^d$. Then the unit cubes $\{\mathbf{c}_1+\mathbf{U}^d, \mathbf{c}_2+\mathbf{U}^d, \dots , \mathbf{c}_n+\mathbf{U}^d\}$ have pairwise disjoint interiors and $\mathbf{P}=\cup_{i=1}^{n} (\mathbf{c}_i+\mathbf{U}^d)$ is a box-polytope. Clearly, ${\rm svol}_{d-1}(\mathbf{P})=2dn-2c_{\mathbb{Z}}(n,d)$. Hence,
Corollary~\ref{iso-box-poly} implies that
$$2dn-2c_{\mathbb{Z}}(n,d)={\rm svol}_{d-1}(\mathbf{P})  \ge 2d {\rm vol}_d(\mathbf{P})^{\frac{d-1}{d}}=  2dn^{\frac{d-1}{d}}\ .$$
So, $dn-dn^{\frac{d-1}{d}}\ge c_{\mathbb{Z}}(n,d)$, finishing the proof of Theorem~\ref{dD-cubic}.

Here we recall Theorem \ref{polyomino} and refer to \cite{BeSzSz} to note that the upper bound of Theorem~\ref{dD-cubic} is sharp for $d=2$ and all $n>1$ and for $d\ge 3$ and all $n=k^d$ with $k>1$. On the other hand, it is not a sharp estimate for example, for $d=3$ and $n=5$.  

We close this section by stating the recent upper bounds of \cite{BeSzSz} for the contact numbers of totally separable unit ball packings in $\Eu^{d}$. 

\begin{theorem}\label{dD}
$c_{\rm sep}(n,d)\le dn-\frac{1}{2d^{\frac{d-1}{2}}}n^{\frac{d-1}{d}} $, for all $n>1$ and $d\ge 4$. 
\end{theorem}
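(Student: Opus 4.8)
The plan is to follow the two-part structure of the digital bound in Theorem~\ref{dD-cubic}: extract the linear term $dn$ from a sharp local (kissing-type) estimate valid for totally separable packings, and produce the negative $n^{\frac{d-1}{d}}$ term from the box-polytope isoperimetric inequality of Corollary~\ref{iso-box-poly}. Write the balls as $B_i=\mathbf{c}_i+\frac12\mathbf{B}^d$, so that $B_i$ and $B_j$ touch exactly when $\|\mathbf{c}_i-\mathbf{c}_j\|=1$, and record $2c_{\rm sep}(n,d)=\sum_{i=1}^n\deg(i)$, where $\deg(i)$ is the number of spheres touching $B_i$.

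First I would establish that in a totally separable packing every ball touches at most $2d$ others. If $B_i$ touches $B_j$, then, the two balls having equal radii, the only hyperplane separating them is their common tangent hyperplane at the contact point, with normal $\mathbf{c}_j-\mathbf{c}_i$. Suppose now $B_i$ touches $B_j$ and $B_k$ along unit directions $u$ and $v$. Writing a generic separating hyperplane of $B_j,B_k$ as $\langle x-\mathbf{c}_i,w\rangle=t$ and imposing that it avoid the interior of $B_i$ (i.e.\ $|t|\ge\frac12$) forces $w=-v$ and $t=-\frac12$, that is, the tangent hyperplane at the $B_i$--$B_k$ contact; this hyperplane separates $B_j$ from $B_k$ precisely when $\langle u,v\rangle\le 0$. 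Hence total separability makes the contact directions at each ball pairwise non-acute, and since a set of unit vectors in $\mathbb{E}^d$ with pairwise non-positive inner products has at most $2d$ elements, we obtain $\deg(i)\le 2d$ for all $i$, i.e.
\[
c_{\rm sep}(n,d)\le dn-\tfrac12 D,\qquad D:=\sum_{i=1}^n\bigl(2d-\deg(i)\bigr)\ge 0 .
\]
It then remains to bound the total contact deficiency $D$ from below by $d^{-\frac{d-1}{2}}\,n^{\frac{d-1}{d}}$.

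For the correction term I would associate to the packing an axis-parallel box-polytope whose volume is proportional to $n\,d^{-d/2}$ — the number $d^{-d/2}$ being the volume of the largest cube inscribed in a ball of radius $\frac12$, whose side is $1/\sqrt d$ — and apply Corollary~\ref{iso-box-poly}. This yields a surface-volume lower bound of order $2d\,(n\,d^{-d/2})^{\frac{d-1}{d}}=2\,d^{\frac{3-d}{2}}\,n^{\frac{d-1}{d}}$; matching it against the part of the surface volume that can be charged to the deficient (boundary) balls, together with the identity $\frac{1}{2d^{(d-1)/2}}=\frac12\bigl(d^{-1/2}\bigr)^{d-1}$, would give the desired lower bound on $D$ and hence the theorem.

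The hard part is exactly this charging step. Unlike the lattice case of Theorem~\ref{dD-cubic}, a contact between two unit-diameter balls does \emph{not} correspond to a shared facet of any cubes: an inscribed cube has side $1/\sqrt d<1$, so two such cubes lying in distinct balls (whose centres are at distance $\ge 1$) never abut, and no larger axis-parallel cube fits inside a ball. Consequently one cannot simply write the surface volume of the box-polytope as $2dn\cdot(\text{facet area})-2c_{\rm sep}(n,d)\cdot(\text{facet area})$ as in the digital proof, and a genuinely global volume/surface comparison is needed instead — or, alternatively, a separable-density estimate obtained from Lemma~\ref{isoperimetric-box-polytopes} fed into a Theorem~\ref{Bezdek-15}-type volume bound for the doubled balls. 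It is precisely here that total separability must be used in full force, and the resulting constant $\frac{1}{2d^{(d-1)/2}}$ — markedly weaker than the digital constant $d$ — reflects the loss incurred because contacts can no longer be read off as shared cube facets.
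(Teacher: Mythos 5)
Your opening step is correct, and it is in fact the right first move: for two touching congruent balls the unique separating hyperplane is the common tangent at the contact point, total separability forces this hyperplane to miss the interiors of all other balls, hence the contact directions at any fixed ball are pairwise non-acute, and since at most $2d$ unit vectors in $\mathbb{E}^d$ can have pairwise non-positive inner products, $\deg(i)\le 2d$ and $c_{\rm sep}(n,d)\le dn$. The genuine gap is everything after that: you never prove the required deficiency bound $D\ge d^{-\frac{d-1}{2}}n^{\frac{d-1}{d}}$, and the mechanism you sketch for it is not a ``hard charging step'' but a dead end. Since each inscribed axis-parallel cube has circumradius equal to the radius of its ball, two such cubes lying in touching balls can meet only at the single contact point, and only when that point happens to be a common vertex of both cubes; so your box-polytope is, up to such vertex coincidences, a disjoint union of $n$ congruent cubes, and its surface volume equals $2dn\cdot d^{-\frac{d-1}{2}}$ \emph{identically}, independent of the contact structure of the packing. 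Applying Corollary~\ref{iso-box-poly} to it therefore collapses to $n\ge n^{\frac{d-1}{d}}$, a vacuous inequality, and no charging scheme can extract $D$ from a quantity that does not depend on the contacts. Your fallback suggestion (a separable density estimate fed into a Theorem~\ref{Bezdek-15}-type bound) is also blocked: the density of totally separable unit ball packings is an open problem for $d\ge 4$ (the survey notes it is solved only for $d=2,3$), and Lemma~\ref{isoperimetric-box-polytopes} does not produce such an estimate.

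For comparison, note that the survey itself does not prove Theorem~\ref{dD}; it quotes it from \cite{BeSzSz}, and the argument there obtains the correction term from the \emph{equality case} of your degree bound together with the classical (not the box-polytope) isoperimetric inequality. If a ball has the full $2d$ contacts, its contact directions must be $\pm f_1,\ldots,\pm f_d$ for some orthonormal basis, so every unit vector $u$ satisfies $\max_k|\langle u,f_k\rangle|\ge 1/\sqrt{d}$. Now enlarge every ball about its center by the factor $\sqrt{d}$ and let $P$ be the union of the enlarged balls. A short computation shows that if $x\in\bd P$ lies on the enlarged sphere of ball $i$, then every contact direction $f$ of ball $i$ satisfies $\langle (x-\mathbf{c}_i)/\|x-\mathbf{c}_i\|,\,f\rangle\le 1/\sqrt{d}$; hence a ball of full degree contributes only a measure-zero set to $\bd P$. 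Since $P$ contains the $n$ disjoint unit balls, ${\rm vol}_d(P)\ge n\omega_d$, so the isoperimetric inequality gives ${\rm svol}_{d-1}(\bd P)\ge d\omega_d n^{\frac{d-1}{d}}$; and as $\bd P$ is covered (up to measure zero) by pieces of the enlarged spheres of balls of degree at most $2d-1$, each such sphere having surface volume $d\omega_d d^{\frac{d-1}{2}}$, at least $n^{\frac{d-1}{d}}/d^{\frac{d-1}{2}}$ balls have degree at most $2d-1$. Summing degrees yields $2c_{\rm sep}(n,d)\le 2dn-n^{\frac{d-1}{d}}/d^{\frac{d-1}{2}}$, which is the theorem. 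In particular, the factor $d^{\frac{d-1}{2}}$ in the constant is $(\sqrt{d})^{d-1}$, the surface-area ratio of the enlarged sphere; its numerical agreement with the facet area of your inscribed cube is a coincidence, and inscribed cubes play no role here, in contrast to the digital case of Theorem~\ref{dD-cubic}.
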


\section{Contact graphs of non-congruent sphere packings}\label{sec:noncongruent}
So far, we have exclusively focused on contact graphs of packings of congruent spheres. In this section, we discuss what is known for general non-congruent sphere packings. Let us denote by $c^{*}(n,d)$ the maximal number of edges in a contact graph of $n$ not necessarily congruent $d$-dimensional balls. Clearly, $c^{*}(n,d)\geq c(n,d)$, for any positive integers $n$ and $d\geq 2$. 

The planar case was first resolved by Koebe \cite{Ko} in 1936. Koebe's result was later rediscovered by Andreev \cite{An} in 1970 and by Thurston in \cite{Th} 1978.\footnote{It is worth-noting that Koebe's paper was written in German and titled `Kontaktprobleme der konformen Abbildung' (Contact problems of conformal mapping). Andreev's paper appeared in Russian. Probably, the first instance of this result appearing in English was in Thurston's lecture notes that were distributed by the Princeton University in 1980. However, the lectures were delivered in 1978-79 \cite{Th}.} The result is referred to as Koebe--Andreev--Thurston theorem or the circle packing theorem.

In terms of contact graphs, the result can be stated as under. 

\begin{theorem}[Koebe--Andreev--Thurston]\label{koebe}
A graph $G$ is a contact graph of a (not necessarily congruent) circle packing in ${\mathbb{E}}^{2}$ if and only if $G$ is planar. 
\end{theorem}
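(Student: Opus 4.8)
The ``only if'' direction is immediate, so essentially all the work is in the converse. For necessity, place a vertex at the center of each circle and draw each edge as the straight segment between the centers of its two tangent circles. Each such segment, apart from its tangency point, runs through the interiors of only its own two disks; since distinct packing circles have disjoint interiors, the segments of two edges without a common endpoint cannot cross, while segments of edges sharing an endpoint meet only at that common center. Hence the drawing is crossing-free and $G$ is planar.

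For sufficiency I would first reduce to triangulations. Fix a plane embedding of $G$ and insert into the interior of every non-triangular face a new vertex joined to all vertices on that face's boundary; viewing the result on the sphere yields a triangulation $T$ on a vertex set $V(G)\cup V'$ with the property that \emph{no} edge of $T$ joins two vertices of $V(G)$ other than the edges of $G$ themselves, i.e.\ $T[V(G)]=G$. If one can pack $T$ so that two circles are tangent \emph{exactly} when the corresponding vertices are adjacent in $T$, then discarding the circles indexed by $V'$ leaves a packing whose contact graph is the induced subgraph $T[V(G)]=G$. So it suffices to realize every sphere triangulation by a circle packing whose contact graph is precisely that triangulation.

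The plan for triangulations is to encode a packing by a positive radius $r_v$ at each vertex. For a face $\{u,v,w\}$ the three mutually tangent circles force their centers to mutual distances $r_u+r_v,\,r_v+r_w,\,r_u+r_w$, so the triangle is rigid and (in the Euclidean model) its angle at $v$ is
\[ \theta_v(u,v,w)=2\arctan\sqrt{\frac{r_u\,r_w}{r_v\,(r_u+r_v+r_w)}}. \]
A radius vector determines an actual packing precisely when the angles close up around every vertex, that is
\[ \Theta_v:=\sum_{f\ni v}\theta_v(f)=2\pi\qquad\text{for all }v, \]
and the structural engine is the \emph{monotonicity} $\partial\theta_v/\partial r_v<0$ and $\partial\theta_v/\partial r_u>0$ for a neighbor $u$: inflating the circle at $v$ shrinks its own angles and enlarges those of its neighbors. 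Realizing the packing on $S^2$ uses the analogous spherical angle function, for which the angle excess of the triangles makes the global budget $\sum_v\Theta_v=2\pi|V|$ consistent automatically, via $F=2|V|-4$ and the sphere's area $4\pi$; this is how I would avoid artificial boundary conditions.

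The crux is existence of a radius vector solving the angle system, which I would obtain in one of two standard ways. In the variational route (after Colin de Verdi\`ere) one passes to logarithmic radii $u_v=\log r_v$; the angle defects $2\pi-\Theta_v$ are then the partial derivatives of an explicit function that the monotonicity relations render convex, and a packing is exactly a critical point, produced by minimization. In the continuity route (after Thurston) one reads the monotonicity as a discrete maximum principle and runs an iterative or degree argument, increasing $r_v$ where $\Theta_v>2\pi$ and decreasing it where $\Theta_v<2\pi$ until all defects vanish. In either approach the one genuine difficulty is \emph{compactness}: one must rule out the radii degenerating to $0$ or $\infty$ (equivalently, a cluster of circles collapsing) at the would-be solution, and this is exactly where the combinatorics of a true sphere triangulation---controlled through Euler's formula and the total angle budget above---enters to force the minimizer into the interior of the positive orthant. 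This degeneration analysis is the main obstacle, and the only place planarity is really used. Finally, stereographic projection from an interior point of a face sends circles to circles and transports the spherical packing to the required packing in ${\mathbb{E}}^{2}$; since the statement claims only existence, the M\"obius uniqueness half of the Koebe--Andreev--Thurston theorem is not needed.
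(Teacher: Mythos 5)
The paper itself offers no proof of Theorem~\ref{koebe}: it is quoted as a literature result with attributions to Koebe \cite{Ko}, Andreev \cite{An} and Thurston \cite{Th}, so there is no in-paper argument to compare yours against; the relevant benchmark is the cited literature, and your sketch does follow that standard line. Your necessity argument (centers joined by segments, disjoint interiors forbid crossings) is correct, and the reduction to sphere triangulations by stellating non-triangular faces, with $T[V(G)]=G$, is the standard one. One point you should make explicit there: you need the packing realizing $T$ to have \emph{no accidental tangencies}. This is free, but for a reason worth stating: $T$ (on at least four vertices) is maximal planar, the contact graph of the realizing packing is planar by your necessity direction and contains $T$, hence equals $T$.

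The genuine gap is in the existence core, exactly where you located the difficulty, and it is more than a compactness issue: your setup is internally inconsistent. The convex functional of the logarithmic radii $u_v=\log r_v$ whose partial derivatives are the defects $2\pi-\Theta_v$ (Colin de Verdi\`ere) exists for the \emph{Euclidean} angle function you wrote down; but for a triangulation of the sphere the Euclidean closure conditions $\Theta_v=2\pi$ for all $v$ are unsatisfiable, since the total angle over all faces is $\pi F=\pi(2|V|-4)=2\pi|V|-4\pi\neq 2\pi|V|$ --- which is precisely why you moved to $\mathbb{S}^2$ in the first place. On the sphere, however, the variational structure breaks down: a spherical packing is unique only up to the noncompact M\"obius group, so every solution comes with a noncompact family of degenerating solutions, no functional of the radii can be coercive, and the spherical analogue of the functional is not convex; the same noncompactness defeats a naive continuity or degree argument. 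The standard repair is the opposite of what you propose: delete one vertex (or one face) of $T$ and solve a \emph{boundary-value} problem in the Euclidean or hyperbolic plane --- the ``artificial boundary conditions'' you hoped to avoid are exactly what restore convexity and compactness --- and then transport the resulting packing by inversion or stereographic projection, the deleted vertex becoming the outer circle. With that replacement your outline matches Thurston's proof; as written, the minimization problem you pose has no solution in the Euclidean model and no convexity in the spherical one.
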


In other words, for any planar graph $G$ of any order $n$, there exist $n$ circular disks with possibly different radii such that when these disks are placed with their centers at the vertices of the graph, the disks centered at the end vertices of each edge of $G$ touch. In addition, this cannot be achieved for any nonplanar graph. This is a rather unique result that is, as we will see shortly, highly unlikely to have an analogue in higher dimensions. It shows that $c^{*}(n,2)=3n-6$, for $n\geq 2$, which is the number of edges in a maximal planar graph. 

A similar simple characterization of contact graphs of general not necessarily congruent sphere packings cannot be found for all dimensions $d\geq 3$, unless P = NP. We briefly discuss this here. In \cite{Hl, HlKr}, the authors report that Kirkpatrick and Rote informed them of the following result in a personal communication in 1997. The proof appears in \cite{HlKr}. 

\begin{theorem}\label{connection}
A graph $G$ has a $d$-unit-ball contact representation if and only if the graph $G\oplus K_2$ has a $(d+1)$-ball contact representation. 
\end{theorem}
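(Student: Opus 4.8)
The plan is to prove both implications simultaneously by exhibiting a dictionary, mediated by a single inversion of $\mathbb{E}^{d+1}\cup\{\infty\}$, between unit-ball contact representations of $G$ in $\mathbb{E}^{d}$ and those ball contact representations of $G\oplus K_2$ in $\mathbb{E}^{d+1}$ in which the two apex vertices of the $K_2$ are realized by a prescribed pair of mutually tangent balls. The geometric heart of the argument is the following observation: a ball that is simultaneously tangent to two parallel hyperplanes at distance $2$ apart, and lies in the closed slab between them, is forced to have radius exactly $1$ with center on the mid-hyperplane; consequently two such balls touch precisely when their centers, viewed as points of the mid-hyperplane $\cong \mathbb{E}^{d}$, are at distance $2$, i.e.\ precisely when the corresponding unit balls touch in $\mathbb{E}^{d}$.

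For the forward implication, suppose $\{b_1,\dots,b_n\}$ is a unit-ball contact representation of $G$ in $\mathbb{E}^{d}$. I would embed $\mathbb{E}^d$ as the mid-hyperplane $M=\{x_{d+1}=1\}$ of the slab $\{0\le x_{d+1}\le 2\}$ in $\mathbb{E}^{d+1}$, replace each $b_i$ by the radius-$1$ ball $B_i'$ with the same center (now a point of $M$), and introduce the two bounding half-spaces $U'=\{x_{d+1}\le 0\}$ and $W'=\{x_{d+1}\ge 2\}$. Each $B_i'$ is tangent to both $\partial U'$ and $\partial W'$, the half-spaces have disjoint interiors, and the contacts among the $B_i'$ reproduce the edges of $G$ by the observation above. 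Regarding half-spaces as balls through the point at infinity, I would then apply an inversion centered at a point $O$ lying in the open slab but outside every $B_i'$ (possible since the configuration is finite). This inversion sends $U',W'$ to genuine balls $u,w$ that are tangent to one another at $O$ (the image of infinity), and sends each $B_i'$ to a genuine ball $B_i$; since inversion is a conformal bijection of $\mathbb{E}^{d+1}\cup\{\infty\}$ preserving tangency and disjointness of interiors, $\{u,w,B_1,\dots,B_n\}$ is a ball packing whose contact graph is exactly $G\oplus K_2$.

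For the converse, I would start with a ball contact representation of $G\oplus K_2$ in which the two apex vertices are realized by balls $u,w$; because the representation is a packing, $u$ and $w$ have disjoint interiors and meet in a single tangency point $O$, and every ball $B_i$ representing a vertex of $G$ is externally tangent to both $u$ and $w$. Inverting about $O$ turns $u,w$ into two parallel half-spaces $U',W'$ (tangency at the center of inversion becomes parallelism) bounding a slab, and turns each $B_i$ into a ball $B_i'$ whose interior is disjoint from $U'\cup W'$ and which is tangent to both bounding hyperplanes; hence $B_i'$ lies in the slab and, by the observation, has radius equal to half the slab-width with center on the mid-hyperplane. Rescaling so that the slab has width $2$, the centers of the $B_i'$ furnish a unit-ball contact representation of $G$ in the mid-hyperplane $\cong\mathbb{E}^{d}$, with contacts matching the edges of $G$ because inversion preserves contacts.

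The routine parts are the computation behind the observation above and the standard facts that inversion maps balls-and-half-spaces to balls-and-half-spaces while preserving tangency. The step I expect to require the most care is bookkeeping the ``sides'': one must check that the two balls $u,w$ (respectively the two half-spaces $U',W'$) end up with disjoint interiors and that each $B_i$ is placed in the region between them rather than nested inside one of them. This is exactly where the packing hypothesis (disjoint interiors, forcing external rather than internal tangency) is essential, and it is what guarantees that the balls tangent to both $u$ and $w$ map into the slab and are thereby forced to be congruent. Choosing the inversion center $O$ in the open slab (forward direction) and at the tangency point (converse direction), and verifying that $O$ can be taken outside all the finitely many balls being inverted, completes the control over these orientations.
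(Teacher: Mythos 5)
Your proposal is correct, and it is essentially the same argument as the one the paper points to (the Kirkpatrick--Rote proof given in the cited Hlin\v{e}n\'{y}--Kratochv\'{i}l paper): sandwich the unit balls in a slab of width $2$, view the two bounding half-spaces as the $K_2$ vertices, and pass back and forth via an inversion that exchanges the tangent pair of balls with the pair of parallel half-spaces. Your attention to the side/orientation bookkeeping (external tangency forced by disjoint interiors, and the inversion center avoiding the finitely many balls) is exactly the care the standard proof requires.
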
 

Here $K_2$ denotes the complete graph on two vertices, while $G\oplus H$ represents the graph formed by taking the disjoint union of $G$ and $H$ and then adding all edges across \cite{HlKr}. Theorem \ref{connection} provides an interesting connection between contact graphs of unit sphere packings in ${\mathbb{E}}^{d}$ and contact graphs of not necessarily congruent sphere packings in ${\mathbb{E}}^{d+1}$. Combining this with Theorem \ref{2-hard}, \ref{3-hard} and \ref{nphard} gives the following \cite{HlKr}.

\begin{corollary}
The problem of recognizing general contact graphs of (not necessarily congruent) sphere packings is NP-hard in dimensions $d=3,4,5,9,25$.
\end{corollary}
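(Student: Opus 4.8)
The plan is to exhibit, for each target dimension $d' \in \{3,4,5,9,25\}$, a polynomial-time many-one (Karp) reduction from a problem already known to be NP-hard to the recognition problem for general (not necessarily congruent) contact graphs in $\mathbb{E}^{d'}$. The source problems will be the unit-sphere recognition problems of Theorems~\ref{2-hard}, \ref{3-hard} and \ref{nphard}, which are NP-hard precisely in dimensions $d = 2, 3, 4, 8, 24$; note that these are exactly the values $d' - 1$ for $d'$ ranging over our target list.

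First I would fix $d \in \{2,3,4,8,24\}$ and recall that deciding whether a given graph $G$ admits a $d$-unit-ball contact representation is NP-hard. The reduction sends $G$ to the graph $G \oplus K_2$. This map is clearly computable in polynomial time: it adjoins two new vertices together with the edge between them and all edges joining them to $V(G)$, so $G \oplus K_2$ has $|V(G)| + 2$ vertices and $|E(G)| + 2|V(G)| + 1$ edges, each produced by inspecting $G$ once.

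The correctness of the reduction is supplied verbatim by Theorem~\ref{connection}: the graph $G$ has a $d$-unit-ball contact representation if and only if $G \oplus K_2$ has a $(d+1)$-ball contact representation. Hence $G$ is a yes-instance of the $d$-unit-ball recognition problem exactly when $G \oplus K_2$ is a yes-instance of the general $(d+1)$-ball recognition problem, so the reduction preserves both yes- and no-instances. A polynomial-time algorithm for the latter would therefore yield one for the former by composing it with the construction of $G \oplus K_2$, and since the former is NP-hard, the general recognition problem in $\mathbb{E}^{d+1}$ inherits NP-hardness. Running this once for each $d \in \{2,3,4,8,24\}$ yields NP-hardness for every $d' = d+1 \in \{3,4,5,9,25\}$, as claimed.

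Because every step is immediate once Theorem~\ref{connection} is in hand, I expect no genuine analytic obstacle here; the content of the result really lives in that biconditional and in the hardness inputs. The only points that need care are bookkeeping ones: verifying that the biconditional is applied in the correct direction so that the reduction is genuinely instance-preserving, and confirming that the five source dimensions furnished by Theorems~\ref{2-hard}, \ref{3-hard} and \ref{nphard} shift under $d \mapsto d+1$ to exactly the five claimed target dimensions with no omissions.
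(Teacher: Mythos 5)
Your proposal is correct and follows exactly the argument the paper intends: the corollary is obtained by composing the polynomial-time map $G \mapsto G \oplus K_2$ with the biconditional of Theorem~\ref{connection}, thereby transferring the NP-hardness of unit-ball recognition in dimensions $2,3,4,8,24$ (Theorems~\ref{2-hard}, \ref{3-hard}, \ref{nphard}) to general ball recognition in dimensions $3,4,5,9,25$. Your additional bookkeeping (polynomiality of the construction and the direction of the biconditional) is exactly the verification the paper leaves implicit.
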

 
Not much is known about $c^{*}(n,d)$, for $d\geq 4$. However, for $d=3$, an upper bound was found by Kuperberg and Schramm \cite{KuSc}. (Also see \cite{HaSzUj} for some elementary results on forbidden subgraphs of contact graphs of non-congruent sphere packings in ${\mathbb{E}}^{3}$.) Define the average kissing number $k^{*}_{\rm av}(d)$ in dimension $d$ as the supremum of average vertex degrees among all contact graphs of finite sphere packings in ${\mathbb{E}}^{d}$. In a packing of three dimensional congruent spheres, a sphere can touch at the most 12 others \cite{ScWa}. Thus a three dimensional ball $B$ cannot touch more than 12 other balls at least as large as $B$. It follows that $k^{*}_{\rm av}(3)\leq 2k(3) = 24$. In \cite{KuSc}, this was improved to $12.566 \approx 666/53 \leq k^{*}_{\rm av}(3)< 8 + 4\sqrt{3} \approx 14.928$. In the language of contact numbers, the Kuperberg--Schramm bound translates into the following.   

\begin{theorem}\label{kuperberg-schramm}
$c^{*}(n,3) < (4+2 \sqrt{3})n \approx 7.464n .$
\end{theorem}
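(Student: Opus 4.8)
The plan is to deduce the stated bound directly from the Kuperberg--Schramm estimate on the average kissing number recorded just above, namely $k^{*}_{\rm av}(3) < 8 + 4\sqrt{3}$. First I would recall that $c^{*}(n,3)$ is, by definition, the largest number of edges in a contact graph $G$ of $n$ (possibly non-congruent) balls in $\mathbb{E}^{3}$. Fix a packing whose contact graph $G$ attains this maximum, so that $G$ has $n$ vertices and exactly $c^{*}(n,3)$ edges. By the handshake lemma the sum of the vertex degrees of $G$ equals $2c^{*}(n,3)$, so the average degree of $G$ is precisely $\frac{2c^{*}(n,3)}{n}$.

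Next I would invoke the definition of the average kissing number $k^{*}_{\rm av}(3)$ as the supremum of average vertex degrees over all contact graphs of finite sphere packings in $\mathbb{E}^{3}$. Since $G$ is one such contact graph, its average degree is at most $k^{*}_{\rm av}(3)$; combining this with the Kuperberg--Schramm bound yields
\[
\frac{2c^{*}(n,3)}{n} \le k^{*}_{\rm av}(3) < 8 + 4\sqrt{3},
\]
and multiplying through by $\frac{n}{2}$ gives $c^{*}(n,3) < (4 + 2\sqrt{3})\,n$, as claimed.

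The genuine mathematical content---and the step I expect to be the real obstacle were one to prove the theorem from scratch rather than cite \cite{KuSc}---is the upper bound $k^{*}_{\rm av}(3) < 8 + 4\sqrt{3}$ itself. The elementary orientation argument (orient each contact edge toward the larger of the two balls; since at most $k(3)=12$ balls at least as large as a given ball can touch it, the in-degree of each vertex is at most $12$, forcing $k^{*}_{\rm av}(3) \le 24$) is far too weak. To reach $8 + 4\sqrt{3}$ one must account quantitatively for the solid angles that neighbouring balls of varying radii subtend at the center of a given ball: a large neighbour both contributes a contact and blocks out a large angular region, so there is a trade-off that a careful charging or weighting scheme across all contacts must exploit, with the constant $8 + 4\sqrt{3}$ emerging from the extremal local configuration. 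Reproducing this optimization is exactly the delicate part; since it is already available in \cite{KuSc}, I would simply cite it and present the short reduction above as the proof.
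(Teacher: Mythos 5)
Your proof is correct and takes essentially the same approach as the paper: there, too, the theorem is obtained by citing the Kuperberg--Schramm bound $k^{*}_{\rm av}(3) < 8 + 4\sqrt{3}$ from \cite{KuSc} and translating it into an edge count, which is precisely your handshake-lemma reduction. The only difference is that you spell out explicitly the averaging step the paper compresses into the phrase ``translates into the following.''
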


The method of Kuperberg and Schramm relies heavily on the geometry of 3-dimensional space. As a result it seems difficult to generalize it to higher dimensions. We close this section with the following open question. 

\begin{problem}
Find upper and lower bounds on $c^{*}(n, d)$ in the spirit of Kuperberg--Schramm bounds on $c^{*}(n, 3)$. 
\end{problem}

\section*{Acknowledgments}
 The first author is partially supported by a Natural Sciences and Engineering Research Council of Canada Discovery Grant. The second author is supported by a Vanier Canada Graduate Scholarship (NSERC), an Izaak Walton Killam Memorial Scholarship and Alberta Innovates Technology Futures (AITF). The authors
would like to thank the anonymous referee for careful reading and an interesting reference.

\small

\bigskip

\normalsize
\noindent K\'aroly Bezdek \\
\small{Department of Mathematics and Statistics, University of Calgary, Canada}\\
\small{Department of Mathematics, University of Pannonia, Veszpr\'em, Hungary\\
\small{\texttt{E-mail:bezdek@math.ucalgary.ca}}

\normalsize

\bigskip
\noindent and
\bigskip

\noindent Muhammad A. Khan \\
 \small{Department of Mathematics and Statistics, University of Calgary, Canada}\\
 \small{\texttt{E-mail:muhammkh@ucalgary.ca}}

\end{document}